\newtheorem{thm}{Theorem}[section]
\newtheorem*{fact1}{Fact 1}
\newtheorem*{fact2}{Fact 2}
\newtheorem*{fact3}{Fact 3}
\newtheorem*{fixed point criterion}{Fixed point criterion}
\newtheorem{cor}[thm]{Corollary}
\newtheorem{lem}[thm]{Lemma}
\newtheorem{prop}[thm]{Proposition}
\newtheorem{conj}[thm]{Conjecture}
\theoremstyle{definition}
\newtheorem{defn}[thm]{Definition}
\newtheorem{exam}[thm]{Example}
\theoremstyle{remark}
\newtheorem{rem}[thm]{Remark}
\numberwithin{equation}{section}
\newcommand{\Z}{\mathbb Z}
\newcommand{\fix}{\mathrm{Fix}\,}
\newcommand{\ind}{\mathrm{ind}}
\newcommand{\chr}{\mathrm{chr}}
\newcommand{\ichr}{\mathrm{ichr}}
\newcommand{\rk}{\mathrm{rk}}
\newcommand{\E}{\mathrm{E}}
\newcommand{\V}{\mathrm{V}}
\newcommand{\aut}{\mathrm{Aut}}
\newcommand{\edo}{\mathrm{End}}
\newcommand{\inj}{\mathrm{Inj}}
\newcommand{\inn}{\mathrm{Inn}}
\newcommand{\stab}{\mathrm{Stab}}
\newcommand{\fpc}{\mathrm{Fpc}}
\newcommand{\tr}{\mathrm{tr}}
\newcommand{\F}{\mathbf{F}}      
\newcommand{\B}{\mathcal{B}}
\begin{document}

\title[Fixed point indices and fixed words at infinity]{Fixed point indices and fixed words at infinity\\ of selfmaps of graphs}
\author{Qiang Zhang, Xuezhi Zhao}
\address{School of Mathematics and Statistics, Xi'an Jiaotong University,
Xi'an 710049, China}
\email{zhangq.math@mail.xjtu.edu.cn}
\address{Department of Mathematics, Capital Normal University, Beijing 100048, China}
\email{zhaoxve@mail.cnu.edu.cn}

\thanks{The authors are partially supported by NSFC (Nos. 11771345, 11961131004 and 11971389) and Capacity Building for Sci-Tech Innovation-Fundamental Scientific Research Funds.}

\subjclass[2010]{55M20, 55N10, 32Q45}

\keywords{Index, attracting fixed point, fixed subgroups, graph selfmap, free group}

\date{\today}%

\begin{abstract}
Indices of fixed point classes play a central role in Nielsen fixed point theory. Jiang-Wang-Zhang proved that for selfmaps of graphs and surfaces, the index of any fixed point class has an upper bound called its characteristic.

In this paper, we study the difference between the index and the characteristic for selfmaps of graphs. First, for free groups, we extend attracting fixed words at infinity of automorphisms into that of injective endomorphisms. Then, by using relative train track technique, we show that the difference mentioned above is quite likely to be the number of equivalence classes of attracting fixed words of the endomorphism induced on the fundamental group.
Since both of attracting fixed words and the existed characteristic are totally determined by endomorphisms themselves, we give a new algebraic approach to estimate indices of fixed point classes of graph selfmaps.

As consequence, we obtain an upper bound for attracting fixed words of injective endomorphisms of free groups, generalizing the one for automorphisms due to Gaboriau-Jaeger-Levitt-Lustig. Furthermore, we give a simple approach to roughly detecting whether fixed words exist or not.
\end{abstract}

\maketitle


\section{Introduction}\label{Sect. introduction}

Fixed point theory studies the fixed points of a selfmap $f$ of a space $X$. Nielsen fixed point theory, in particular, is concerned with the properties of the fixed point set
$$\fix f:=\{x\in X|f(x)=x\},$$
that are invariant under homotopy of the selfmap $f$ (see \cite{fp1} for an introduction to Nielsen fixed point theory).

The fixed point set $\fix f$ splits into a disjoint union of \emph{fixed point classes}: two fixed
points $x$ and $x'$ are said to be in the same class if and only if they can be joined by a \emph{Nielsen path} which is
a path homotopic (relative to endpoints) to its own $f$-image. Let $\fpc(f)$ denote the set of all the fixed point classes of $f$. For each fixed point class $\F\in \fpc(f)$, a homotopy invariant \emph{index} $\ind(f,\F)\in \Z$ is well-defined. A fixed point class is \emph{essential} if its index is non-zero. For alternative definitions of fixed point class by covering spaces, see Section \ref{defn Fpc by covering}.

In the paper \cite{JWZ}, B. Jiang, S. Wang and Q. Zhang defined a new homotopy invariant \emph{rank} $\rk(f,\F)\in \mathbb{N}$ for a fixed point class $\F$ of $f$. For an endomorphism $\phi:G \rightarrow G$ of a group $G$, its \emph{fixed subgroup} refers to the subgroup
$$\fix(\phi):=\{g\in G|\phi(g)=g\}\leq G.$$
The \emph{stabilizer} of a fixed point $x\in \F$ is the subgroup
$\stab(f,x):=\fix(f_{\pi})\subset\pi_1(X,x)$, where $f_{\pi}: \pi_1(X,x)\rightarrow \pi_1(X,x)$ is the induced endomorphism.
Since it is independent of the choice of $x\in \F$, up to isomorphism, the \emph{stabilizer} of a fixed point class $\F$ is defined as
$\stab(f,\F):=\stab(f,x)$, for any $x\in \F$. The \emph{rank} of $\F$ is defined as
$$\rk(f,\F):=\rk\, \stab(f,\F),$$
where the rank of a group is the minimal number of generators.

For a selfmap $f$ of a surface or a graph $X$, they defined the \emph{characteristic} of a fixed point class $\F$ of $f$ as
$$\chr(f,\F):=1-\rk(f,\F),$$
with the only exception that $\chr(f,\F):=\chi(X)=2-\rk(f,\F)$ when $X$ is a closed surface and $\stab(f,\F)=\pi_1(X)$.
What is more, some relations between the characteristic and the index of fixed point classes of graphs and surfaces are proved.

\begin{thm}[Jiang-Wang-Zhang, \cite{JWZ}]\label{JWZ main theorem}
Suppose $X$ is either a connected finite graph or a connected
compact hyperbolic surface, and $f: X\rightarrow X$ is a selfmap.
Then

$\mathrm{(A)}$ $\ind(f, \F)\leq \chr(f, \F)$ for every fixed point class
$\F$ of $f$;

$\mathrm{(B)}$ when $X$ is not a tree,
$$\sum_{\ind(f, \F)+\chr(f, \F)<0}\{\ind(f, \F)+\chr(f, \F)\}\geq 2\chi(X),$$
where the sum is taken over all fixed point classes $\F$ with
$\ind(f, \F)+\chr(f, \F)<0$.
\end{thm}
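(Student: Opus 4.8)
The plan is to reduce both statements to a single combinatorial index formula on a well-chosen representative, and then derive (A) class by class and (B) by a global count. First I would replace $f$ by an efficient representative: by the Bestvina--Handel relative train track theorem (applied to $f_\pi$, which is the main case where it bites), $f$ is homotopic to a map $g\colon\Gamma\to\Gamma$ on a finite graph $\Gamma\simeq X$ that sends vertices to vertices, edges to reduced edge-paths, and displays no cancellation on the top strata. Because $\ind(f,\F)$, $\rk(f,\F)$ and hence $\chr(f,\F)$ are homotopy invariants---this is exactly how $\rk$ and $\chr$ are defined---it is enough to prove the two inequalities for $g$. The compact hyperbolic surface case runs in parallel with a geodesic/efficient representative and the boundary analysis below replaced by its surface-group counterpart; I will concentrate on the graph case, the focus of this paper.

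The core step is a local-to-global index formula. For each fixed point class $\F$ of $g$ I would pass to the universal cover, a tree $T$, and pick the lift $\tilde g\colon T\to T$ whose fixed points project into $\F$; the deck subgroup preserving $\tilde g$ is precisely the stabilizer $H=\stab(f,\F)\le\pi_1(X)$. The fixed set $\fix\tilde g$ is an $H$-invariant subtree on which $H$ acts freely, and $\tilde g$ fixes in addition a finite $H$-set of \emph{attracting} ends of $T$ (the attracting fixed words). Using the elementary local index $\ind(g,p)=1-\#\{d:\,Dg(d)=d\}$ at an isolated fixed vertex of the expanding map $g$, where $d$ runs over directions at $p$, and organizing the remaining fixed points through the Nielsen-path structure, I expect the index to split as
\begin{equation*}
\ind(g,\F)=\chi(\fix\tilde g/H)-a(\F)=\bigl(1-\rk H\bigr)-a(\F),
\end{equation*}
where $a(\F)\ge 0$ counts the $H$-orbits of attracting ends. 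Since $\chr(f,\F)=1-\rk H$ for graphs, this reads $\ind(f,\F)=\chr(f,\F)-a(\F)$, which gives statement (A) at once.

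For statement (B) I would feed the same formula into a global estimate. Writing $\ind+\chr=2(1-\rk H)-a$, a class contributes a negative summand exactly when $2\rk H+a>2$, i.e. when its fixed subgroup together with its attracting ends is large. The desired bound $\sum(\ind+\chr)\ge 2\chi(X)$ is then equivalent, via $\chi(X)=1-\rk\pi_1(X)$, to
\begin{equation*}
\sum_{\F}\Bigl(\rk H_{\F}-1+\tfrac12\,a(\F)\Bigr)^{+}\le -\chi(X).
\end{equation*}
This is a Gaboriau--Jaeger--Levitt--Lustig-type inequality: distinct essential classes correspond to lifts with disjoint invariant subtrees and disjoint sets of attracting ends, so their combined ``rank plus half-ends'' complexity cannot exceed that of the ambient free group $\pi_1(X)$. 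I would prove it by the GJLL boundary-dynamics argument---counting fixed, attracting and repelling ends of $\partial T$ under the boundary extension and bounding their total against $\rk\pi_1(X)$---adapted to the train track map so that the disjointness across classes is visible directly from the strata.

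The main obstacle is exactly this global disjointness behind (B). The per-class identity in (A) is a fairly direct local computation once an efficient representative is in hand, but showing that the invariant subtrees and attracting ends of \emph{different} fixed point classes never overcharge the homology of $\Gamma$---so that nothing is double-counted in the sum---requires the full relative train track machinery and the GJLL boundary estimate. The closed-surface case adds only the bookkeeping of the exceptional clause $\chr=\chi(X)$ when $\stab(f,\F)=\pi_1(X)$, with no new conceptual difficulty beyond the corresponding estimate for surface-group automorphisms.
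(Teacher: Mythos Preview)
Note first that this theorem is not proved in the present paper at all: it is quoted from \cite{JWZ} as background, so there is no proof here to compare against. That said, your proposal has a genuine gap.

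The heart of your argument for (A) is the asserted identity
\[
\ind(g,\F)=\chr(g,\F)-a(\F),
\]
which you call ``a fairly direct local computation once an efficient representative is in hand.'' This is exactly Conjecture~\ref{conj} of the present paper, and it is \emph{open} in general. What the paper does prove (Theorem~\ref{main thm 1}) is only the inequality $\ind(\F)\le\chr(\F)-a(\F)$, and even that is not a single local computation: it requires an induction along the relative train track filtration, together with the key Lemma~\ref{key lemma: expending edges are infinite words} and Proposition~\ref{key prop. on ichr}, which track precisely how each expanding direction in $\Delta(\F)$ contributes one new equivalence class of attracting fixed words, with the sole correction coming from the unique indivisible Nielsen path. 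The equality you assume is established only when $\chi(X)\ge -1$ (Theorem~\ref{main thm for figure 8}); for general $X$ the obstruction is that for an \emph{empty} fixed point class one only knows $0\le\ichr(\F)\le 1$ (Proposition~\ref{empty fpc has ichr 1}), not $\ichr(\F)=0$. So your ``local index formula'' is a conjecture, not a computation; for (A) you would need to back off to the inequality and supply the inductive argument.

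Your route to (B) has a related circularity. You want to derive (B) from a GJLL-type bound
\[
\sum_{\F}\bigl(\rk H_{\F}-1+\tfrac12\,a(\F)\bigr)^{+}\le -\chi(X).
\]
But in this paper the logic runs the other way: this bound is Corollary~\ref{main cor}, obtained \emph{from} Theorem~\ref{JWZ main theorem}(B) together with Theorem~\ref{main thm 1}, and the injective-endomorphism extension of GJLL (Theorem~\ref{main thm 2}, Theorem~\ref{stronger version of main thm 2}) likewise rests on it. The original GJLL argument via $\mathbb{R}$-trees covers only automorphisms; for the general selfmaps in Theorem~\ref{JWZ main theorem}---which need not even be $\pi_1$-injective---you would have to supply an independent proof of the bound, and also the reduction from arbitrary selfmaps to $\pi_1$-injective ones (handled in \cite{JWZ} via mutants), neither of which your outline provides.
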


An analogue on $3$-manifolds can be found in \cite{Z}.

In this paper, we are primarily interested in a $\pi_1$-injective selfmap $f: X\to X$ of a graph $X$, i.e., $f$ induces an injective endomorphism $f_{\pi}$ of the fundamental group $\pi_1(X)$. In this setting, we will introduce a new homotopy invariant $a(f, \F)\in\mathbb{N}$ (see Definition \ref{def. a(f,F)}) for each fixed point class $\F$ of $f$. Thanks to this new invariant, we will improve the inequality $\mathrm{(A)}$ of Theorem \ref{JWZ main theorem} into an inequality (equality when $\chi(X)\geq -1$) among $\ind(f, \F)$, $\rk(f, \F)$ and $a(f,\F)$. For convenience, we define the \emph{improved characteristic} of a fixed point class $\F$ of $f$ as $\ichr(f,\F):=\chr(f,\F)-a(f,\F)$, namely,
$$\ichr(f,\F):=1-\rk(f,\F)-a(f,\F).$$
Note that $\ichr(f,\F)\leq 1$ and only depends on the induced endomorphism $f_{\pi}$ of $\pi_1(X)$.

For brevity, we will write $\ind(\F), \rk(\F), a(\F), \chr(\F)$ and $\ichr(\F)$ if no confusion exists for the selfmap $f$ in the context. In Nielsen fixed point theory, a fixed point class $\F$ is allowed to be empty. In that case the above definitions of $\rk(\F), a(\F)$ and $\ichr(\F)$ do not make sense. Alternative approaches using covering spaces and paths, will be given in Section \ref{Sect. Improved characteristics}.

The main results of this paper are the following.

\begin{thm}\label{main thm 1}
Let $X$ be a connected finite graph and $f: X\to X$ be a $\pi_1$-injective selfmap. Then for every fixed point class $\F$ of $f$, we have
$$\ind(\F)\leq\ichr(\F).$$
\end{thm}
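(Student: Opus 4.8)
The plan is to prove the inequality by passing to a good topological representative of $f$ and then computing $\ind(\F)$ as a local, combinatorial quantity read off from a fixed subgraph. Since the index is a homotopy invariant and $\rk(\F)$, $a(\F)$, hence $\ichr(\F)$, depend only on the induced endomorphism $f_\pi$, I am free to replace $f$ by any homotopic map inducing the same $f_\pi$. The first step is therefore to homotope $f$ to a relative train track representative $g$ that is in addition efficient at its fixed points: I want the set of fixed points lying in the class $\F$ to be a subgraph $Z\subset X$ on which $g$ restricts to the identity, together with finitely many isolated fixed points in the interiors of edges, and I want the ``derivative'' map $Dg$ on the directions (germs of oriented edges) at each fixed vertex to be well defined. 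Realizing the class by such a $Z$ with $\pi_1(Z)\cong\stab(f,\F)=\fix(f_\pi)$, so that $\chi(Z)=1-\rk(\F)=\chr(\F)$, is the content of a first key lemma; it is where the injectivity of $f_\pi$ and the no-cancellation property of train tracks are used.

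The second step is a local index formula. With $g$ in the above form I would compute $\ind(\F)=\sum_{x}\ind(g,x)$ as a sum of local indices and show it equals $\chi(Z)$ minus the number of directions pointing out of $Z$ along which $g$ is expanding (the ``repelling outgoing directions''), minus a nonnegative contribution from the isolated interior fixed points. The anchor computations are one-dimensional: an expanding fixed point on an edge has index $-1$, an attracting one has index $+1$, and a fixed vertex that is a source into $k$ directions has index $1-k$; these assemble, by a Poincar\'e--Hopf-type count on $Z$, into $\ind(\F)=\chi(Z)-R-(\text{extra})$, where $R$ is the number of repelling outgoing directions and $(\text{extra})\ge 0$.

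The third step is to identify the repelling outgoing directions with attracting fixed words at infinity. Along each such direction $d$, with $Dg(d)=d$ and $g$ expanding, iterating $g$ produces an infinite reduced fixed ray whose end is an attracting fixed point of the boundary dynamics; this is exactly an attracting fixed word of $f_\pi$, and distinct directions, counted in $Z$ (equivalently modulo $\stab(f,\F)$), give inequivalent words. Thus $R+(\text{extra})$ is bounded below by $a(\F)$, and combining with Step 2 and $\chi(Z)=\chr(\F)$ yields
$$\ind(\F)=\chr(\F)-R-(\text{extra})\le\chr(\F)-a(\F)=\ichr(\F).$$
The nonnegative $(\text{extra})$ terms should vanish, forcing equality, exactly in the low-complexity range $\chi(X)\ge-1$, as asserted.

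The step I expect to be the main obstacle is setting up attracting fixed words, and the accompanying boundary dynamics, for an injective \emph{endomorphism} rather than an automorphism. For automorphisms one has the action on $\partial\pi_1(X)$ and the Gaboriau-Jaeger-Levitt-Lustig picture of attracting fixed points at infinity; for a proper injective endomorphism $f_\pi$ is not surjective, there is no honest covering-space or boundary-homeomorphism description, and one must instead build the fixed rays directly on the relative train track map (or on a mapping-telescope/limit model) and prove that they are genuinely attracting and that the local index formula survives. Verifying the precise bound $R+(\text{extra})\ge a(\F)$, i.e.\ that every equivalence class of attracting fixed word is detected by the fixed subgraph with no double counting, is the delicate point; I expect the no-cancellation property of train tracks to be what makes each fixed ray well defined and each attracting word distinct.
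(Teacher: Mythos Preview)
Your proposal has the right ingredients --- a relative train track representative, a local index count, and the identification of expanding directions with attracting fixed words --- but Step~1 contains a genuine gap and Step~3 runs the key implication in the wrong direction.

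\textbf{The fixed subgraph $Z$ does not exist as described.} In an RTT representative $\beta:(Z,Z_0)\to(Z,Z_0)$ as in Theorem~\ref{Thm BH}, every fixed point is a vertex, so the set on which $\beta$ is literally the identity is discrete and has trivial $\pi_1$. The stabilizer $\stab(f,\F)$ is generated by \emph{Nielsen paths}, and an indivisible Nielsen path $p=p_1\bar p_2$ in the top stratum satisfies $\beta(p_i)=p_it$ with $t$ nontrivial: it is homotopic rel endpoints to its image but is \emph{not} fixed pointwise, and you cannot make it so without destroying the expansion you rely on in Step~3. Thus there is no embedded subgraph $Z$ with both $g|_Z=\id$ and $\pi_1(Z)\cong\stab(f,\F)$; your ``first key lemma'' fails.

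\textbf{Step~3 argues injectivity where you need surjectivity, and injectivity is false anyway.} To obtain $\ind(\F)=\chr(\F)-R-(\text{extra})\le\chr(\F)-a(\F)$ you need $R+(\text{extra})\ge a(\F)$, i.e.\ that \emph{every} equivalence class of attracting fixed words is realized by some repelling direction. What you assert instead (``distinct directions give inequivalent words'') is the reverse implication, and Lemma~\ref{key lemma: expending edges are infinite words}(2) shows it is false: the two initial edges of an indivisible Nielsen path are distinct elements of $\Delta(\F)$ yielding the \emph{same} class $\mathscr W_{e_1}=\mathscr W_{e_2}$. The saving grace is that this same Nielsen path simultaneously increases $\rk(\F)$ by~$1$ (or merges two $\beta_0$-classes), so the deficit is absorbed --- but only if $\rk$, $a$, and $\delta$ are tracked together.

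This is precisely what the paper does. Rather than constructing a single fixed subgraph, it inducts through the RTT filtration: Proposition~\ref{key prop. on ichr} computes how each of $\rk(\F)$ and $a(\F)$ changes from $\beta_0$ to $\beta$ in the three Nielsen-path cases, and Corollary~\ref{Cor. ichr for RTT} packages this as $\ichr(\beta,\F)=\ichr(\beta_0,\F)-\delta(\F)$, exactly matching equation~(\ref{equa. additivity of index}) for the index. The surjectivity statement you actually need is Lemma~\ref{key lemma: expending edges are infinite words}(3). The sole source of the inequality is then the base case of \emph{empty} fixed point classes, handled by Proposition~\ref{empty fpc has ichr 1} (if $\ichr(\F)<0$ the lift must have a fixed point), not by a nonnegative ``extra'' term. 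Finally, the obstacle you flagged --- extending boundary dynamics to injective endomorphisms --- is real but comparatively minor; it is dispatched in Section~\ref{Sect. Attracting fixed pts of free gps} via the quasiconvex embedding $\partial(\phi(F))\hookrightarrow\partial F$.
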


When the Euler characteristic $\chi(X)\geq 0$, the equality $\ind(\F)=\ichr(\F)$ holds immediately, see Example \ref{exam: circle}. When $\chi(X)=-1$, we have

\begin{thm}\label{main thm for figure 8}
If $X$ is a connected finite graph with Euler characteristic $\chi(X)=-1$ and $f: X\to X$ is a $\pi_1$-injective selfmap, then for every essential fixed point class $\F$ of $f$, we have
$$\ind(\F)=\ichr(\F),$$
and for every inessential fixed point class $\F$, we have $0=\ind(\F)\leq \ichr(\F)\leq 1.$
\end{thm}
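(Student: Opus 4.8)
The plan is to prove Theorem~\ref{main thm for figure 8} by combining the general inequality of Theorem~\ref{main thm 1} with a refined analysis specific to the case $\chi(X)=-1$. Since $\chi(X)=-1$, the fundamental group $\pi_1(X)$ is a free group of rank $2$, and after collapsing a maximal tree we may assume $X$ is either the wedge of two circles (the figure-eight) or the theta-graph; in either case $f$ is $\pi_1$-injective, so $f_\pi$ is an injective endomorphism of $F_2$. The inequality $\ind(\F)\leq\ichr(\F)$ is already in hand from Theorem~\ref{main thm 1}, so the entire content is to establish the reverse inequality $\ind(\F)\geq\ichr(\F)$ for essential classes, and to control the range $0\leq\ichr(\F)\leq 1$ in the inessential case.

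Let me think about the structure here. Since $\ichr(\F)=1-\rk(\F)-a(\F)\leq 1$ always, the upper bound $\ichr(\F)\leq 1$ in the inessential case is automatic from the definition. For an inessential class we have $\ind(\F)=0$ by definition, so the only thing to verify is the lower bound $\ichr(\F)\geq 0$, i.e.\ $\rk(\F)+a(\F)\leq 1$. For essential classes the target is the exact equality. **First I would** invoke the relative train track machinery promised in the introduction: by the Bestvina--Handel algorithm I would replace $f$ up to homotopy by a map in relative train track form, which does not change the homotopy invariants $\ind(\F)$, $\rk(\F)$, and $a(\F)$. On a rank-$2$ graph the stratified structure is extremely constrained --- there are at most two strata and the transition matrices are tiny --- so I would proceed by enumerating the possible local behaviors of $f$ at a fixed point class: whether the class is a single fixed vertex or fixed arc, whether the associated fixed subgroup $\stab(f,\F)$ is trivial, infinite cyclic, or all of $F_2$, and how many attracting directions (counted by $a(\F)$) emanate from it.

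The key computational step is a \emph{case analysis driven by the rank of the stabilizer}. When $\rk(\F)=0$ the stabilizer is trivial, the class is a ``semigroup'' type fixed point, and $\ichr(\F)=1-a(\F)$; here I would show that the index is computed directly from the attracting-expanding local model, giving $\ind(\F)=1-a(\F)$ exactly, which matches. When $\rk(\F)=1$ the stabilizer is infinite cyclic, $\ichr(\F)=-a(\F)$, and the class sits along an invariant circle; the index contribution of such a class in a train track map is governed by the standard formula relating the index to the number of attracting directions at the endpoints of the fixed arc, again yielding $\ind(\F)=-a(\F)$. The case $\rk(\F)=2$ forces $\stab(f,\F)=\pi_1(X)$, which for an injective but non-surjective endomorphism of a rank-$2$ free group is highly restrictive --- essentially $f$ is homotopic to a homeomorphism and the class is the whole graph --- and here I would check $\ind(\F)=\ichr(\F)=-1-a(\F)$ separately, with $a(\F)=0$.

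**The hard part will be** pinning down the contribution of $a(\F)$, the number of equivalence classes of attracting fixed words at infinity, to the index in a way that is uniform across the train-track strata, and in particular handling the interaction between the \emph{geometric} attracting directions that are visible on the graph and the attracting fixed words that live at infinity but may not correspond to honest fixed edges. I expect the crux to be a local index formula of the shape $\ind(\F)=1-\rk(\F)-a(\F)$ derived by summing index contributions over the fixed vertices and invariant arcs of the train-track representative, where each attracting direction and each attracting fixed word at infinity is shown to decrement the index by exactly one. Because $\chi(X)=-1$ caps the total available index budget (via part (B) of Theorem~\ref{JWZ main theorem}, which gives $\sum\{\ind+\chr\}\geq 2\chi(X)=-2$), the inequality of Theorem~\ref{main thm 1} cannot have slack beyond what $a(\F)$ already accounts for, and a counting argument comparing the two bounds should force equality on the essential classes. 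I would close by verifying that every inessential class has at most one attracting fixed word together with trivial stabilizer, so that $\rk(\F)+a(\F)\leq 1$ and hence $0\leq\ichr(\F)\leq 1$, completing both assertions.
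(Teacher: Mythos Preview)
Your reduction to a relative train track representative is correct, and the inessential case is actually immediate: Theorem~\ref{main thm 1} already gives $0=\ind(\F)\leq\ichr(\F)$, and $\ichr(\F)\leq 1$ is definitional, so no separate verification that $\rk(\F)+a(\F)\leq 1$ is needed. The essential case, however, has a genuine gap. Your plan is a case analysis on $\rk(\F)\in\{0,1,2\}$ together with a ``local index formula'' $\ind(\F)=1-\rk(\F)-a(\F)$ at each fixed vertex or arc, but that formula is exactly the statement to be proved, and you have not supplied a mechanism that ties the \emph{geometric} count of $D\beta$-invariant edge germs at a fixed point to the \emph{algebraic} count $a(\F)$ of equivalence classes of attracting fixed words at infinity. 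That correspondence is the substantive content of the paper (Lemma~\ref{key lemma: expending edges are infinite words}, Proposition~\ref{key prop. on ichr}, Corollary~\ref{Cor. ichr for RTT}) and cannot be assumed. Your closing ``budget'' argument also does not work as written: inequality (B) of Theorem~\ref{JWZ main theorem} bounds $\sum(\ind+\chr)$, not $\sum(\ind-\ichr)$, so it does not force the slack in Theorem~\ref{main thm 1} to vanish.

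The paper's argument is not a rank case analysis but an \emph{induction on Euler characteristic through the RTT filtration}. For the RTT map $\beta:(Z,Z_0)\to(Z,Z_0)$ with $\chi(Z)=-1$, every component $Z_i$ of the proper invariant subgraph $Z_0$ has $\chi(Z_i)\in\{0,1\}$. For $\chi(Z_i)=1$ (a point) one has $\ind=\ichr=1$ trivially; for $\chi(Z_i)=0$ (a circle) Example~\ref{exam: circle} gives $\ind=\ichr$ exactly. The inductive step is the pair of parallel identities
\[
\ind(\beta,\F)=\ind(\beta_0,\F)-\delta(\F),\qquad \ichr(\beta,\F)=\ichr(\beta_0,\F)-\delta(\F),
\]
the first from additivity of index (equation~\eqref{equa. additivity of index}) and the second from Corollary~\ref{Cor. ichr for RTT}. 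Equality on the bottom stratum therefore propagates to $\beta$. What makes $\chi(X)=-1$ special is precisely that the bottom stratum already lives in the range $\chi\geq 0$ where equality is known; your rank-based enumeration does not exploit this, and without the $\delta(\F)$ bookkeeping linking invariant directions to attracting words it cannot close.
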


\begin{conj}\label{conj}
The equality $\ind(\F)=\ichr(\F)$ always holds in Theorem \ref{main thm 1}.
\end{conj}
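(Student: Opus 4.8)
The plan is to upgrade the inequality of Theorem~\ref{main thm 1} to an equality by establishing the reverse inequality $\ind(\F)\geq\ichr(\F)$, that is $\ind(\F)\geq 1-\rk(\F)-a(\F)$, for every fixed point class of a $\pi_1$-injective graph selfmap, including the outstanding range $\chi(X)\leq -2$. Since $\ind(\F)$, $\rk(\F)$ and $a(\F)$ are all homotopy invariants, I would first replace $f$ by a well-behaved representative in its homotopy class: using the relative train track technology (extended to injective endomorphisms, as set up earlier in the paper), one may assume $f$ is a relative train track map whose fixed points are concentrated at vertices and in controlled fixed arcs, and whose local behaviour is governed by the derivative map on directions.

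Next I would localize the index. Passing to the universal cover $\tilde{X}$ and choosing the lift $\tilde f$ determined by $\F$, the fixed subgroup $\stab(f,\F)=\fix(f_\pi)$ acts on the fixed point set $\fix\tilde f\subset\tilde X$ and on the set of fixed points at infinity $\fix(\partial\tilde f)\subset\partial\tilde X$. The local-index formula for train track maps computes $\ind(\F)$ from this data: each fixed vertex $v$ with $k_v$ expanding fixed directions carries local index $1-k_v$, and each such expanding direction issues an eigenray limiting to an attracting fixed word at infinity. The goal is then a bookkeeping identity: after dividing out by the $\stab(f,\F)$-action, the alternating count of fixed vertices and fixed arcs must contribute exactly the Euler-characteristic term $1-\rk(\F)$, while the expanding directions surviving modulo $\stab(f,\F)$ must contribute exactly $a(\F)$. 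This is the fixed-point-class-local, endomorphism version of the Gaboriau--Jaeger--Levitt--Lustig index formula.

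The main obstacle is the reverse inequality in the genuinely higher-complexity range $\chi(X)\leq -2$, where a relative train track representative necessarily has several strata, and two difficulties must be controlled simultaneously. First, the matching between the coarse boundary dynamics (which fixed words at infinity are genuinely \emph{attracting}, hence counted by $a(\F)$) and the fine combinatorial dynamics (which fixed directions are genuinely expanding) can fail through ``indifferent'' or parabolic fixed directions produced by non-exponentially-growing strata; in the $\chi(X)=-1$ case of Theorem~\ref{main thm for figure 8} these degenerate configurations are few enough to enumerate directly, but in general they require a uniform argument. Second, within a single fixed point class several fixed vertices may be joined by indivisible Nielsen paths, and one must show that the interaction of their local indices with the rank of $\stab(f,\F)$ assembles into $1-\rk(\F)$ without any loss of index. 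I expect this second point---controlling the structure of the fixed subgraph in the cover and its quotient by $\stab(f,\F)$ across strata of mixed growth type---to be the crux, and the reason the statement is offered as a conjecture rather than a theorem.

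Finally, I would try to organize the argument by induction on the number of strata (equivalently on $-\chi(X)$), peeling off the top stratum of the relative train track. If it is exponentially growing, its expansion manufactures precisely the attracting fixed words accounting for its index defect, and one reduces to the lower filtration; if it is non-exponentially growing, the delicate case above must be resolved before descending. Matching the inductive hypothesis to the additivity of $\ind$, $\rk$ and $a$ over the reduction step would then complete the equality, and hence the conjecture.
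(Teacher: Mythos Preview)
The statement is a \emph{conjecture} in the paper, so there is no proof to compare against; the paper only offers supporting examples (Section~\ref{sect. examples}) and a reduction. Your plan is a reasonable research outline, but it misidentifies where the difficulty lies.

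You frame the obstacle as controlling, across many strata, the match between expanding directions and attracting fixed words, and the assembly of local indices along indivisible Nielsen paths. But the paper's Proposition~\ref{key prop. on ichr} and Corollary~\ref{Cor. ichr for RTT} already do this bookkeeping \emph{exactly}, not just as an inequality: for every \emph{nonempty} fixed point class $\F$ of an RTT map one has
\[
\ind(\beta,\F)=\ind(\beta_0,\F)-\delta(\F)\quad\text{and}\quad \ichr(\beta,\F)=\ichr(\beta_0,\F)-\delta(\F),
\]
so equality $\ind=\ichr$ propagates perfectly through the stratum induction down to circles and points, where it holds by Example~\ref{exam: circle}. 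The Nielsen-path accounting and the ``which directions are genuinely attracting'' question are precisely what Lemma~\ref{key lemma: expending edges are infinite words} and Lemma~\ref{lem. RTT typer 2} settle. Thus your induction on strata already closes for nonempty classes with no loss whatsoever; there is nothing left to prove there.

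The entire content of the conjecture, as the paper's Remark after the proof of Theorem~\ref{main thm for figure 8} makes explicit, is the case of \emph{empty} fixed point classes: one must show that whenever the chosen lift $\tilde f$ has $\fix\tilde f=\emptyset$, the induced endomorphism $\tilde f_\pi$ satisfies $\rk\fix(\tilde f_\pi)+a(\tilde f_\pi)=1$ (equivalently $\ichr(\F)=0$). Proposition~\ref{empty fpc has ichr 1} gives only $0\le\ichr(\F)\le 1$. Your proposal does not engage with this case at all: your localization strategy presupposes fixed vertices at which to read off local indices and eigenrays, and your inductive step presupposes the class meets $Z_0$ nontrivially. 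The genuine problem is to rule out, for a lift with no fixed point in $\tilde X$, the configuration $\rk\fix(\tilde f_\pi)=a(\tilde f_\pi)=0$ (a single non-attracting fixed end, or none) purely from the boundary dynamics of an injective endomorphism. That is the gap, and nothing in your outline addresses it.
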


In other words, we conjecture that $\ind(\F)=1-\rk(\F)-a(\F)$, for some evidence, see Section \ref{sect. examples}. This equality will give a new algebraic approach for computing indices of fixed point classes of $\pi_1$-injective selfmaps of graphs.

\begin{rem}
For any selfmap $f:X\to X$ of a connected finite graph $X$, if $f$ is not $\pi_1$-injective, by \cite[Lemma A]{J2}, it is a \emph{mutant} (see \cite{J2} for a definition) of a $\pi_1$-injective selfmap $g$ of a connected finite graph $Y$ with $\chi(Y)\geq \chi(X)$. Since mutants have the same set of indices of essential fixed point classes, we can also compute the index by $\ichr(\F)$ for an essential fixed point class $\F$ of $f$ if Conjecture \ref{conj} is true. Indeed, let $\F'\in \fpc(g)$ be the corresponding fixed point class of $\F$, then
$$
\ind(f,\F)\xlongequal[]{mutant~ invar.}\ind(g,\F')\xlongequal[]{Conj.~\ref{conj}}\ichr(g,\F').
$$
\end{rem}

Replacing $\ind(\F)$ with $\ichr(\F)$ in the inequality $\mathrm{(B)}$ of Theorem \ref{JWZ main theorem}, as a corollary of Theorem \ref{main thm 1}, we have $2\chi(X)-1\leq \ichr(\F)\leq 1$ and the following bound immediately.

\begin{cor}\label{main cor}
Suppose $X$ is a connected finite graph but not a tree, and $f: X\rightarrow X$ is a $\pi_1$-injective selfmap.
Then
$$\sum_{\F\in\fpc(f)}\max\{0, ~~\rk(\F)+a(\F)/2-1\}\leq -\chi(X),$$
where the sum is taken over all fixed point classes $\F$ of $f$.
\end{cor}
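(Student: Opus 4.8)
The plan is to reduce the statement to part $\mathrm{(B)}$ of Theorem \ref{JWZ main theorem} by a purely algebraic manipulation, using Theorem \ref{main thm 1} to carry out the substitution of $\ichr(\F)$ for $\ind(\F)$. Everything hinges on rewriting the summand $\rk(\F)+a(\F)/2-1$ in terms of $\chr(\F)$ and $\ichr(\F)$.

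First I would record the elementary identity. Since $X$ is a graph we have $\chr(\F)=1-\rk(\F)$ and $\ichr(\F)=1-\rk(\F)-a(\F)$, so that
$$\rk(\F)+\tfrac{1}{2}a(\F)-1=-\tfrac{1}{2}\bigl(\chr(\F)+\ichr(\F)\bigr).$$
Because $\max\{0,-\tfrac12 t\}=-\tfrac12\min\{0,t\}$, applied with $t=\chr(\F)+\ichr(\F)$, the quantity to be estimated is
$$\sum_{\F\in\fpc(f)}\max\{0,\ \rk(\F)+\tfrac12 a(\F)-1\}=-\tfrac12\sum_{\F\in\fpc(f)}\min\{0,\ \chr(\F)+\ichr(\F)\}.$$
(For an empty fixed point class the quantities $\rk(\F),a(\F),\ichr(\F)$ are to be read through the covering-space definitions of Section \ref{Sect. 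Improved characteristics}, so that every term is well defined, consistently with the convention in $\mathrm{(B)}$.) Hence it suffices to establish the strengthened form of $\mathrm{(B)}$,
$$\sum_{\F\in\fpc(f)}\min\{0,\ \chr(\F)+\ichr(\F)\}\ \geq\ 2\chi(X).$$

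The key step is to transport $\mathrm{(B)}$ across the pointwise inequality $\ind(\F)\leq\ichr(\F)$ furnished by Theorem \ref{main thm 1}. Setting $g(\F)=\ind(\F)+\chr(\F)$ and $h(\F)=\ichr(\F)+\chr(\F)$, that inequality reads $g(\F)\leq h(\F)$ for every class. As $t\mapsto\min\{0,t\}$ is non-decreasing, we get $\min\{0,g(\F)\}\leq\min\{0,h(\F)\}$ termwise, and therefore
$$\sum_{\F}\min\{0,h(\F)\}\ \geq\ \sum_{\F}\min\{0,g(\F)\}=\sum_{\ind(\F)+\chr(\F)<0}\bigl(\ind(\F)+\chr(\F)\bigr)\ \geq\ 2\chi(X),$$
where the final inequality is $\mathrm{(B)}$ itself, valid since $X$ is not a tree. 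Dividing by $-2$ and combining with the identity of the previous paragraph gives $\sum_{\F}\max\{0,\rk(\F)+a(\F)/2-1\}\leq-\chi(X)$, as desired.

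I expect no serious obstacle: the mathematical content is concentrated entirely in the single pointwise estimate $\ind(\F)\leq\ichr(\F)$, which is already Theorem \ref{main thm 1}, and the rest is bookkeeping. The one place to be careful is that replacing $\ind(\F)$ by $\ichr(\F)$ alters both the summand and the implicit range of summation at the same time; encoding the truncated sum through the monotone map $\min\{0,\cdot\}$ rather than through an explicit index set handles this cleanly, since monotonicity absorbs the change of range automatically, with no case analysis on the signs of $g(\F)$ and $h(\F)$.
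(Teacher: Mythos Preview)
Your proof is correct and is essentially the same approach the paper takes: the paper simply remarks (just before the corollary) that replacing $\ind(\F)$ by $\ichr(\F)$ in inequality $\mathrm{(B)}$ of Theorem~\ref{JWZ main theorem}, using Theorem~\ref{main thm 1}, gives the bound immediately. Your write-up makes this substitution rigorous via the identity $\rk(\F)+a(\F)/2-1=-\tfrac12(\chr(\F)+\ichr(\F))$ and the monotonicity of $\min\{0,\cdot\}$, which is exactly the bookkeeping that the paper leaves implicit.
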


For an injective endomorphism $\phi: F_n\to F_n$ of a free group $F_n$ of rank $n\geq 1$, it induces an endomorphism $\phi^{\mathrm{ab}}$ of the abelianization of $F_n$,
$$\phi^{\mathrm{ab}}: ~\Z^n\to \Z^n.$$
Let $\tr(\phi^{\mathrm{ab}})$ be the trace of a matrix of $\phi^{\mathrm{ab}}$. For any $c\in F_n$, let $i_c: F_n\to F_n, ~g\mapsto cgc^{-1}$ be the inner automorphism induced by $c$. Let $a(\phi)$ be the number of \emph{equivalence classes of attracting fixed points} for the action of $\phi$ on the boundary of $F_n$, see Definition \ref{def. a(phi)}.

Using the trace $\tr(\phi^{\mathrm{ab}})$, we have the following theorem giving an approach to roughly detecting whether fixed words exist or not.

\begin{thm}\label{main thm: existence of fixed words}
Let $\phi$ be an injective endomorphism of a free group $F_n$. Then there exists $c\in F_n$ such that
$$\rk\fix(i_c\comp \phi)=a(i_c\comp \phi)=0$$
if the trace $\tr(\phi^{\mathrm{ab}})<1$; and
$$\rk\fix(i_c\comp \phi)+a(i_c\comp \phi)>1$$
if $n\leq 2$ and $\tr(\phi^{\mathrm{ab}})>1$.
\end{thm}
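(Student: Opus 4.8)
The plan is to treat the two implications in Theorem~\ref{main thm: existence of fixed words} separately, since they rely on rather different mechanisms. For the first implication (the $\tr(\phi^{\mathrm{ab}})<1$ case), I would exploit the fact that $\rk\fix(\psi)$ and $a(\psi)$ are precisely the algebraic quantities controlling the improved characteristic $\ichr$, and hence, through Theorem~\ref{main thm 1} and Theorem~\ref{main thm for figure 8}, the fixed point indices of a graph selfmap realizing $\psi:=i_c\comp\phi$. The strategy is to realize $\phi$ by a $\pi_1$-injective selfmap $f$ of a graph $X$ with $\pi_1(X)=F_n$, and to recall the standard Nielsen-theoretic identity relating the trace of $\phi^{\mathrm{ab}}$ to the Lefschetz number: on a graph (a $K(F_n,1)$ that is homotopy equivalent to a wedge of circles) one has $L(f)=1-\tr(\phi^{\mathrm{ab}})$, so $\tr(\phi^{\mathrm{ab}})<1$ forces $L(f)>0$. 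The key point is that $\rk\fix(\psi)=a(\psi)=0$ is equivalent to $\ichr(\F)=1$ for the fixed point class $\F$ corresponding to $\psi$ (equivalently to $c$), i.e.\ $\psi$ behaves like an ``attracting-free, rank-zero'' endomorphism. I would argue that one can always choose $c$ so that the single relevant fixed point class is the one carrying all the index, and then use the improved-characteristic machinery to read off the vanishing.

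For this first half the heart of the matter is producing the correct $c$. The inner automorphisms $i_c\comp\phi$ as $c$ ranges over $F_n$ are exactly the different lifts (in the sense of fixed-point-class/covering-space theory from Section~\ref{defn Fpc by covering}) of the map $f$, and distinct conjugacy-type choices of $c$ correspond to distinct fixed point classes $\F_c$ of $f$. Since $\sum_{\F}\ind(\F)=L(f)=1-\tr(\phi^{\mathrm{ab}})$, when $\tr(\phi^{\mathrm{ab}})<1$ there is at least one class of positive index. I would then invoke Theorem~\ref{main thm 1} ($\ind(\F)\le\ichr(\F)=1-\rk(\F)-a(\F)\le 1$) to conclude that a class of positive index must have $\ind(\F)=\ichr(\F)=1$, whence $\rk(\F)=a(\F)=0$; translating back through the stabilizer dictionary ($\stab(f,\F_c)\cong\fix(i_c\comp\phi)$ and $a(\F_c)=a(i_c\comp\phi)$) yields the desired $c$. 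I expect the genuine obstacle to lie in guaranteeing that the positive-index class is realized by an \emph{inner} twist $i_c\comp\phi$ rather than by a more general algebraic conjugate, and in handling degenerate low-rank cases ($n=1$, or $\phi$ with large image collapse), which may need a direct check.

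For the second implication ($n\le 2$, $\tr(\phi^{\mathrm{ab}})>1$) the plan is essentially dual: now $L(f)=1-\tr(\phi^{\mathrm{ab}})<0$, so some fixed point class has negative index, and I would push on Theorem~\ref{main thm for figure 8} (valid when $\chi(X)=-1$, which is exactly the generic $n=2$ situation; the $n=1$ case of a circle is handled directly via Example~\ref{exam: circle}). For an essential class with $\ind(\F)<0$ we have the equality $\ind(\F)=\ichr(\F)=1-\rk(\F)-a(\F)$, so $\ind(\F)\le -1$ forces $\rk(\F)+a(\F)\ge 2>1$. Reinterpreting the stabilizer and attracting-fixed-word counts for the appropriate twist $i_c\comp\phi$ then gives $\rk\fix(i_c\comp\phi)+a(i_c\comp\phi)>1$. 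The restriction $n\le 2$ is precisely what keeps $\chi(X)\ge -1$, letting us use the \emph{equality} rather than merely the inequality of Theorem~\ref{main thm 1}; without it one only gets $\ind(\F)\le\ichr(\F)$, which does not let a negative Lefschetz sum propagate to a single class.

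The main obstacle throughout is bookkeeping the correspondence between the boundary/algebraic invariants $\rk\fix(i_c\comp\phi)$ and $a(i_c\comp\phi)$ on one side and the geometric invariants $\rk(\F)$ and $a(\F)$ on the other, and ensuring that the twist $c$ producing the extremal index class is available as an honest element of $F_n$. Once that dictionary is secured, both implications reduce to combining the Lefschetz--trace identity $L(f)=1-\tr(\phi^{\mathrm{ab}})$ with the (in)equality between index and improved characteristic supplied by Theorems~\ref{main thm 1} and~\ref{main thm for figure 8}.
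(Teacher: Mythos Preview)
Your proposal is correct and follows essentially the same route as the paper: realize $\phi$ by a $\pi_1$-injective graph selfmap $f$, use the Lefschetz--Hopf identity $L(f)=1-\tr(\phi^{\mathrm{ab}})=\sum_{\F}\ind(f,\F)$ to produce a class of positive (resp.\ negative) index, then apply Theorem~\ref{main thm 1} (resp.\ Theorem~\ref{main thm for figure 8}) to force $\ichr(\F)=1$ (resp.\ $\ichr(\F)<0$), and translate back to $\rk\fix(i_c\comp\phi)$ and $a(i_c\comp\phi)$. The one place you flag as a ``genuine obstacle''---whether every fixed point class is realized by an honest inner twist $i_c\comp\phi$---is not an obstacle at all: the paper establishes (in the proof of Theorem~\ref{stronger version of main thm 2}, via Equation~(\ref{equa. ichr psi})) a bijection between fixed point classes of $f$ and similarity classes in $\inn\phi$, so the relevant $c\in F_n$ is automatically available.
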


Moreover, we obtain an upper bound for the rank of the fixed subgroup and the number $a(\phi)$ of an injective endomorphism $\phi$ of a free group.

\begin{thm}\label{main thm 2}
Let $\phi$ be any injective endomorphism of a free group $F_n$. Then
$$\rk\fix(\phi)+a(\phi)/2\leq n.$$
\end{thm}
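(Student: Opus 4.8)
The plan is to transfer this purely algebraic statement about $\phi$ into the topological setting of graph selfmaps and then read the bound off from Corollary~\ref{main cor}. Since $n\geq 1$, I would take $X=R_n$ to be the rose with a single vertex $v$ and $n$ loops, so that $\pi_1(X,v)=F_n$ and $\chi(X)=1-n$; note that $R_n$ is not a tree. Realize $\phi$ by the standard selfmap $f\colon R_n\to R_n$ fixing $v$ and sending the $i$-th loop to the reduced edge-path spelling $\phi(x_i)$ for the free basis $x_1,\dots,x_n$. Then $f_\pi=\phi$, and since $\phi$ is injective, $f$ is $\pi_1$-injective, so Corollary~\ref{main cor} applies to $f$.

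Next I would single out the fixed point class $\F_0\in\fpc(f)$ containing the fixed vertex $v$. Because $v$ is fixed, its stabilizer is exactly $\fix(f_\pi)=\fix(\phi)$, so $\rk(\F_0)=\rk\fix(\phi)$. The crux is to match the two $a$-invariants: I claim $a(\F_0)=a(\phi)$. Both quantities count attracting fixed directions at infinity attached to the lift of $f$ fixing the preferred lift of $v$ — one described topologically via Definition~\ref{def. a(f,F)}, the other via the action of $\phi$ on $\partial F_n$ via Definition~\ref{def. a(phi)} — each taken modulo the respective action of $\fix(\phi)$. Establishing that these two descriptions yield the same count, i.e.\ that the attracting fixed words of $\phi$ in the sense of Definition~\ref{def. a(phi)} are in $\fix(\phi)$-equivariant bijection with the attracting rays in $\F_0$ counted by $a(\F_0)$, is the main point and the step I expect to require the most care. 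It amounts to the compatibility of the algebraic and geometric boundary data and should follow from the way the invariant $a$ was set up in Section~\ref{Sect. Improved characteristics}.

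Granting this identification, the conclusion is immediate. All summands in Corollary~\ref{main cor} are nonnegative, so the single term indexed by $\F_0$ is dominated by the whole sum:
$$\max\{0,\ \rk(\F_0)+a(\F_0)/2-1\}\ \leq\ \sum_{\F\in\fpc(f)}\max\{0,\ \rk(\F)+a(\F)/2-1\}\ \leq\ -\chi(R_n)=n-1.$$
If $\rk\fix(\phi)+a(\phi)/2-1\leq 0$, then the desired inequality holds trivially since $n\geq 1$; otherwise the left-hand maximum equals $\rk(\F_0)+a(\F_0)/2-1=\rk\fix(\phi)+a(\phi)/2-1$, whence $\rk\fix(\phi)+a(\phi)/2-1\leq n-1$. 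Either way $\rk\fix(\phi)+a(\phi)/2\leq n$, as claimed; the boundary case $n=1$, where $\chi(R_1)=0$, is handled identically since the circle is still not a tree. This recovers, through the index machinery, the Gaboriau--Jaeger--Levitt--Lustig bound in the automorphism case and extends it to arbitrary injective endomorphisms.
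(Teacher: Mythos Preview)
Your argument is correct and essentially matches the paper's own route: the paper first proves the stronger Theorem~\ref{stronger version of main thm 2} by realizing $\phi$ on a graph and invoking Corollary~\ref{main cor}, and then deduces Theorem~\ref{main thm 2} by singling out the summand for the similarity class of $\phi$ itself (treating $n=1$ separately via Example~\ref{exam:rankone}), which is exactly your ``pick the term for $\F_0$'' step. The only remark is that the identification you flag as the delicate point, $a(\F_0)=a(\phi)$, is in fact a tautology: by Definition~\ref{def. a(f,F)} one has $a(f,\F_0):=a(\tilde f_\pi)$, and for the lifting fixing the chosen lift of $v$ the induced endomorphism $\tilde f_\pi$ is precisely $f_\pi=\phi$ (equivalently, take the constant $f$-route at $v$ in Section~\ref{subsect. alter def.}), so no boundary comparison is needed.
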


In \cite{GJLL}, Gaboriau, Jaeger, Levitt and Lustig proved the inequality above  for automorphisms of $F_n$, by using groups acting on $\mathbb{R}$-trees. Our proof for general case is based on Theorem \ref{main thm 1} and Bestvina-Handel's train track maps.

The paper is organized as follows. In Section \ref{Sect. Attracting fixed pts of free gps}, we introduce attracting fixed points and $a(\phi)$ for any injective endomorphism $\phi$ of a free group. In Section \ref{Sect. Improved characteristics}, we first give the background of Nielsen theory and define the improved characteristic for fixed point classes of selfmaps of graphs, and then show some invariance of that. In Section \ref{Sect. gragh maps}, we study the improved characteristic of selfmaps of graphs by an approach using train track maps. In Section \ref{sect. proof of thms}, we complete the proofs of Theorem~\ref{main thm 1}, Theorem~\ref{main thm for figure 8}, Theorem~\ref{main thm: existence of fixed words} and Theorem~\ref{main thm 2}. Finally in Section \ref{sect. examples}, we give some examples supporting Conjecture \ref{conj}.\\

\noindent\textbf{Acknowledgements.}  The authors would like to thank Alexander Fel'shtyn for helpful discussions, and thank Shida Wang for detailed comments.


\section{Attracting fixed words and $a(\phi)$ of free groups}\label{Sect. Attracting fixed pts of free gps}

In \cite[Sect. 1]{GJLL}, the authors introduced attracting fixed words and the number $a(\phi)$ of an automorphism $\phi$ of a free group. In this section, we will extend that to injective endomorphisms of free groups.

\subsection{Attracting fixed word and attracting fixed point}\label{subsect. Attracting fixed point}

Let $F$ be a free group of rank $n$. Fixed a basis (i.e., a free generating set) $\Lambda=\{g_1,\ldots, g_n\}$ for $F$, we view $F$ as the set of reduced words in the letters $g_i^{\pm1}$, and $\partial F$ as the set of infinite reduced words $W=w_1w_2\cdots w_i\cdots$, i.e., $w_i\in \Lambda^{\pm}=\{g_1^{\pm1},\ldots, g_n^{\pm1}\}$ and $w_i\neq w_{i+1}^{-1}$. Denote $W_i=w_1\cdots w_i$. The \emph{word length} of a word $W\in F$ with respect to $\Lambda$ is written $|W|$. Given two finite or infinite reduced words $W, V\in \bar F:=F\sqcup \partial F$, let $W\wedge V$ be the longest common initial segment of $W$ and $V$. The \emph{initial segment metric} $d_{i.s}$ on $\bar F$ is defined by $d_{i.s}(W,W)=0$ and for $W \neq V$,
\begin{equation}\nonumber
d_{i.s}(W,V)=\frac{1}{1+|W\wedge V|}.
\end{equation}
With this metric, $\bar F$ is compact (called end completion as in \cite{C} or compactification as a hyperbolic group in the sense of Gromov), and $F$ is dense in $\bar F$.
The boundary $\partial F$ is a compact space that is homeomorphic to a Cantor set when $n\geq 2$. A sequence of reduced words $V_p\in \bar F$ \emph{converges} to an infinite word $W\in \partial F$ if and only if $\lim_{p\to +\infty}|W\wedge V_p|=+\infty$.

The natural actions of $F$ and $\aut(F)$ on $F$ extend continuously to $\bar F$: a left multiply $W: F\to F$ by a word $W\in F$ and an automorphism $f: F\to F$ extend uniquely to homeomorphisms $W: \partial F\to \partial F$ and $\bar f: \partial F\to \partial F$ (see \cite{C}), respectively. Any finitely generated subgroup $F'<F$ is quasi-convex \cite{Sh1}, and hence an inclusion induces a natural embedding $\partial F'\hookrightarrow \partial F$ (\cite[p. 115]{CDP}). For an injective endomorphism $\phi:F\to F$, since $F\cong \phi(F)<F$, we have $\partial F\cong \partial(\phi (F))\hookrightarrow \partial F$. Therefore

\begin{lem}\label{lem phi extend to boudary F}
Let $\phi:F\to F$ be an injective endomorphism of $F$. Then $\phi$ can be extended to a continuous injective map $\bar\phi: \partial F\to \partial F$.
\end{lem}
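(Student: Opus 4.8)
The plan is to factor $\phi$ through its image and invoke functoriality of the Gromov boundary, assembling the two ingredients recalled just before the statement. Since $\phi$ is injective, it restricts to an isomorphism $\phi\colon F\xrightarrow{\sim}\phi(F)$ onto the subgroup $\phi(F)=\langle \phi(g_1),\ldots,\phi(g_n)\rangle<F$, which is finitely generated and, being isomorphic to $F$, free of rank $n$. I would then build $\bar\phi$ as a composition of two boundary maps: the homeomorphism $\partial F\cong\partial(\phi(F))$ induced by the isomorphism $\phi$, followed by the embedding $\partial(\phi(F))\hookrightarrow\partial F$ induced by the inclusion $\iota\colon\phi(F)\hookrightarrow F$.

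For the first map, the key point is that $\phi$ carries the Cayley tree of $(F,\Lambda)$ isomorphically onto the Cayley tree of $(\phi(F),\phi(\Lambda))$, since it sends the basis $\Lambda$ bijectively to the free generating set $\phi(\Lambda)$. A graph isomorphism of trees is an isometry, in particular a quasi-isometry, so it extends uniquely to a homeomorphism of boundaries; concretely, for $W=w_1w_2\cdots\in\partial F$ with truncations $W_i=w_1\cdots w_i$, this sends $W$ to $\lim_{i\to\infty}\phi(W_i)$ computed inside $\overline{\phi(F)}$. For the second map, $\phi(F)$ is a finitely generated subgroup of a free group, hence quasi-convex by the cited result, so the inclusion induces a topological embedding $\partial(\phi(F))\hookrightarrow\partial F$. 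Setting $\bar\phi:=(\partial\iota)\circ(\partial\phi)$ then yields a map $\partial F\to\partial F$ that is continuous and injective, as a composite of a homeomorphism with a topological embedding.

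It remains to check that $\bar\phi$ genuinely extends $\phi$, i.e.\ that $\phi(W_i)\to\bar\phi(W)$ in $\bar F$ for every $W\in\partial F$. This is where quasi-convexity does the real work: a geodesic ray in $\phi(F)$ representing $W$ maps, under $\iota$, to a quasi-geodesic ray in the tree $\bar F$, and a quasi-geodesic ray in a tree converges to a unique boundary point, namely $\bar\phi(W)$. Equivalently, I must rule out unbounded cancellation in the products $\phi(w_1)\phi(w_2)\cdots$, which is precisely the assertion that $\phi$ is a quasi-isometric embedding, guaranteed by quasi-convexity of $\phi(F)$. I expect this convergence estimate to be the only genuinely technical point; continuity and injectivity of $\bar\phi$ then follow formally from the corresponding properties of the two boundary maps being composed.
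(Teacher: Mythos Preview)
Your proposal is correct and follows essentially the same route as the paper: factor $\phi$ as the isomorphism $F\xrightarrow{\sim}\phi(F)$ followed by the inclusion $\phi(F)\hookrightarrow F$, use that the isomorphism induces a boundary homeomorphism $\partial F\cong\partial(\phi(F))$, and invoke quasi-convexity of finitely generated subgroups of free groups to get the embedding $\partial(\phi(F))\hookrightarrow\partial F$. Your write-up is simply more detailed than the paper's one-line argument, in particular in spelling out why $\phi(W_i)\to\bar\phi(W)$.
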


For now on, let $\phi:F\to F$ be an injective endomorphism of $F$. By \cite[p.32, Lem II.2.4]{DV}, there is a \emph{cancelation bound} $\B>0$ for $\phi$:
$$|\phi(W\cdot V)|\geq |\phi(W)|+|\phi(V)|-2\B,$$
whenever $W,V\in F$ are finite reduced words, and $W\cdot V$ denote the product $WV$ if there is no cancelation between $W$ and $V$, i.e., $|W\cdot V|=|W|+|V|$.

Let $W=w_1\cdots w_i\cdots\in \partial F$ be a fixed infinite reduced word of $\phi$. Write
$$\phi(W_i)=W_{k(i)}\cdot V_i$$
 with $W_{k(i)}=W\wedge \phi(W_i)$ and hence $k(i)=|W\wedge \phi(W_i)|$. Since $W$ is fixed by $\phi$, the sequence $k(i)\to +\infty$ as $i$ increases. Bounded cancelation implies $|V_i|\leq \B$, and $|k(i+1)-k(i)|$ is bounded by the constant $\max\{\phi(g_1),\ldots, \phi(g_n)\}$ depending only on $\phi$. A fixed infinite word $W$ is said to be an \emph{attracting fixed word} of $\phi$ if
 $$\lim_{i\to+\infty} |W\wedge \phi(W_{i})|-i = +\infty.$$
Note that there exists $i_0>0$ such that for all $i\geq i_0$, we have $k(i)\geq i+\B+1$. For any reduced word $W'\in \bar F$, let $W_i=W\wedge W'$ and $W'=W_i\cdot V'$, then $\phi(W')=\phi(W_i)\phi(V')$ and the cancelation length between $\phi(W_i)=W_{k(i)}\cdot V_i$ and $\phi(V')$ is bounded by $\B$. Therefore, if $|W\wedge W'|=i\geq i_0$, then $|W\wedge \phi(W')|\geq k(i)-\B>i=|W\wedge W'|$.
So

\begin{lem}\label{critiaon for attracting fixed word}
Let $W$ be an attracting fixed word of $\phi$. Then there exists an integer $i_0$ such that $|W\wedge \phi(W')|>|W\wedge W'|$ for any word $W'$ with $|W\wedge W'|\geq i_0$.
\end{lem}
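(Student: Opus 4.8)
The plan is to turn the defining property of an attracting fixed word, namely $k(i)-i\to+\infty$ where $k(i)=|W\wedge\phi(W_i)|$, into a \emph{uniform} gain of prefix length under $\phi$, and then to transport that gain from the prefixes $W_i$ of $W$ to an arbitrary competitor $W'$ that agrees with $W$ up to level $i$. The single quantitative input I would lean on throughout is the cancelation bound $\B$: the whole argument rests on the fact that applying $\phi$ to a no-cancelation concatenation destroys at most $\B$ letters, so a long initial agreement with $W$ cannot be wiped out.

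First I would use the attracting hypothesis to fix the threshold: since $k(i)-i\to+\infty$, there is an integer $i_0$ with $k(i)\geq i+\B+1$ for every $i\geq i_0$. Next, given any reduced word $W'\in\bar F$ with $|W\wedge W'|=i\geq i_0$, I write $W_i=W\wedge W'$ and decompose $W'=W_i\cdot V'$, a product with no cancelation since $W_i$ is an initial segment of $W'$. Applying $\phi$ gives $\phi(W')=\phi(W_i)\phi(V')$, and by construction $\phi(W_i)=W_{k(i)}\cdot V_i$ with $W_{k(i)}=W\wedge\phi(W_i)$ and $|V_i|\leq\B$. The key estimate is then that when $\phi(W_i)\phi(V')$ is reduced, bounded cancelation deletes at most $\B$ letters from the tail of $\phi(W_i)$, so the initial segment of $\phi(W')$ coincides with that of $\phi(W_i)$ up to length at least $|\phi(W_i)|-\B\geq k(i)-\B$. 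Since the first $k(i)$ letters of $\phi(W_i)$ are exactly $W_{k(i)}$, hence a prefix of $W$, this forces
$$|W\wedge\phi(W')|\geq k(i)-\B\geq i+1>i=|W\wedge W'|,$$
which is the assertion.

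The step I expect to require the most care is the cancelation estimate. A priori the reduction of $\phi(W_i)\phi(V')$ could eat far into $\phi(W_i)$ and shorten the agreement with $W$; ruling this out is precisely the role of the bound $\B$ applied to the no-cancelation product $W_i\cdot V'$. When $W'$ is infinite I would justify the same bound by passing to a finite prefix of $V'$ long enough to exhibit the first letter at which $W'$ leaves $W$ and invoking the continuity of $\bar\phi$ (Lemma \ref{lem phi extend to boudary F}), since the relevant initial letters of $\phi(W')$ depend only on finitely much of $W'$. Finally I would read the statement for words with finite agreement, i.e. implicitly $W'\neq W$, so that $|W\wedge W'|=i$ is a genuine integer; the degenerate case $W'=W$ yields $\phi(W')=W$ and equality rather than strict inequality, and is excluded.
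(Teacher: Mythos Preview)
Your proposal is correct and follows essentially the same line as the paper's own argument (the short paragraph immediately preceding the lemma): choose $i_0$ so that $k(i)\geq i+\B+1$, decompose $W'=W_i\cdot V'$, and use the cancelation bound $\B$ on $\phi(W_i)\phi(V')$ together with $\phi(W_i)=W_{k(i)}\cdot V_i$ to obtain $|W\wedge\phi(W')|\geq k(i)-\B>i$. Your added remarks on the infinite-$W'$ case and the degenerate case $W'=W$ are reasonable caveats that the paper leaves implicit.
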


We say that a fixed infinite word $W$ is an \emph{attracting fixed point} of $\phi$ if there exists a neighborhood $\mathcal{U}$ of $W\in \bar F$ such that
$$W'\in \mathcal U\Longrightarrow \lim_{p\to +\infty}\phi^p(W')=W.$$
Note that by the definition of the initial segment metric on $\bar F$, $W'\in \mathcal U$ if and only if $|W\wedge W'|>r^{-1}-1$, where $r<1$ is the radium of $\mathcal U$.

By a similar discuss as in the proof of \cite[Proposition 1.1]{GJLL}, we have

\begin{prop}
Let $W\in \partial F$ be a fixed infinite word of $\phi$. If $W$ is an attracting fixed word of $\phi$, then $W\not\in \partial (\fix\phi)$. Moreover, $W$ is an attracting fixed point of $\phi$  if and only if it is an attracting fixed word of $\phi$.
\end{prop}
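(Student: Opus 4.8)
The plan is to prove the two assertions separately, using throughout the cancelation bound $\B$ and Lemma~\ref{critiaon for attracting fixed word}, and writing $\phi(W_i)=W_{k(i)}\cdot V_i$ with $k(i)=|W\wedge\phi(W_i)|$ and $|V_i|\le\B$ as set up above. For the first assertion, I would argue by contradiction: suppose $W$ is an attracting fixed word with $W\in\partial(\fix\phi)$. By the definition of the boundary, $W$ is then a limit of a sequence $V_p\in\fix\phi$, that is $|W\wedge V_p|\to+\infty$. Lemma~\ref{critiaon for attracting fixed word} provides an $i_0$ with $|W\wedge\phi(V_p)|>|W\wedge V_p|$ as soon as $|W\wedge V_p|\ge i_0$; but $V_p\in\fix\phi$ gives $\phi(V_p)=V_p$, hence $|W\wedge\phi(V_p)|=|W\wedge V_p|$, a contradiction. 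Therefore $W\notin\partial(\fix\phi)$.

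For the equivalence, the implication ``attracting fixed word $\Rightarrow$ attracting fixed point'' is the easy direction and is essentially a restatement of Lemma~\ref{critiaon for attracting fixed word}. Taking the $i_0$ it supplies, set $\mathcal U=\{W'\in\bar F:|W\wedge W'|\ge i_0\}$, a ball about $W$ in the metric $d_{i.s}$. If $W'\in\mathcal U$ then $|W\wedge\phi(W')|>|W\wedge W'|\ge i_0$, so $\phi(W')\in\mathcal U$; thus $\mathcal U$ is forward invariant under the continuous map $\bar\phi$ (Lemma~\ref{lem phi extend to boudary F}) and, by induction, $|W\wedge\phi^{p}(W')|\ge|W\wedge W'|+p\to+\infty$, i.e. $\phi^p(W')\to W$. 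Hence $W$ is an attracting fixed point with basin $\mathcal U$.

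The substantive direction is the converse, that an attracting fixed point is an attracting fixed word, and here I would argue by contraposition, paralleling \cite[Proposition~1.1]{GJLL}. The elementary input is a one-step estimate from bounded cancelation: writing $W'=W_\ell\cdot S$ with $\ell=|W\wedge W'|$, the reduced form of $\phi(W')$ is obtained from $\phi(W_\ell)=W_{k(\ell)}V_\ell$ followed by $\phi(S)$ with at most $\B$ letters cancelled, so that
\[
|W\wedge\phi(W')|\ge k(\ell)-2\B.
\]
Now suppose $W$ is \emph{not} an attracting fixed word, so $k(i)-i\not\to+\infty$; as the increments $k(i+1)-k(i)$ are bounded, there are a constant $C$ and indices $i_j\to+\infty$ with $k(i_j)\le i_j+C$. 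At such an index the branch word $V_{i_j}$ leaves $W$ only boundedly far along, and I would use it to build a point $W'$ that agrees with $W$ up to position $i_j$ but whose tail $S$ is chosen to point away from $W$, arranging that at each stage $\phi(S)$ fails to prolong the match past $W_{k(\ell)}$. This yields a matching upper bound $|W\wedge\phi(W')|\le k(\ell)+2\B$ and hence an orbit with $\limsup_p|W\wedge\phi^p(W')|<+\infty$. Taking $i_j$ large places such a $W'$ inside any prescribed neighborhood of $W$ while keeping it un-attracted, so no neighborhood can serve as a basin and $W$ is not an attracting fixed point.

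The main obstacle is exactly this last construction. The single-step bound is routine; the difficulty is to control cancelation \emph{uniformly across all iterates}, since choosing the escaping tail once does not by itself guarantee that $\phi(S)$ keeps pointing away from $W$ at every subsequent application — bounded cancelation can accumulate and the letter-images $\phi(g_i)$ may re-synchronize the orbit with $W$. Making the escaping tail robust under iteration, so that the forward orbit provably retains bounded agreement with $W$, is the delicate heart of the argument; this is where the relative positions of $W$, the branch words $V_i$, and $\partial(\fix\phi)$ must be analyzed, just as in \cite{GJLL}.
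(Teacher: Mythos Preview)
Your treatment of the first assertion and of the implication ``attracting fixed word $\Rightarrow$ attracting fixed point'' is fine; the first is a slight variant of the paper's argument (the paper decomposes $W=V_1\cdot V_2\cdots$ with $V_i\in\fix\phi$ and reads off $k(s_i)=s_i$ directly), but your use of Lemma~\ref{critiaon for attracting fixed word} works equally well.

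The genuine gap is in the converse, and you have correctly located it: your contrapositive strategy requires building a point $W'$ whose \emph{entire forward orbit} stays boundedly matched with $W$, and you do not carry this out. The paper sidesteps this construction completely by arguing directly. Since $W$ is an attracting fixed point, the initial segments $W_p$ themselves lie in the basin for $p$ large. Set $U_p:=W_p^{-1}\phi(W_p)$; the algebraic identity $U_p=U_i\Rightarrow W_pW_i^{-1}\in\fix\phi$ shows that if any value is taken infinitely often then $W=\lim_p W_pW_i^{-1}\in\partial(\fix\phi)$, which is impossible for an attracting fixed point (nearby elements of $\fix\phi$ are fixed, not attracted). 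Hence $|U_p|\to\infty$. But $|U_p|=|W_p^{-1}W_{k(p)}\cdot V_p|\le|k(p)-p|+\B$, so $|k(p)-p|\to\infty$, and boundedness of $k(p+1)-k(p)$ together with the attracting hypothesis forces $k(p)-p\to+\infty$. No escaping tail needs to be engineered; the ``robustness under iteration'' problem you flag simply does not arise.
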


\begin{proof}
Let $W$ be an attracting fixed word of $\phi$. If $W\in \partial (\fix\phi)$, we can write $W=V_1\cdot V_2\cdots$ and  $W_{s_i}=V_1\cdots V_i$, where the reduced finite words $V_i\in \fix\phi$. Then $\phi(W_{s_i})=W_{s_i}$ and $k(s_i)=s_i$ for all $i$, contradicting the hypothesis that $W$ is attracting. Therefore, $W\not\in \partial (\fix\phi)$.
Moreover, pick $i_0$ as in Lemma \ref{critiaon for attracting fixed word} and $\mathcal U$ a neighborhood with radium $<i_0^{-1}$. For any $W'\in \mathcal U$, $d_{i.s}(W,W')=(1+|W\wedge W'|)^{-1}<i_0^{-1}$, hence $|W\wedge W'|\geq i_0$. By Lemma \ref{critiaon for attracting fixed word}, we have $|W\wedge \phi(W')|>|W\wedge W'|$, and hence $\lim_{p\to +\infty}|W\wedge \phi^p(W')|=+\infty$, namely, $\lim_{p\to +\infty}\phi^p(W')=W$. Therefore, $W$ is an attracting fixed point of $\phi$.

Now let $W$ be an attracting fixed point of $\phi$. Then for $i$ large enough, we have $W=\lim_{p\to+\infty}\phi^p(W_i)$. Consider the words $U_i=W_i^{-1}\phi(W_i)$. Note that
$$U_p=U_i\quad \Longrightarrow\quad W_pW_i^{-1}\in\fix\phi.$$
If the sequence of words $U_p$ takes the same value infinity times, fix $i$ we get
$$W=\lim_{p\to +\infty}W_p=\lim_{p\to +\infty}W_pW_i^{-1}\in\partial(\fix\phi),$$
contradicting the hypothesis that $W$ is an attracting fixed point. Therefore, $\lim_{p\to +\infty}|U_p|=+\infty$. Recall that
$\phi(W_p)=W_{k(p)}\cdot V_p$ with $|V_p|\leq \B$, we have
$$|k(p)-p|+\B\geq |W_p^{-1}W_{k(p)}\cdot V_p|=|W_p^{-1}\phi(W_p)|=|U_p|\to +\infty$$
as $p$ goes to infinity. Since $|k(p+1)-k(p)|$ is bounded, and $W$ is an attracting fixed point of $\phi$, we have $\lim_{p\to+\infty}(k(p)-p)=+\infty$, i.e., $W$ is an attracting fixed word of $\phi$.
\end{proof}

\subsection{The number $a(\phi)$ and similarity invariance}\label{subsect. similarity invariance}

Now let us give the definition of the number $a(\phi)$ of \emph{equivalence classes of attracting fixed points}.

\begin{defn} \label{def. a(phi)}
Let $\phi:F\to F$ be an injective endomorphism of a free group $F$. We say that two fixed infinite words $W,W'\in \partial F$ are \emph{equivalent} if there exists a fixed word $U\in \fix(\phi)$ such that $W'=UW$. Note that any word equivalent to an attracting fixed word of $\phi$ is also an attracting fixed word of $\phi$. Let $\mathscr{A}(\phi)$ be the set of \emph{equivalence classes of attracting fixed words} of $\phi$, and let $a(\phi)$ be the cardinality of $\mathscr{A}(\phi)$.
\end{defn}

\begin{rem}
Let $\mathcal A(\phi)$ be the set of attracting fixed words of $\phi$. Then $\mathscr A(\phi)$ equals $\fix(\phi)\backslash\mathcal A(\phi)$, the set of orbits of $\fix(\phi)$ acting on $\mathcal A(\phi)$.
\end{rem}

For a free group $F$, let $\edo(F)$ (resp. $\inn(F)$, $\inj(F)$) be the set of endomorphisms (resp. inner automorphisms, injective endomorphisms) of $F$.

Consider the natural left action of  $\inn(F)$ on $\edo(F)$. For an endomorphism $\phi\in\edo(F)$, we say that the orbit of $\phi$ is an \emph{Inn-coset}, written $\inn\phi$. Namely, two endomorphisms $\phi_1,\phi_2\in\edo(F)$ are in the same Inn-coset if and only if there exists $m\in F$ such that $\phi_2=i_m\comp \phi_1$ with $i_m(g)=mgm^{-1}$ for any $g\in F$. We say that $\phi_1$ and $\phi_2$ are \emph{similar} if $m$ can be written as $m=c\phi_1(c^{-1})$ for some $c\in F$, or equivalently, if $\phi_2=i_{c}\comp \phi_1\comp (i_c)^{-1}$.

Note that every Inn-coset is a disjoint union of similarity classes, and the rank $\rk\fix(\phi)$ and the number $a(\phi)$ are both similarity invariants.

As an algebraic version of Corollary \ref{main cor}, we will prove the following stronger version of Theorem \ref{main thm 2} in Section \ref{sect. proof of thms}.

\begin{thm}\label{stronger version of main thm 2}
Let $\phi$ be an injective endomorphism of a free group $F_n$ of rank $n\geq 2$. Then
$$\sum_{[\psi]\subset \inn\phi}\max\{0,~~\rk\fix(\psi)+a(\psi)/2-1\}\leq n-1,$$
where the sum runs over all distinct similarity classes $[\psi]$ contained in the Inn-coset $\inn\phi$.
\end{thm}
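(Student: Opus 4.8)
The plan is to relate the algebraic sum over similarity classes to a geometric sum over fixed point classes of a graph selfmap realizing $\phi$, and then apply Corollary~\ref{main cor}. Concretely, I would first realize $\phi$ topologically: choose a wedge of $n$ circles $X$ (a connected finite graph with $\chi(X)=1-n$) and a $\pi_1$-injective selfmap $f\colon X\to X$ inducing $\phi$ on $\pi_1(X)=F_n$, with the basepoint as a fixed point. Since $X$ is not a tree (as $n\geq 2$), Corollary~\ref{main cor} applies and gives
\[
\sum_{\F\in\fpc(f)}\max\{0,\ \rk(\F)+a(\F)/2-1\}\leq -\chi(X)=n-1.
\]
The entire argument then reduces to matching the terms of this geometric sum with the terms of the algebraic sum indexed by similarity classes $[\psi]\subset\inn\phi$.

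The key bridge is the standard Nielsen-theoretic dictionary between fixed point classes of $f$ and Inn-cosets/similarity data of $f_\pi=\phi$. Using the covering-space description of fixed point classes referenced in the excerpt (Section~\ref{defn Fpc by covering}), each fixed point class $\F$ of $f$ corresponds to an $F_n$-conjugacy-type datum that, on the level of the induced endomorphism, selects a representative $\psi=i_c\comp\phi$ in the Inn-coset $\inn\phi$; two fixed point classes are identified precisely when the corresponding endomorphisms are \emph{similar} in the sense defined just above the statement, i.e. differ by $i_c\comp(-)\comp(i_c)^{-1}$. Under this correspondence the stabilizer $\stab(f,\F)$ is (up to isomorphism) $\fix(\psi)$, so $\rk(\F)=\rk\fix(\psi)$; and the invariant $a(f,\F)$ — which the paper has built to count attracting fixed words at the class $\F$ — equals $a(\psi)$, the number of equivalence classes of attracting fixed words of the representative endomorphism. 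Thus nonempty fixed point classes of $f$ are in bijection with similarity classes $[\psi]\subset\inn\phi$, matching the summands term by term.

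I would therefore argue that the map $\F\mapsto[\psi]$ is a well-defined bijection from the fixed point classes of $f$ (at least those contributing nonzero summands) onto the similarity classes in $\inn\phi$, carrying $\rk(\F)+a(\F)/2-1$ to $\rk\fix(\psi)+a(\psi)/2-1$. Since every nonzero summand in the algebraic sum comes from some $\psi=i_c\comp\phi$ realized as a fixed point class of $f$, and conversely each fixed point class yields a similarity class, the two sums over \emph{positive} terms coincide. Substituting into the displayed consequence of Corollary~\ref{main cor} yields exactly
\[
\sum_{[\psi]\subset\inn\phi}\max\{0,\ \rk\fix(\psi)+a(\psi)/2-1\}\leq n-1,
\]
which is the assertion. (Theorem~\ref{main thm 2} itself then follows by keeping only the single term $[\psi]=[\phi]$.)

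The main obstacle I anticipate is the precise bookkeeping of the correspondence $\F\leftrightarrow[\psi]$, specifically verifying that it is a \emph{bijection onto all of} $\inn\phi$'s similarity classes and that the invariants transport exactly. One must check that every similarity class $[i_c\comp\phi]$ is genuinely realized by some (possibly empty) fixed point class of $f$, and handle the subtlety that fixed point classes may be empty while similarity classes are purely algebraic: empty classes contribute $0$ to the geometric sum, so they can only help the inequality, but one must confirm they do not correspond to similarity classes with a \emph{positive} summand being silently dropped. The cleanest route is to use the invariance results established in Section~\ref{Sect. Improved characteristics} showing that $\rk(\F)$ and $a(\F)$ depend only on the similarity class of the induced endomorphism, so that each similarity class $[\psi]$ with $\rk\fix(\psi)+a(\psi)/2>1$ forces $\fix(\psi)$ to be nontrivial and hence the class to be essential and nonempty; this guarantees every positive algebraic summand is captured by a genuine geometric summand, closing the gap.
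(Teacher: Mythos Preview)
Your approach is essentially the paper's own: realize $\phi$ by a $\pi_1$-injective graph selfmap $f$, set up a bijection between fixed point classes of $f$ and similarity classes in $\inn\phi$ with $\rk(\F)=\rk\fix(\psi)$ and $a(\F)=a(\psi)$, and invoke Corollary~\ref{main cor}. The paper carries this out via the $f$-route formalism of Section~\ref{subsect. alter def.}: each $\psi=i_{[w]}\comp\phi$ is exactly $f_w$, and similarity of $f_{w}$ and $f_{w'}$ amounts to $[w']=[uw(f\comp\bar u)]$, i.e.\ conjugacy of $f$-routes, hence equality of the associated fixed point classes. (This last step uses that $\phi(F_n)$ is nonabelian for $n\geq2$, so its centralizer in $F_n$ is trivial; you should make this explicit.)

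Your final paragraph, however, contains an actual error. The claim that $\rk\fix(\psi)+a(\psi)/2>1$ forces $\fix(\psi)$ to be nontrivial is false: take $\rk\fix(\psi)=0$ and $a(\psi)\geq3$. More importantly, the whole worry is misplaced. In the paper's setup (Definitions~\ref{def. a(f,F)} and \ref{def. f-route class}), empty fixed point classes carry labels and have $\rk(\F)$, $a(\F)$ defined via the induced endomorphism, so the bijection $\F\leftrightarrow[\psi]$ is already a bijection on \emph{all} classes with exact matching of invariants; no terms are dropped. If you wanted a safety net anyway, the correct one is Proposition~\ref{empty fpc has ichr 1}: an empty class has $\rk(\F)+a(\F)\leq1$, hence $\rk(\F)+a(\F)/2\leq1$, so its summand is zero.
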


\begin{exam}\label{exam:rankone}
Let $F$ be the free group of rank $1$, i.e., $F=\langle g\mid - \rangle\cong \mathbb{Z}$. Then any endomorphism $\phi: F\to F$  has the form $\phi(g)=g^k$ with  $\tr(\phi^{\mathrm{ab}})=k$ for some integer $k$.

Note that the boundary $\partial F$ consists of two points: $gg\cdots g\cdots$ and  $g^{-1}g^{-1}\cdots g^{-1}\cdots$. We have that
\begin{center}
\begin{tabular}{|l|c|c|c|c|}
  \hline
 $\tr(\phi^{\mathrm{ab}})$ & $\phi(g)$ & $\fix(\phi)$ & $\rk\fix(\phi)$ & $a(\phi)$ \\\hline
   $0$  & $1$ & $\{1\}$ & $0$ & N/A \\
   $1$  & $g$ & $\mathbb{Z}$ & $1$ & $0$\\
   $k>1$ & $g^k$ & $\{1\}$ & $0$ & $2$\\
   $k<0$ & $g^k$ & $\{1\}$ & $0$ & $0$\\
  \hline
\end{tabular}
\end{center}
The trivial endomorphism is not injective, and therefore $a(\phi)$ is not defined. For the identity endomorphism, each element in $F$  is fixed. It is obvious that the two infinite words are both fixed, but are not attracting.
\end{exam}

\section{Improved characteristic of fixed point classes}\label{Sect. Improved characteristics}

In \cite[Section 2]{JWZ}, the authors gave some definitions and facts of fixed point classes of graph selfmaps. In this section, we will state them and define a new homotopy invariant $a(f, \F)\in\mathbb{N}$, called \emph{improved characteristic}, for any fixed class $\F$ of a $\pi_1$-injective selfmap $f:X\rightarrow X$ of a connected finite graph $X$.

\subsection{Fixed point class}\label{defn Fpc by covering}

Let $p: \tilde{X}\to X$ be the universal covering of $X$, with group $\pi$ of covering transformations
which is identified with the fundamental group $\pi_1(X)$.

For any lifting $\tilde{f}:\tilde{X}\to \tilde{X}$ of $f$, the projection of its fixed point set is called a \emph{fixed
point class} of $f$ , written $\F=p(\fix\tilde{f})$. Strictly speaking, we say two liftings $\tilde{f}$ and $\tilde{f'}$
of $f$ are \emph{conjugate} if there exists $\gamma\in \pi$ such that $\tilde{f'}=\gamma^{-1}\comp \tilde{f}\comp \gamma$. Then
$\F=p(\fix\tilde{f})$ is said to be the fixed point class of $f$ \emph{labeled} by the conjugacy class
of $\tilde{f}$. Thus, a fixed point class always carries a label which is a conjugacy class of
liftings. The fixed point set $\fix f$ decomposes into a disjoint union of fixed point
classes. When $\fix\tilde{f}= \emptyset$, we call $\F=p(\fix\tilde{f})$ an \emph{empty} fixed point class.

Empty fixed point classes have the same index $0$ but may have different labels hence be regarded as different. We would better think of them as hidden rather than nonexistent.

\subsection{Improved characteristic}

Each lifting $\tilde{f}$ induces an injective endomorphism $\tilde{f}_{\pi}:\pi\to\pi$ defined by
$$\tilde{f}\comp \gamma=\tilde{f}_{\pi}(\gamma)\comp \tilde{f},\quad \gamma\in \pi.$$
If two liftings $\tilde{f}$ and $\tilde{f'}$ label the same fixed point class $\F$, i.e., there exists $\gamma\in \pi$ such that $\tilde{f'}=\gamma^{-1}\comp \tilde{f}\comp \gamma$, then the induced endomorphism
$\tilde f'_{\pi}=i_{\gamma^{-1}}\comp\tilde f_{\pi}\comp i_{\gamma}\in\inj(\pi)$ and hence the fixed subgroup $\fix(\tilde f_{\pi})\cong \fix (\tilde f'_{\pi})$ and the number $a(\tilde f_{\pi})=a(\tilde f'_{\pi})$.

The \emph{stabilizer} of a fixed point class $\F=p(\fix\tilde{f})$ is defined as the subgroup
$$\stab(f,\F):=\{\gamma\in\pi|\gamma^{-1}\comp\tilde{f}\comp \gamma=\tilde f\},$$
which is identical to the fixed subgroup $\fix(\tilde f_{\pi})=\{\gamma\in\pi|\tilde f_{\pi}(\gamma)=\gamma\}$.
Up to group isomorphism, it is independent of the choice of $\tilde f$ in its conjugacy class.

For nonempty fixed point classes, the definitions above reduces to the simpler ones given in
Section \ref{Sect. introduction}.

\begin{defn}\label{def. a(f,F)}
Let $f:X\to X$ be a $\pi_1$-injective graph map, and $\F=p(\fix \tilde f)$ be any fixed point class of $f$. Define $\rk(f, \F):=\rk\stab(f,\F)$, $a(f, \F):=a(\tilde f_{\pi})$,
and define the \emph{improved characteristic} of $\F$ to be
$$\ichr(f,\F):=1-\rk(f, \F)-a(f,\F).$$
\end{defn}

\begin{rem}
Note that $2\chi(X)-1\leq\ichr(f,\F)\leq 1$ by Corollary \ref{main cor}.
\end{rem}

By the same argument as the proof of Proposition 2.3 in \cite{GJLL}, we have

\begin{prop}\label{empty fpc has ichr 1}
Let $f:X\to X$ be a $\pi_1$-injective selfmap of a graph. Then for each empty fixed point class $\F$ of $f$, we have $0\leq \ichr(\F)\leq 1.$
\end{prop}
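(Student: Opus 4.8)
The plan is to treat the two inequalities separately. The upper bound is immediate: by Definition \ref{def. a(f,F)} we have $\ichr(\F)=1-\rk(f,\F)-a(f,\F)$, and since $\rk(f,\F)=\rk\fix(\tilde f_\pi)\ge 0$ and $a(f,\F)=a(\tilde f_\pi)\ge 0$ are nonnegative, it follows at once that $\ichr(\F)\le 1$. All the content is in the lower bound $\ichr(\F)\ge 0$: for an empty fixed point class $\F=p(\fix\tilde f)$ with $\fix\tilde f=\emptyset$ and $\phi:=\tilde f_\pi$, I must show
$$\rk\fix(\phi)+a(\phi)\le 1.$$
Write $T:=\tilde X$ for the universal cover, a tree, identify $\partial T$ with $\partial\pi_1(X)$, and let $\bar\phi$ be the boundary extension from Lemma \ref{lem phi extend to boudary F}. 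Every attracting fixed word and every point of $\partial(\fix\phi)$ is a fixed point of $\bar\phi$ on $\partial T$. My strategy is to rule out every configuration with $\rk\fix(\phi)+a(\phi)\ge 2$ by producing, in each case, a genuine fixed point of $\tilde f$ in $T$, contradicting $\fix\tilde f=\emptyset$.

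First I would dispose of $\rk\fix(\phi)\ge 2$. Setting $H:=\fix(\phi)$, the defining relation $\tilde f\circ\gamma=\tilde f_\pi(\gamma)\circ\tilde f$ shows that every $h\in H$ commutes with $\tilde f$; hence $\tilde f$ preserves the minimal $H$-invariant subtree $T_H$ and descends to a selfmap $\bar f_H$ of the finite graph $Y:=T_H/H$, which satisfies $\pi_1(Y)\cong H$ and $\chi(Y)=1-\rk H$. Since $\phi|_H=\id_H$, the map $\bar f_H$ induces $\id_H$ and is therefore freely homotopic to $\id_Y$, because $Y$ is aspherical. As $H$ is nonabelian free, its center is trivial, so the lift of $\bar f_H$ inducing $\id_H$ is unique, hence equals $\tilde f|_{T_H}$. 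The identity map $\id_Y$ has a single fixed point class of index $\chi(Y)=1-\rk H\neq 0$, which is essential; under the homotopy $\bar f_H\simeq\id_Y$ this corresponds to an essential, hence nonempty, class of $\tilde f|_{T_H}$. Thus $\fix(\tilde f|_{T_H})\neq\emptyset$, a contradiction, and so $\rk\fix(\phi)\le 1$.

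It remains to establish the key dynamical statement: in the empty-class setting, $\bar\phi$ has at most one attracting fixed point on $\partial T$. Granting this, I finish quickly. If $\rk\fix(\phi)=0$, then each equivalence class of Definition \ref{def. a(phi)} is a single word, so $a(\phi)$ is at most the number of attracting fixed points, giving $a(\phi)\le 1$. If $\rk\fix(\phi)=1$, say $\fix(\phi)=\langle u\rangle$, then any attracting fixed word $W$ would force $uW$ to be an attracting fixed word as well, producing two distinct attracting fixed points and a contradiction; hence $a(\phi)=0$. In every surviving case $\rk\fix(\phi)+a(\phi)\le 1$, which is exactly $\ichr(\F)\ge 0$. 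To prove the dynamical statement, given two distinct attracting fixed points $\xi,\eta$ of $\bar\phi$, I consider the bi-infinite geodesic $(\xi,\eta)\subset T$; the local attraction of Lemma \ref{critiaon for attracting fixed word} makes $\tilde f$ push points of $(\xi,\eta)$ toward $\xi$ near the $\xi$-end and toward $\eta$ near the $\eta$-end, so an intermediate-value argument along $(\xi,\eta)\cong\Real$ yields an interior point whose $\tilde f$-image projects back to itself.

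The main obstacle is precisely the last step: promoting such a ``projection-fixed'' point to a genuine fixed point of $\tilde f$ in $T$. The difficulty is that $\tilde f$ is only the lift of a graph map and may fold, so it is not an isometry of $T$; this is handled by first replacing $f$ by a tight representative, so that $\tilde f$ carries reduced paths to reduced paths, and then running the argument exactly as in the proof of \cite[Prop.~1.1 and Prop.~2.3]{GJLL}. A secondary point requiring care is the bookkeeping of Definition \ref{def. a(phi)}: one must distinguish attracting fixed words from their $\fix(\phi)$-equivalence classes, so that the reduction above correctly counts attracting fixed points rather than individual words in a single $\fix(\phi)$-orbit.
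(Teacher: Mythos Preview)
Your overall plan coincides with the paper's: prove the contrapositive by showing that $\rk\fix(\phi)+a(\phi)\ge 2$ forces $\fix\tilde f\neq\emptyset$, and split into the same three cases. For the cases involving attracting fixed words the paper uses the tree ``fixed point criterion'' of \cite[p.~19]{BH}: if $\tilde f(x)$ lies in the component of $\tilde X\setminus\{x\}$ not containing $y$, and symmetrically for $y$, then $\tilde f$ has a genuine fixed point on $[x,y]$. Taking $x$ on the ray toward $\xi$ and $y$ on the ray toward $\eta$ close enough to the ends, Lemma~\ref{critiaon for attracting fixed word} puts $\tilde f(x)$ on the $\xi$-side of $x$ and $\tilde f(y)$ on the $\eta$-side of $y$, so the criterion applies directly. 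There is no separate ``projection-fixed'' point to upgrade, and no need to arrange that $\tilde f$ carry reduced paths to reduced paths---indeed, even after tightening $f$ on edges, $\tilde f$ will still fold at vertices, so that property is generally unavailable.

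The genuine gap is in your treatment of $\rk\fix(\phi)\ge 2$. From $\tilde f\circ h=h\circ\tilde f$ for $h\in H$ you infer that $\tilde f$ preserves $T_H$. But commuting with $H$ only tells you that $\tilde f(T_H)$ is an $H$-invariant subtree, hence $\tilde f(T_H)\supseteq T_H$ by minimality; the reverse inclusion $\tilde f(T_H)\subseteq T_H$ does not follow, because $\tilde f$ is not an isometry and can send points of an axis $A_h$ onto branches off $A_h$. Without $\tilde f(T_H)\subseteq T_H$ the quotient selfmap $\bar f_H:Y\to Y$ is not defined and the Lefschetz argument cannot start. The paper sidesteps this entirely by invoking \cite[Lemma~2.1]{BH}, whose statement is exactly what is needed here: if a lift $\tilde f$ commutes with two covering translations having distinct axes, then $\fix\tilde f\neq\emptyset$; the proof is a direct tree argument using bounded cancellation rather than a descent to a quotient graph.
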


\begin{proof}
Let $\F=p(\fix\tilde{f})$ be a fixed point class of $f$. By definition, $\ichr(\F)=1-\rk(\F)-a(\F)\leq 1.$ We will show that $\tilde f$ has a fixed point when $\ichr(\F)<0$.

For the lifting $\tilde f:\tilde X\to \tilde X$, as in \cite[p. 19]{BH}, we have the following:

\begin{fixed point criterion}
Let $x,y\in \tilde X$ be two distinct points with the property that
$\tilde f(x)$ is distinct from $x$ and is not contained in the same connected component of $\tilde X\backslash x$ as $y$, and conversely. Then $\tilde f$ has a fixed point on the path $[x,y]$.
\end{fixed point criterion}

Let every edge of $X$ have length $1$. Then $X$ becomes a metric graph, and equip $\tilde X$ with the lifted metric. For any point $P\in \tilde X$, the map
$$j: \pi\to \tilde X, \quad w\mapsto wP$$
gives a quasi-isometric embedding from the covering transformation group $\pi$ to $\tilde X$. This induces a homeomorphism between $\partial \pi$ and the space $\partial \tilde X$ of ends of $\tilde X$, which is independent of the choice $P$. Moreover, the distance between $\tilde f(j(w))=\tilde f_{\pi}(w)\tilde f(P)$ and $j(\tilde f_{\pi}(w))=\tilde f_{\pi}(w)P$ is bounded by the distance between $P$ and $\tilde f(P)$, independent of $w\in \pi$. It follows that the extension of $\tilde f$ to $\partial \tilde X$ agrees with the extension of $\tilde f_{\pi}$ to $\partial \pi$. Therefore, an attracting fixed point of $\tilde f_{\pi}$ defines an attracting fixed point of $\tilde f$ on $\partial \tilde X$.

If $\ichr(\F)<0$, namely, $\rk\fix(\tilde f_{\pi})+a(\tilde f_{\pi})>1$ by definition. First assume that $\tilde f_{\pi}$ has two distinct (possible equivalent) attracting fixed points $W_1, W_2\in \partial \pi$. Then any two points $x,y\in \tilde X$ sufficiently close to the corresponding attracting fixed points in $\partial \tilde X$ satisfy the hypothesis of the above fixed point criterion, and $\tilde f$ has a fixed point. Such points $W_1, W_2$ exist if $a(\tilde f_{\pi})\geq 2$. They also exist if $a(\tilde f_{\pi})=1$ while $\rk\fix(\tilde f_{\pi})=1$, namely there exist an attracting fixed point $W_1\in \partial \pi$ and a nontrivial element $u\in \fix(\tilde f_{\pi})$, because in this case, we can take $W_2=uW_1$. The only remaining case is when $\rk\fix(\tilde f_{\pi})\geq 2$, then \cite[Lemma 2.1]{BH} applies.
\end{proof}

\subsection{Alternative definitions}\label{subsect. alter def.}

The above definitions of fixed point class and improved characteristic involve covering spaces. Now we state an
alternative approach using paths (introduced in \cite[\S2]{JWZ}), which is convenient for us to prove the homotopy invariance of $\ichr(\F)$.

\begin{defn}\label{defn of fpc by route}
By an $f$-$route$ we mean a homotopy class (rel. endpoints) of path $w:I\rightarrow X$ from a point $x\in X$ to $f(x)$. For brevity, we shall often say the path $w$ (in place of the path class $[ w ]$) is an $f$-route at $x=w(0)$. An $f$-route $w$ gives rise to an endomorphism
$$f_{w}:\pi_1(X,x) \rightarrow\pi_1(X,x),~~[ a] \mapsto [ w(f\comp a)\overline w ] $$
where $a$ is any loop based at $x$, and $\overline w$ denotes the reverse of $w$. For brevity, we will write $f_{\pi}: \pi_1(X)\to\pi_1(X)$
when $w$ and the base point $x$ are omitted. Two $f$-routes $[ w ]$ and $[ w' ]$ are $conjugate$ if there is a path $q:I\rightarrow X$ from $x=w(0)$ to $x'=w'(0)$ such that $[ w' ]=[ \overline qw (f\comp q)]$, that is, $w'$ and $\overline qw (f\comp q)$ homotopic rel. endpoints. We also say that the (possibly tightened) $f$-route $\overline qw (f\comp q)$ is obtained from $w$ by an $f$-$route$ $move$ along the path $q$.
\end{defn}

Note that a constant $f$-route $w$ corresponds to a fixed point $x=w(0)=w(1)\in \fix f$, and the endomorphism $f_{w}$ becomes the usual
$$f_{\pi}:\pi_1(X,x)\rightarrow \pi_1(X,x),~~[ a]\mapsto  [ f\comp a],$$
where $a$ is any loop based at $x$. Two constant $f$-routes are conjugate if and only if the corresponding fixed points can be joined by a Nielsen path. This gives the following definition.

\begin{defn}\label{def. f-route class}
With an $f$-route $w$ (more precisely, with its conjugacy class) we associate a {\em fixed point class} $\F_{w}$ of $f$, which consists of the fixed points that correspond to constant $f$-routes conjugate to $w$. Thus fixed point classes are associated bijectively with conjugacy classes of $f$-routes. A fixed point class $\F_{w}$ can be empty if there is no constant $f$-route conjugate to $w$. Empty fixed point classes are inessential and distinguished by their associated route conjugacy classes.
\end{defn}

Note that this definition is equivalent to the traditional one in Section \ref{defn Fpc by covering} because an
$r$-route specifies a lifting $\tilde f$. If two $f$-route $w, w'$ are associated to the same fixed point class $\F$ of a $\pi_1$-injective graph map $f:X\to X$, then there is a path $q:I\rightarrow X$ from $x=w(0)$ to $x'=w'(0)$ such that $[w']=[\overline qw (f\comp q)]$. It implies a commutative diagram:
$$\xymatrix{
\pi_1(X,x)\ar[d]_{q_\#}^\cong\ar[r]^{f_w} &\pi_1(X,x)\ar[d]_{q_\#}^\cong\\
\pi_1(X,x')\ar[r]^{f_{w'}} & \pi_1(X,x')
}$$
where $q_\#: \pi_1(X,x)\to \pi_1(X,x')$, $[a]\mapsto [\bar qaq]$ is the induced isomorphism by $q$. Therefore, $\rk\fix(f_w)=\rk\fix(f_{w}')$ and $a(f_w)=a(f_{w'})$, and we can define:

\begin{defn}
For a fixed point class $\F_w$ of a $\pi_1$-injective graph map $f:X\to X$, the $stabilizer$ of $\F_w$ is defined to be
$$\stab(f,\F_w):=\fix(f_w)=\{\gamma\in \pi_1(X,w(0))|f_w(\gamma)=\gamma\},$$
which is well defined up to isomorphism. Define $\rk(f, \F_w):=\rk\stab(f,\F_w)$, $a(f, \F_w):=a(f_{w})$,
and define the \emph{improved characteristic} of $\F_w$ to be
$$\ichr(f,\F_w):=1-\rk(f, \F_w)-a(f,\F_w).$$
\end{defn}

\subsection{Invariance}\label{subsect. invariance}

Under a homotopy $H=\{h_t\}_{t\in I}:X\rightarrow X$, each $h_0$-route $w_0$ gives
rise to an $h_1$-route $w_1=w_0\cdot H(w_0(0))$, where $H(w_0(0))$ is the path $\{h_t(w_0(0))\}_{t\in I}$.
Clearly $w_0$ and $w_1$ share the same starting point $w_0(0)$, and
$$(h_0)_{w_0}=(h_1)_{w_1}: \pi_1(X, w_0(0))\to \pi_1(X, w_0(0)).$$
The function $w_0\mapsto w_1$ defines the fixed point class function $\F_{w_0}\mapsto \F_{w_1}$ induced by the homotopy. Therefore, when $h_i$ is $\pi_1$-injective, we have the following fact on the improved characteristic, that is parallel to the one of stabilizer (see \cite[\S2]{JWZ}).

\begin{fact1}[Homotopy invariance]
A homotopy $H=\{h_t\}_{t\in I}:X\rightarrow X$ gives rise to a bijective correspondence $H: \F_{0}\mapsto \F_{1}$ from $\fpc(h_0)$ to $\fpc({h_1})$ with
$$\ind(h_0,\F_{0})=\ind(h_1, \F_{1}),~~
\stab(h_0,\F_{0})\cong \stab(h_1, \F_{1}), ~~a(h_0,\F_{0})=a(h_1,\F_{1}),$$
which indicate that the index $\ind(\F)$ and the improved characteristic $\ichr(\F)$ are homotopy invariants.
\end{fact1}

An \emph{inj-morphism} from a $\pi_1$-injective graph map $f: X\rightarrow X$ to a $\pi_1$-injective graph map $g:Y\rightarrow Y$ means a $\pi_1$-injective map $h:X\rightarrow Y$ such that $h\comp f=g\comp h$. For an $f$-route $w$ at $x=w(0)$, $h$ induces a commutative diagram:
$$\xymatrix{
\pi_1(X,x)\ar[d]_{h_{\pi}}\ar[r]^{f_w} &\pi_1(X,x)\ar[d]^{h_{\pi}}\\
\pi_1(Y,h(x))\ar[r]^{g_{h\comp w}} & \pi_1(Y,h(x))
}$$
where $h_{\pi}:\pi_1(X,x)\to\pi_1(Y,h(x))$ defined by $[a]\mapsto [h\comp a]$ is the injective homomorphism induced by $h$.

Clearly, $h_{\pi}|_{\fix(f_w)}: \fix(f_w)\to \fix(g_{h\comp w})$ is injective. For any two attracting fixed words $W\neq V$ of $f_w$, by Lemma \ref{lem phi extend to boudary F}, $\bar h_{\pi}(W)\neq \bar h_{\pi}(V)$ are two attracting fixed words of $g_{h\comp w}$. So $W\mapsto \bar h_{\pi}(W)$ defines an injection $\bar h_{\pi}: \mathcal A(f_w)\hookrightarrow\mathcal A(g_{h\comp w})$ from the set of attracting fixed words of $f_w$ to that of $g_{h\comp w}$.

\begin{fact2}[Morphism]
An inj-morphism $h$ from a $\pi_1$-injective  selfmap $f$ of graph $X$ to a $\pi_1$-injective selfmap $g$ of graph $Y$ induces a natural function $w\mapsto h\comp w $ from $f$-routes to $g$-routes and a function $\F_{w}\mapsto \F_{h\comp w}$ from $\fpc(f)$ to $\fpc(g)$ such that $h(\F_w)\subset \F_{h\comp w}$, and two injections
$$h_{\pi}:\stab(f,\F_{w})\hookrightarrow\stab(g,\F_{h\comp w}), \quad \bar h_{\pi}:\mathcal A(f_{w})\hookrightarrow \mathcal A(g_{h\comp w}).$$
\end{fact2}

Furthermore, when $h_{\pi}|_{\fix(f_w)}: \fix(f_w)\cong \fix(g_{h\comp w})$, two attracting fixed words $W, V\in \mathcal A(f_w) $ are equivalent if and only if their images $\bar h_{\pi}(W), \bar h_{\pi}(V)\in \mathcal A(g_{h\comp w})$ are equivalent, namely, there exists a word $\omega\in \fix(g_{h\comp w})$ such that $\bar h_{\pi}(W)=\omega \bar h_{\pi}(V)$ if and only if $W=h_{\pi}^{-1}(\omega)V$. It implies an injection $\bar h_{\pi}: \mathscr A(f_w)\hookrightarrow\mathscr A(g_{h\comp w})$ defined by $[W]\mapsto [\bar h_{\pi}(W)]$, where $[W]$ denotes the equivalence class represented by $W$. Therefore

\begin{fact3}[Commutation invariance]
Suppose $\phi: X\to Y$ and $\psi: Y\to X$ are $\pi_1$-injective graph maps. Then the $\pi_1$-injective selfmap  $\psi\comp\phi: X\to X$ and $\phi\comp\psi: Y\to Y$ are said to differ by a $\mathrm{commutation}$. The inj-morphism $\phi$
between them sets up a natural bijective correspondence $\F_{\psi\comp\phi}\mapsto\F_{\phi\comp\psi}$ from $\fpc(\psi\comp\phi)$ to $\fpc(\phi\comp \psi)$, with
$$\ind(\psi\comp\phi,\F_{\psi\comp\phi})=\ind(\phi\comp \psi, \F_{\phi\comp \psi}),$$
and
$$\stab(\psi\comp\phi,\F_{\psi\comp\phi})\cong \stab(\phi\comp \psi, \F_{\phi\comp \psi}), \quad a(\psi\comp\phi,\F_{\psi\comp\phi})=a(\phi\comp \psi,\F_{\phi\comp \psi}),$$
which indicate that $\ind(\F)$ and $\ichr(\F)$ are commutation invariants.
\end{fact3}

\begin{rem}
A homotopy may create nonempty fixed point classes, or remove fixed point
classes. The above correspondence is bijective only when empty fixed point classes are
taken into account.
\end{rem}

\begin{exam}\label{exam: circle}
Let $f: S^1\to S^1$ be a $\pi_1$-injective selfmap of the circle. Then $f$ can be homotoped to a map $g: S^1\to S^1$ defined by $g(e^{i\theta})=e^{ik\theta}$, where $k\neq 0$ is the degree of $f$. Note that all fixed point classes of $g$ induce the same homomorphism $g_{\pi}(z)=z^k$ where $z$ is a generator of the fundamental group $\pi_1(S^1)=\Z$.

If $k=1$, then $g$ is the identity and every fixed point class $\F$ of $g$ is inessential, i.e., $\ind(\F)=0$. From Example \ref{exam:rankone}, we have that $\rk(\F)=1$, $a(\F)=0$ and hence $$\ind(\F)=\ichr(\F)=1-\rk(\F)-a(\F)=0.$$

If $k\neq 1$, then every lifting of $g$ has a fixed point and hence every fixed point class $\F$ of $g$ consists of a single point. It is well-known that all these fixed point classes have the same index $\ind(\F)=\mathrm{sgn}(1-k)$. From Example~\ref{exam:rankone}, we also have $\ind(\F)=\ichr(\F)$.

So, by the homotopy invariance, we have $$\ind(\F)=\ichr(\F)$$
 for every fixed point class $\F$ of $f: S^1\to S^1$.
\end{exam}

\section{Graph maps}\label{Sect. gragh maps}

In this section, we will study properties of the improved characteristic of graph maps. Since our discussion of graph maps is based on Bestvina and Handel's theory of train tracks \cite{BH}. We follow their terminology first.

\subsection{Relative train track map}

A \emph{graph} $X$ is a $1$-dimensional (or possibly $0$-dimensional) finite cellular complex.
The $0$-cells and (open) $1$-cells are called \emph{vertices} and \emph{edges} respectively. A \emph{graph
map} $f:X \to Y$ is a cellular map, that is, it maps vertices to vertices. Up to homotopy
there is no loss to assume that the restriction of $f$ to every edge $e$ of X is either locally
injective or equal to a constant map. A graph map $f:X \to Y$ is $\pi_1$-\emph{injective} if it
induces an injective homomorphism of the fundamental group on each component of
$X$. It is an \emph{immersion} if it sends edges to edges and it is locally injective at vertices.
Clearly immersions are always $\pi_1$-injective.

A \emph{path} $p$ in a graph $X$ is a map $p:[0,1]\to X$ that is either locally injective or equal
to a constant map, in the latter case we say that $p$ is a \emph{trivial path}. Combinatorially, a nontrivial path consists of a finite sequence $e_1,e_2,\ldots, e_k$ of oriented edges $e_i$ with consecutive edges $e_i,e_{i+1}$ adjacent but $e_{i+1}$ not the inverse $\bar e_i$ of $e_i$. For a nontrivial
path $p$ in $X$ , its \emph{initial tip} is the maximal initial open subpath that lies in an edge of
$X$ . The \emph{terminal tip} is defined similarly.

A \emph{ray} $\rho$ in a connected graph $X$ (equipping each edge with length $1$) is a locally isometric map $\rho:[0,+\infty)\to X$, or combinatorially, a ray consists of an infinite sequence $e_1,e_2,\ldots$ of oriented edges $e_i$ with consecutive edges $e_i,e_{i+1}$ adjacent but $e_{i+1}$ not the inverse of $e_i$. The point $\rho(0)$ is called the \emph{origin} of $\rho$.
If fixing a universal covering $q:\tilde X\to X$ and $\tilde v\in q^{-1}(\rho(0))$, then a lifting $\tilde \rho: [0,+\infty)\to \tilde X$ with origin ${\tilde\rho(0)=\tilde v}$ is an isometric embedding, and its image looks like a real ray in the tree $\tilde X$. A \emph{sub-ray} of a ray $\tilde \rho$ means a restriction $\tilde \rho|_{[n,+\infty]}$. For any two points $\tilde x,\tilde y\in \tilde X$, let $[\tilde x,\tilde y]$ denote the unique path from $\tilde x$ to $\tilde y$ in the tree $\tilde X$. Two rays (with different origins) are \emph{equivalent} if their intersection has infinite length. In particular, a ray is always equivalent to its sub-rays. Each equivalence class of rays in $\tilde X$ defines an \emph{end} of the tree $\tilde X$. Let $\partial \tilde X$ denote the space of all the ends of $\tilde X$.
Note that there is a bijective correspondence between the space $\partial \tilde X$ and the set of rays in $\tilde X$ with the same origin. For brevity, we often identify a ray or a path with its image.

A \emph{turn} in $X$ is an unordered pair of oriented edges of $X$ originating at a common vertex. A turn is \emph{nondegenerate} if it
is defined by distinct oriented edges, and is \emph{degenerate} otherwise. A graph map $f: X \to Y$ induces a function $Df$ on the set $\E(X)$ of oriented edges of $X$ by sending
an oriented edge $e$ to the first oriented edge of $f(e)$; if $f(e)$ is trivial we say
$Df(e)=0$. A turn $\{e_1,e_2\}$ is \emph{illegal} with respect to $f:X\to Y$ if the image $\{Df^k(e_1), Df^k(e_2)\}$ under some iterate of $Df$ is degenerate, and is \emph{legal} otherwise.

For a selfmap $f:X\to X$ of graph $X$, an $f$-\emph{Nielsen path} is a nontrivial path $p$ in $X$ joining
two fixed points of $f$ such that $f\comp p¡¦\simeq p$ rel. endpoints; it is \emph{indivisible} if it cannot be
written as a concatenation $p=p_1\cdot p_2$, where $p_1$ and $p_2$ are subpaths of $p$ that are
$f$-Nielsen paths.

Add \cite[Lemma 5.11]{BH} or \cite[Proposition IV 3.2]{DV} to \cite[Theorem BH]{JWZ}, we have the following theorem summarizing the results of Bestvina and Handel \cite{BH} that we need.

\begin{thm}\cite[Theorem BH]{JWZ}\label{Thm BH}
Let $X$ be a connected graph but not a tree, and let $f: X\to X$ be a
$\pi_1$-injective map. Then $f$ has the same homotopy type as a graph selfmap $\beta:Z\to Z$,
where $Z$ is a connected graph without vertices of valence $1$ and all fixed points of $\beta$
are vertices, and there is a $\beta$-invariant proper subgraph $Z_0$, containing all vertices of
$Z$. The map $\beta: (Z,Z_0)\to (Z,Z_0)$ of the pair is of one of the following types.

$\mathrm{Type~1}:$ $\beta$ sends $Z$ into $Z_0$.

$\mathrm{Type~2}:$ $\beta$ cyclically permutes the edges in $Z \backslash Z_0$.

$\mathrm{Type~3}:$ $\beta$ expands edges of $Z \backslash Z_0$ by a factor $\lambda > 1$ with respect to a suitable
non-negative metric $L$ supported on $Z \backslash Z_0$, and has the properties (a)-(c) below.

$\mathrm{(a)}$ For every oriented edge $e$ in $Z \backslash Z_0$, $D\beta(e)$ lies in $Z \backslash Z_0$.

$\mathrm{(b)}$ There is at most one indivisible $\beta$-Nielsen path that intersects $Z \backslash Z_0$.

$\mathrm{(c)}$ If $p$ is an indivisible $\beta$-Nielsen path that intersects $Z \backslash Z_0$, then $p=p_1\bar p_2$, where $p_1, p_2$ are $\beta$-legal paths with length $L(p_1)=L(p_2)$, and the turn $\{\bar p_1, \bar p_2\}$ is the unique illegal turn in $Z \backslash Z_0$ (at a vertex $v_p=p_1(1)=p_2(1)$ of valence $\geq 3$ in $Z$) which degenerates under $D\beta$. Moreover, $\beta(p_i)=p_it$ $(i=1,2)$ where $t$ is a $\beta$-legal path (see Figure~\ref{fig:indivisible Nielsen path} below).
\begin{figure}[h]
\begin{center}
\setlength{\unitlength}{0.7mm}
\begin{picture}(60,50)(-30,-25)
\put(0,0){\line(1,0){20}}
\put(-20,10){\line(2,-1){20}}
\put(-20,-10){\line(2,1){20}}

\put(0,0){\vector(1,0){10}}
\put(-20,10){\vector(2,-1){10}}
\put(-20,-10){\vector(2,1){10}}

\put(0,2){\makebox(0,0)[cb]{$  v_p$}}
\put(10,2){\makebox(0,0)[cb]{$  t $}}
\put(-10,8){\makebox(0,0)[cb]{$ p_1$}}
\put(-10,-8){\makebox(0,0)[ct]{$ p_2$}}
\end{picture}
\end{center}\caption{An indivisible Nielsen path.}\label{fig:indivisible Nielsen path}
\end{figure}
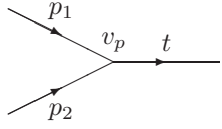
\end{thm}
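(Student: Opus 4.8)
The plan is to assemble the statement from Bestvina and Handel's relative train track machinery, exactly as the sentence preceding the statement suggests. First I would invoke \cite[Theorem BH]{JWZ}, the relative train track representative theorem specialized to $\pi_1$-injective selfmaps of graphs, to replace $f$ up to homotopy by a graph selfmap $\beta: Z\to Z$ equipped with a filtration by $\beta$-invariant subgraphs. I take $Z_0\subset Z$ to be the $\beta$-invariant subgraph obtained by deleting the open edges of the top stratum; by the normalizations built into that theorem it is proper and contains every vertex. This citation already supplies everything in the conclusion except properties (b) and (c): that $Z$ may be taken connected with no valence-$1$ vertex, that all fixed points of $\beta$ are vertices, that $Z_0$ is $\beta$-invariant and proper, and the trichotomy of types together with property (a) for the expanding case.

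Next I would justify the trichotomy by reading off the behavior of $\beta$ on the single irreducible top stratum $H=Z\setminus Z_0$ from its transition matrix $M$. If $M=0$ then $H$ is a zero stratum, so $\beta(H)\subset Z_0$, and $\beta$-invariance of $Z_0$ forces $\beta(Z)\subset Z_0$, which is Type~1. Otherwise $M$ is a nonzero irreducible nonnegative integer matrix, hence its Perron--Frobenius eigenvalue $\lambda$ satisfies $\lambda\geq 1$. If $\lambda=1$, then $M$ is a permutation matrix and irreducibility makes it a single cycle, so $\beta$ cyclically permutes the edges of $H$, which is Type~2. If $\lambda>1$ we are in Type~3, and property (a), that $D\beta(e)$ stays in $Z\setminus Z_0$ for every oriented edge $e$ of $H$, is exactly the relative train track condition already guaranteed by \cite[Theorem BH]{JWZ}.

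It remains to pin down the indivisible $\beta$-Nielsen paths meeting the expanding stratum in Type~3, i.e.\ properties (b) and (c). Here I would add \cite[Lemma 5.11]{BH}, equivalently \cite[Proposition IV 3.2]{DV}: in an exponentially growing stratum of a relative train track map there is at most one indivisible Nielsen path of top height; it decomposes as $p=p_1\bar p_2$ with $p_1,p_2$ legal of equal $L$-length meeting at the unique illegal turn $\{\bar p_1,\bar p_2\}$ that degenerates under $D\beta$, and $\beta(p_i)=p_i t$ for a common legal path $t$. This is (b) and (c) verbatim, and completes the assembly of Theorem~\ref{Thm BH}.

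The substantive content, and the step I expect to be the main obstacle, lies not in the bookkeeping above but in the analytic heart hidden inside these citations. Arranging the three normalizations at once---fixed points are vertices, no valence-$1$ vertices, and $Z_0$ carries all vertices---without breaking the relative train track property requires the controlled subdivision and invariant-forest collapsing of \cite{BH}, while the uniqueness and the self-similar shape $\beta(p_i)=p_i t$ of the Nielsen path rest on the bounded cancellation estimate for $\beta$ combined with the Perron--Frobenius expansion of $H$. Since our statement only repackages these results, the real task reduces to checking that the choice of $Z_0$ is compatible with both the trichotomy and the Nielsen path lemma, which holds because subdivision and collapsing preserve the homotopy type and the length data $L$ on $Z\setminus Z_0$.
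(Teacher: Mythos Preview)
Your proposal is correct and matches the paper's approach exactly: the paper does not give a proof of this theorem at all, but simply states in the preceding sentence ``Add \cite[Lemma 5.11]{BH} or \cite[Proposition IV 3.2]{DV} to \cite[Theorem BH]{JWZ}, we have the following theorem summarizing the results of Bestvina and Handel \cite{BH} that we need.'' Your assembly---invoking \cite[Theorem BH]{JWZ} for the relative train track structure, the three normalizations, the trichotomy, and property~(a), then appending \cite[Lemma 5.11]{BH} / \cite[Proposition IV 3.2]{DV} for the Nielsen path properties~(b) and~(c)---is precisely this, only with more explanatory detail than the paper itself supplies.
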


A graph map as in Theorem \ref{Thm BH} is called a \emph{relative train track} map, for short, say an RTT map.

Let $\beta:(Z,Z_0)\to (Z,Z_0)$ be an RTT map. Denote $\beta_0:=\beta|_{Z_0}: Z_0\to Z_0$. In order to discuss the index and the improved characteristic, we introduce some notations.

Let $\V(Z)$ be the set of vertices of $Z$, and $\E(Z \backslash Z_0)$ the set of oriented edges of $Z \backslash Z_0$. For a fixed vertex $v\in \V(Z)$,
let $\delta(v)$ be the number of oriented edges $e\in \E(Z \backslash Z_0)$ with the additional requirement that $e$ gets initially expanded
along itself by $\beta$ (In Type 2, when $Z \backslash Z_0$ is a single edge $e$, consider that $e$ gets initially expanded
along itself on one tip and shrinks on the other), that is
$$\Delta(v):=\{e\in \E(Z \backslash Z_0)|e(0)=v, ~~D\beta(e)=e\}, \quad \delta(v):=\#\Delta(v).$$
For a nonempty fixed point class $\F$ of $\beta$, let
$$\Delta(\F):=\{e\in \E(Z \backslash Z_0)|e(0)\in \F, ~~D\beta(e)=e\}=\bigsqcup_{v\in\F}\Delta(v),$$
and let $\delta(\F):=\#\Delta(\F)=\sum_{v\in\F}\delta(v).$
Recall that $\ind(\beta, v)=\ind(\beta_0, v)-\delta(v)$ for any fixed point $v$ of $\beta_0$. So, by the additivity of index, for every $\beta$-fixed point class $\F$, we have
\begin{equation}\label{equa. additivity of index}
\ind(\beta, \F)=\ind(\beta_0, \F)-\delta(\F),
\end{equation}
where $\ind(\beta_0, \F)=\sum_{i=1}^k\ind(\beta_0, \F_i)$ if $\F=\sqcup \F_i$ is a union of finite many $\beta_0$-fixed point classes $\F_i, i=1,\ldots,k$.

\subsection{A bijective correspondence}\label{subsect. bijective correspondence}
For any nonempty fixed point class $\F$ of $\beta: Z\to Z$, to give some information on $a(\beta,\F)$, we fix a universal covering $q: \tilde Z\to \tilde Z$ of $Z$,  with group $\pi$ of covering transformations identified with the fundamental group $\pi_1(Z, v_0)$ for $v_0\in \F$.
Pick $\tilde v_0\in q^{-1}(v_0)$ and a lifting $\tilde \beta:\tilde Z\to \tilde Z$ of $\beta$ with $\tilde \beta(\tilde v_0)=\tilde v_0$, then $\F=q(\fix\tilde \beta)$, and the lifting $\tilde{\beta}$ induces an injective endomorphism $\beta_{\pi}:\pi\to\pi$ defined by
\begin{equation}
\tilde{\beta}\comp \gamma=\beta_{\pi}(\gamma)\comp \tilde{\beta}, \quad \forall \gamma\in \pi.
\end{equation}

Endow $\tilde Z$ with a metric $d$ with each edge length $1$. Then the map
$$j: \pi\to \tilde Z, \quad \gamma\mapsto \gamma(\tilde v_0)$$
is $\pi$-equivariant (i.e. $\alpha(j(\gamma))=j(\alpha\gamma)$ for any $\alpha, \gamma\in \pi$), and gives a quasi-isometric embedding from the covering transformation group $\pi$ to the covering space $\tilde Z$. This induces a $\pi$-equivariant homeomorphism $\bar j: \partial \pi\to \partial \tilde Z$ between $\partial \pi$ and the space $\partial \tilde Z$ of ends of $\tilde Z$. Moreover, since $\tilde \beta(j(\gamma))=\beta_{\pi}(\gamma)(\tilde \beta(\tilde v_0))=\beta_{\pi}(\gamma)(\tilde v_0)=j(\beta_{\pi}(\gamma))$, we have a commutative diagram
$$\xymatrix{
\pi\ar[d]_j\ar[r]^{\beta_{\pi}} & \pi\ar[d]^{j}\\
\tilde Z\ar[r]^{\tilde \beta}        & \tilde Z
}$$
It follows that the extension of $\tilde \beta$ to $\partial \tilde Z$ agrees with the extension of $\beta_{\pi}$ to $\partial \pi$. Therefore, we have

\noindent\textbf{Assertion $\star$}. \emph{An attracting fixed word $W\in \partial \pi$ of $\beta_{\pi}$ defines an attracting fixed point $\bar j(W)\in\partial \tilde Z$ of $\tilde \beta$, and the $\pi$-equivariant homeomorphism $\bar j: \partial \pi\to \partial \tilde Z$ induces a bijective correspondence
$$\bar j|_{\mathcal{A(\beta_{\pi})}}: \mathcal{A(\beta_{\pi})}\to \mathcal{A(\tilde \beta)}$$
between the set $\mathcal{A(\beta_{\pi})}$ of attracting fixed words of $\beta_{\pi}$ in $\partial \pi$ and the set $\mathcal{A(\tilde \beta})$ of attracting fixed points of $\tilde \beta$ in $\partial \tilde Z$.}

Here a fixed end $\mathcal{E}\in \mathcal A(\tilde \beta)$ represented by a ray $\tilde \rho=\tilde e_1\cdots\tilde e_i\cdots\subset\tilde Z$ is an \emph{attracting fixed point} of $\tilde \beta$, if there exists a number $N>0$ such that for any point $\tilde x\in \tilde Z$, we have
$$d([\tilde v_0, \tilde x]\cap \tilde \rho)>N \Longrightarrow \lim_{k\to+\infty}d([\tilde v_0, \tilde\beta^k(\tilde x)]\cap \tilde\rho)=+\infty,$$
that is the same as the one in free groups (see Section \ref{subsect. Attracting fixed point}).

\subsection{RTT map of Type 3}\label{subsect. RTT type 3}

In this subsection, let $\beta:(Z,Z_0)\to (Z,Z_0)$ be an RTT map of Type 3 in Theorem \ref{Thm BH}.

A path with no illegal turns in $Z \backslash Z_0$ is said to be \emph{$\beta$-legal}. Under the non-negative metric $L$ supported on $Z \backslash Z_0$, we have $L(\beta(\sigma))=\lambda L(\sigma)$ for any $\beta$-legal path $\sigma$. For convenience,  we state \cite[Lemma 5.8]{BH} as follows:

\begin{lem}\label{key lemma: legal path expending}
Suppose $\sigma=\mu_1\tau_1\mu_2\ldots \tau_{l-1}\mu_l$ is a decomposition of a $\beta$-legal path $\sigma$ into subpaths $\mu_j\subset Z \backslash Z_0$ and $\tau_j\subset Z_0.$  Then
$$[\beta(\sigma)]=\beta(\mu_1)\cdot [\beta(\tau_1)]\cdot \beta(\mu_2)\cdots[\beta(\tau_{l-1})]\cdot\beta(\mu_l)$$
and is $\beta$-legal. Here $[\beta(\sigma)]$ denotes the path tightened from $\beta(\sigma)$, and $\beta(\mu_i)\cdot [\beta(\tau_i)]$ indicates that the turns between $\beta(\mu_i)$ and $[\beta(\tau_i)]$ are legal.
\end{lem}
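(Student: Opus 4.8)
The plan is to analyze the untightened concatenation $\beta(\sigma)=\beta(\mu_1)\beta(\tau_1)\cdots\beta(\tau_{l-1})\beta(\mu_l)$ stratum by stratum, to show that all cancellation produced by tightening is confined to the interiors of the blocks $\beta(\tau_j)$, and that every turn surviving in the tightened path either lies in $Z_0$ or is legal. Writing $H:=Z\backslash Z_0$, the two assertions to establish are: (i) the displayed decomposition, i.e.\ that $\beta(\mu_1)\cdot[\beta(\tau_1)]\cdots[\beta(\tau_{l-1})]\cdot\beta(\mu_l)$ is already tight and therefore equals $[\beta(\sigma)]$; and (ii) that this path is $\beta$-legal.

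The backbone is a stratum bookkeeping based on Theorem~\ref{Thm BH}(a). Property (a) gives $D\beta(e)\in H$ for every $e\in H$, and since $Z_0$ is $\beta$-invariant one also has $D\beta(e')\subset Z_0$ for every edge $e'\subset Z_0$; both facts persist under iteration of $D\beta$. From this I would extract three consequences. First, any turn $\{e,e'\}$ with $e\in H$ and $e'\subset Z_0$ is legal, since $D\beta^k(e)\in H$ and $D\beta^k(e')\subset Z_0$ lie in disjoint edge sets for all $k$, so the turn never degenerates; in particular an $H$-edge and a $Z_0$-edge can never be mutually inverse, so no cancellation can occur at such a \emph{mixed} junction. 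Second, for each $\beta$-legal $\mu_j\subset H$ the image $\beta(\mu_j)$ is tight, $\beta$-legal, and begins and ends with an $H$-edge: one argues by induction on the edge-length of $\mu_j$, using that $\beta(e)$ is itself $\beta$-legal for each $H$-edge $e$ (the train-track property of the top stratum) and that the $D\beta$-image of the legal turn between consecutive edges of $\mu_j$ is again legal, hence non-degenerate and cancellation-free. Third, since $\beta(Z_0)\subset Z_0$, each $\beta(\tau_j)$ and its tightening $[\beta(\tau_j)]$ lie entirely in $Z_0$.

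Assembling these, tightening of $\beta(\sigma)$ can only introduce cancellation inside the $\beta(\tau_j)$-blocks: the $\beta(\mu_j)$-blocks are already tight, and every junction between a $\beta(\mu_j)$-block and an adjacent $\beta(\tau_j)$-block is mixed, hence cancellation-free and legal, and remains so after $\beta(\tau_j)$ is internally tightened (its outer edges stay in $Z_0$). This gives the decomposition (i). For (ii), every turn of the resulting path is of exactly one of three kinds: interior to some tight $\beta$-legal $\beta(\mu_j)$ (legal), interior to some $[\beta(\tau_j)]\subset Z_0$ (irrelevant to $H$-legality, which only constrains turns in $H$), or a mixed junction turn (legal by the first consequence). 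Hence the path is $\beta$-legal.

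The main obstacle is the degenerate case in which some $[\beta(\tau_j)]$ tightens to the trivial path, so that two top-stratum blocks $\beta(\mu_j)$ and $\beta(\mu_{j+1})$ become adjacent and create a genuine $H$-to-$H$ turn not covered by the mixed-junction argument. If $x,y$ denote the last edge of $\mu_j$ and the first edge of $\mu_{j+1}$, a short computation shows this new turn is the $D\beta$-image of $\{\bar x,y\}$, whose legality is \emph{not} automatic from $\beta$-legality of $\sigma$, because $x$ and $y$ are separated in $\sigma$ by the collapsing $Z_0$-detour $\tau_j$. Resolving this is the crux, and the natural tools are the relative train track axioms rather than turn-combinatorics alone: I would use that a $\beta$-legal path in $H$ has its $L$-length multiplied exactly by $\lambda$ under $\beta$, losing no $H$-length to cancellation, together with property (c), which forces the \emph{unique} illegal turn in $H$ to be the Nielsen turn; these sharply constrain when an $H$-to-$H$ cancellation can arise across a collapsing block, and should either exclude such collapse under the hypotheses or force the surviving turn to be legal. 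This is the step where the expansion factor $\lambda>1$ and the train-track structure, not mere stratum bookkeeping, do the real work.
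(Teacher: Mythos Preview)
The paper does not prove this lemma; it merely restates \cite[Lemma~5.8]{BH} for convenience, so there is no in-paper argument to compare against. Your sketch is in fact very close to Bestvina--Handel's original proof, and you correctly isolate the only nontrivial point: the possibility that some $[\beta(\tau_j)]$ is trivial, forcing two $H$-blocks to meet at a turn $\{D\beta(\bar x),D\beta(y)\}$ whose legality is not guaranteed by the hypothesis on $\sigma$. Your proposed resolution via property~(c) and the expansion factor $\lambda$ is, however, not the right tool and is unlikely to close the gap cleanly: uniqueness of the illegal turn in $H$ neither rules out that $\{\bar x,y\}$ is that turn nor controls what happens at such a junction, and the $L$-length argument says nothing about cancellation between $H$-edges meeting across a collapsed $Z_0$-block.

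The missing ingredient is the second relative train track axiom (RTT-(ii) in \cite{BH}), which is part of the definition of a relative train track map but is not restated in the paper's Theorem~\ref{Thm BH}: if $\gamma\subset Z_0$ is a nontrivial path whose endpoints are also endpoints of edges in $H$, then $[\beta(\gamma)]$ is a nontrivial path. Each $\tau_j$ is exactly such a path, so $[\beta(\tau_j)]$ is never trivial and the degenerate case you worry about simply does not arise. With RTT-(ii) in hand, your stratum bookkeeping goes through verbatim: every junction between a $\beta(\mu_j)$-block and an adjacent $[\beta(\tau_j)]$-block is a genuine mixed turn, hence legal and cancellation-free, and all remaining $H$-turns lie inside some $\beta(\mu_j)$, which is legal by RTT-(iii) (equivalently, the train-track property of the top stratum that you already invoke).
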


In the following, we shall give a key lemma that summarizes the relation between $\beta$-invariant edges and attracting fixed words of $\beta_{\pi}$.

\begin{lem}\label{key lemma: expending edges are infinite words}
Let $\beta:(Z,Z_0)\to (Z,Z_0)$ be an RTT map of Type 3, and $\F$ a nonempty fixed class of $\beta$. Suppose $v_0\in \F\subset\V(Z)$ is a fixed point of $\beta$, and $\beta_{\pi}: \pi_1(Z, v_0)\to  \pi_1(Z, v_0)$ is the induced injective endomorphism of $\beta$. Then

$\mathrm (1)$ Every oriented edge $e\in \Delta(\F)$ defines an equivalence class $\mathscr W_e$ of attracting fixed words of $\beta_{\pi}$, and $\mathscr W_e$ does not contain any attracting fixed word of $(\beta_0)_{\pi}: \pi_1(Z_0, v_0)\to  \pi_1(Z_0, v_0)$.

$\mathrm (2)$ Suppose $e_i\in \Delta(\F)~(i=1,2)$ are two edges with initial points $e_i(0)$ two (possibly the same) fixed points in $\F$. Then $\mathscr W_{e_1}$ and $\mathscr W_{e_2}$ are equal if and only if there exists an indivisible Nielsen path $p=p_1\bar p_2$ as in $\mathrm{Type~3(c)}$ of Theorem \ref{Thm BH} such that $e_i=D\beta(p_i)$ are the initial edges of the $\beta$-legal paths $p_i$ for $i=1,2$.

$\mathrm (3)$ For every equivalence class $\mathscr W$ of attracting fixed words of $\beta_{\pi}$ not containing an attracting fixed word of $(\beta_0)_{\pi}$, there exists an oriented edge $e\in \Delta(\F)$ such that $\mathscr W=\mathscr W_e$.
\end{lem}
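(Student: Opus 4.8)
The plan is to work entirely in the universal cover $\tilde Z$ and transport the conclusions back to $\partial\pi$ through Assertion $\star$, which identifies attracting fixed words of $\beta_\pi$ with attracting fixed points of $\tilde\beta$ on $\partial\tilde Z$. Fix the lift $\tilde\beta$ with $\F=q(\fix\tilde\beta)$; then every vertex of $\F$ has at least one lift in $\fix\tilde\beta$, and two lifts over the same vertex differ by an element of $\fix(\beta_\pi)$. The central object is, for an edge $e\in\Delta(\F)$ with $e(0)=v$ and a lift $\tilde v\in\fix\tilde\beta$ of $v$, the eigenray $\tilde\rho_e:=\lim_{k\to\infty}\tilde\beta^k(\tilde e)$, where $\tilde e$ is the lift of $e$ at $\tilde v$. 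Since $D\beta(e)=e$, each $\tilde\beta^k(\tilde e)$ is an initial segment of $\tilde\beta^{k+1}(\tilde e)$, so $\tilde\rho_e$ is a well-defined $\tilde\beta$-invariant $\beta$-legal ray; a different choice of $\tilde v$ translates $\tilde\rho_e$ by $\fix(\beta_\pi)$.

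For part (1) I will first check that $\tilde\rho_e$ represents an attracting fixed point of $\tilde\beta$. Its end is fixed because $\tilde\beta(\tilde\rho_e)=\tilde\rho_e$, and it is attracting because $e\in Z\backslash Z_0$ forces $\tilde\rho_e$ to contain infinitely many edges of $Z\backslash Z_0$, whose $L$-length is multiplied by $\lambda>1$ under $\beta$; combined with the cancellation bound $\B$ this gives, exactly as in Lemma \ref{critiaon for attracting fixed word}, an $i_0$ such that any $\tilde x$ sharing a sufficiently long initial segment with $\tilde\rho_e$ satisfies $d([\tilde v_0,\tilde\beta^k(\tilde x)]\cap\tilde\rho_e)\to\infty$. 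Transferring by $\bar j^{-1}$ produces an attracting fixed word whose class I define to be $\mathscr W_e$. Finally, every word in $\mathscr W_e$ is a $\fix(\beta_\pi)$-translate of this one, so the corresponding end is a deck-translate of the end of $\tilde\rho_e$ and still crosses $Z\backslash Z_0$ infinitely often; since an attracting fixed word of $(\beta_0)_\pi$ is represented by a ray eventually contained in the lift of $Z_0$, hence with only finitely many $Z\backslash Z_0$ edges, no such word can lie in $\mathscr W_e$.

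For part (2) I use the tree geometry of merging rays. In the converse direction, given a Type 3(c) Nielsen path $p=p_1\bar p_2$ with $e_i=D\beta(p_i)$, I lift $p$ through fixed vertices $\tilde a_1,\tilde a_2\in\fix\tilde\beta$ over $p_1(0),p_2(0)$ meeting at the lift $\tilde v_p$ of the cusp, and use $\beta(p_i)=p_i t$ with a common $t$ to see that beyond $\tilde v_p$ the eigenrays $\tilde\rho_{e_1},\tilde\rho_{e_2}$ follow the identical lift of $t$, hence share an infinite tail and define the same end; thus $\mathscr W_{e_1}=\mathscr W_{e_2}$. In the forward direction, if $\mathscr W_{e_1}=\mathscr W_{e_2}$ I may, after a $\fix(\beta_\pi)$-translation, arrange that the two invariant legal eigenrays from distinct fixed lifts $\tilde a_1,\tilde a_2$ define the same end, hence meet at a branch point $\tilde w$ and coincide afterwards. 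The identity $\tilde\beta([\tilde a_i,\tilde w])=[\tilde a_i,\tilde w]\cdot s$ with the same $s=[\tilde w,\tilde\beta(\tilde w)]$ shows, via the cancellation $s\bar s=1$, that the projection $p$ of $[\tilde a_1,\tilde w]\,\overline{[\tilde a_2,\tilde w]}$ satisfies $[\beta(p)]=p$, i.e. $p$ is a Nielsen path; comparing $\lambda L([\tilde a_i,\tilde w])=L([\tilde a_i,\tilde w])+L(s)$ forces $L(p_1)=L(p_2)$, and the turn at $\tilde w$ must be illegal, since otherwise $p$ would be $\beta$-legal of positive $L$-length and could not be fixed by $[\beta(\cdot)]$. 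By property (b) this is the unique indivisible Nielsen path, of the required form.

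For part (3), I start from an attracting fixed word, transfer it to an attracting fixed end $\mathcal E$ of $\tilde\beta$, and represent it by the geodesic ray $\tilde\rho=[\tilde v_0,\mathcal E)$, which is $\tilde\beta$-invariant because $\tilde v_0$ and $\mathcal E$ are both fixed. The main step, and the principal obstacle, is to show that such an invariant attracting ray is $\beta$-legal. I plan to argue by contradiction: a first illegal turn of $\tilde\rho$ in $Z\backslash Z_0$ degenerates under some $D\beta^N$, so tightening $\tilde\beta^N(\tilde\rho)$ folds and cancels edges around the image of that turn, altering the initial combinatorics of the ray and contradicting $[\tilde\beta^N(\tilde\rho)]=\tilde\rho$ — unless the fold reproduces $\tilde\rho$ periodically, which is precisely the Nielsen configuration and yields a non-attracting end, contrary to hypothesis. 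Once $\tilde\rho$ is $\beta$-legal, Lemma \ref{key lemma: legal path expending} applies: writing $\tilde\rho=\tilde\tau_0\cdot\tilde\mu\cdots$ with $\tilde\tau_0\subset Z_0$ the initial segment up to the first $Z\backslash Z_0$ edge $\tilde\mu$, invariance forces $[\tilde\beta(\tilde\tau_0)]=\tilde\tau_0$ and makes the first edge of $\tilde\beta(\tilde\mu)$ equal to $\tilde\mu$, i.e. $D\beta(\mu)=\mu$ for $\mu=q(\tilde\mu)$; hence $\tilde\mu(0)\in\fix\tilde\beta$, so $e:=\mu\in\Delta(\F)$ and the sub-ray from $\tilde\mu(0)$ is precisely $\tilde\rho_e$. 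Therefore $\mathcal E$ equals the end of $\tilde\rho_e$ and $\mathscr W=\mathscr W_e$, completing the correspondence.
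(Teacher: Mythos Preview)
Your arguments for parts (1) and (2) are essentially the paper's, and they are fine.

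The gap is in part (3). Your central claim---that the geodesic ray $\tilde\rho=[\tilde v_0,\mathcal E)$ must be $\beta$-legal---is false in general. Suppose the class $\F$ contains two fixed vertices $a,b$ joined by the indivisible Nielsen path $p=p_1\bar p_2$ of Type~3(c), and suppose $b$ carries another edge $e'\in\Delta(b)$ distinct from the initial edge of $p_2$. Take $v_0=a$ and lift so that $\tilde a,\tilde b\in\fix\tilde\beta$ are joined by the lift $\tilde p$. Then the geodesic from $\tilde v_0=\tilde a$ to the end of the eigenray $\tilde\rho_{e'}$ is $\tilde p\cdot\tilde\rho_{e'}$, a $\tilde\beta$-invariant ray whose end is attracting (by your own part (1)) and which satisfies the hypothesis of (3), yet it contains the illegal turn at the cusp $\tilde v_p$. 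Your dichotomy ``either the fold alters the initial combinatorics, or it reproduces $\tilde\rho$ periodically and the end is non-attracting'' misses this third possibility: the fold at the illegal turn cancels exactly (because $\tilde\beta(\tilde p_i)=\tilde p_i\tilde t$ with a common $\tilde t$), so $[\tilde\beta(\tilde\rho)]=\tilde\rho$ with no periodicity, and the end remains attracting thanks to the legal tail.

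The paper avoids legality altogether. It observes that $S_\rho:=\tilde\rho\cap\fix\tilde\beta$ is finite (else $\mathcal E\in\partial\fix\tilde\beta$, contradicting attractivity), takes $\tilde v'$ to be the \emph{last} fixed vertex on $\tilde\rho$, and lets $\tilde e$ be the first edge of the sub-ray $\rho'$ from $\tilde v'$. The fixed point criterion on the tree (applied to $\tilde v'$ and a far point on $\rho'$) forces $D\tilde\beta(\tilde e)=\tilde e$, since otherwise a new fixed point would appear on $\rho'$. A second application of the same criterion, using that $\tilde\beta(\tilde Z_0)\subset\tilde Z_0$, shows $\tilde e\in\tilde Z\backslash\tilde Z_0$; hence $e=q(\tilde e)\in\Delta(\F)$ and $\rho'=\tilde\beta^\infty(\tilde e)$, giving $\mathscr W=\mathscr W_e$. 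Your argument can be repaired along these lines: instead of proving $\tilde\rho$ legal, pass to the sub-ray beyond the last fixed vertex and run the fixed point criterion there.
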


\begin{proof}
Pick a universal covering $q:\tilde Z\to \tilde Z$ and a lifting $\tilde\beta:\tilde Z\to \tilde Z$ as in Section \ref{subsect. bijective correspondence}. Let $\tilde Z_0:=q^{-1}(Z_0)$.

(1) For any edge $e\in \Delta(\F)$, let $v:=e(0)\in\F$. Recall that there is a non-negative metric $L$ supported on $Z \backslash Z_0$, such that $\beta$ expands $e$ by a factor $\lambda > 1$ with respect to $L$, namely, $L(\beta(e))=\lambda L(e)$, which implies that $\beta(e)$ passes at least two edges of $Z \backslash Z_0$. Property (a) in Theorem \ref{Thm BH} implies both tips of
$\beta(e)$ are in $Z \backslash Z_0$. Furthermore, we have a decomposition of $\beta(e)$ as in Lemma \ref{key lemma: legal path expending}, $\beta(e)=e\cdot\tau_1\cdot\mu_2\cdots \tau_{l-1}\cdot\mu_l$ is a $\beta$-legal path, and
\begin{eqnarray}\label{equaltiy: lifting defines endo on group}
[\beta^2(e)]&=&\beta(e)\cdot[\beta(\tau_1)]\cdot\beta(\mu_2)\cdots [\beta(\tau_{l-1})]\cdot\beta(\mu_l)\nonumber\\
&=&e\cdot\tau_1\cdot\mu_2\cdots \tau_{l-1}\cdot\mu_l\cdot[\beta(\tau_1)]\cdot\beta(\mu_2)\cdots [\beta(\tau_{l-1})]\cdot\beta(\mu_l).\nonumber
\end{eqnarray}
By induction, the common initial segment $[\beta^k(e)]\cap [\beta^{k+1}(e)]=[\beta^k(e)]$ for any $k\in \mathbb{N}$. Therefore, we can define $\beta^{\infty}(e):=\lim_{k\to +\infty}[\beta^k(e)]$. Note that $\beta(\beta^{\infty}(e))=\beta^{\infty}(e)$, so it is a $\beta$-invariant ray in $Z$.

Below we will lift $\beta^{\infty}(e)$ to the universal covering $q:\tilde Z\to \tilde Z$, and then show that the lifting is an attracting fixed point of $\tilde\beta$.

Recall that $\tilde\beta:\tilde Z\to \tilde Z$ is a lifting of $\beta$ such that $\tilde \beta(\tilde v_0)=\tilde v_0$. Since $v$ and $v_0$ are in the same fixed point class $\F$, we have $q^{-1}(v)\cap \fix\tilde \beta\neq \emptyset$. Pick a lifting $\tilde v\in q^{-1}(v)\cap \fix\tilde\beta$ and a lifting $\tilde e$ of $e$ with $\tilde e(0)=\tilde v$.  Then we have a $\tilde \beta$-invariant ray
$$\tilde \beta^{\infty}(\tilde e):=\lim_{k\to +\infty}[\tilde\beta^k(\tilde e)]=\bigcup_{k\to +\infty}[\tilde\beta^k(\tilde e)]: ~~ [0,+\infty]\to \tilde Z$$
which is a lifting of $\beta^{\infty}(e)$, with origin $\tilde v$.

Endow $\tilde Z$ with the lifted metric of $L$ (still written $L$), we have $L([\tilde\beta^k(\tilde e)])=\lambda^kL(\tilde e)$. If we equip $\tilde Z$ with the natural metric $d$ with each edge length $1$, then we have
$$d([\tilde\beta^k(\tilde e)])\geq L([\tilde\beta^k(\tilde e)])/l_0=\lambda^kL(\tilde e)/l_0$$
where $l_0:=\max\{L(e)|e\in \E(Z)\}$ is finite because $Z$ is a finite graph. So the ``bounded cancellation
lemma" (\cite{C} or \cite{DV}) implies that there exists $N>0$ such that for any point $\tilde x\in \tilde Z$,
$$d([\tilde v_0, \tilde x]\cap \tilde \beta^{\infty}(\tilde e))>N \Longrightarrow \lim_{k\to+\infty}d([\tilde v_0, \tilde\beta^k(\tilde x)]\cap \tilde \beta^{\infty}(\tilde e))=+\infty,$$
i.e., the end represented by the $\tilde\beta$-invariant ray $\tilde \beta^{\infty}(\tilde e)$ is an attracting fixed point of $\tilde \beta$ on the space $\partial \tilde Z$. By Assertion $\star$, it defines an attracting fixed word $W_{\tilde e}$ of $\beta_{\pi}$, that is, $\bar j(W_{\tilde e})=\tilde \beta^{\infty}(\tilde e)$.

If $\tilde e'$ is another lifting of $e$ with origin $\tilde v'\in q^{-1}(v)\cap \fix\tilde\beta$, there exists $\omega\in \pi$ such that $\omega(\tilde v)=\tilde v'$. It implies  $\omega\in\fix(\beta_{\pi})$ and the ray $\tilde \beta^{\infty}(\tilde e')=\omega(\tilde \beta^{\infty}(\tilde e))$. So the attracting fixed point $W_{\tilde e'}=\omega W_{\tilde e}$, namely, $W_{\tilde e'}$ and $W_{\tilde e}$ are equivalent and hence represent the same equivalence class of attracting fixed points of $\beta_{\pi}$, which is denoted by $\mathscr W_e$.

If $U$ is an attracting fixed word of $(\beta_0)_{\pi}$, then the ray $\bar j(U)\subset q^{-1}(Z_0)$ and has length $L(\bar j(U))=0$, but any ray representing $\mathscr W_e$ has length $L=\infty$.
So the equivalence class $\mathscr W_e$ can not contain any attracting fixed word of $(\beta_0)_{\pi}$.\\

(2) For two edges $e_i\in \Delta(\F)~(i=1,2)$ with initial points $e_i(0)$ two (possibly the same) fixed points in $\F$, if there exists an indivisible Nielsen path $p=p_1\bar p_2$ as in $\mathrm{Type~3(c)}$ of Theorem \ref{Thm BH} such that $e_i=D\beta(p_i)$ for $i=1,2$, by lifting $p$ to the universal covering $\tilde Z$, we have an indivisible Nielsen path $\tilde p=\tilde p_1\tilde{\bar{p}}_2$ of $\tilde \beta$ with tips $D\tilde \beta(\tilde p_i)=\tilde e_i$ that is a lifting of $e_i$. Moreover, since $\tilde\beta(\tilde p_i)=\tilde p_i\tilde t$ for $\tilde t$ a $\tilde\beta$-legal path, the intersection of the rays $\tilde \beta^{\infty}(\tilde e_1)=\tilde\beta^{\infty}(\tilde p_1)$ and $\tilde \beta^{\infty}(\tilde e_2)=\tilde\beta^{\infty}(\tilde p_2)$ have infinite length, which implies that they represent the same end of $\tilde Z$ and hence the same attracting fixed point of $\tilde\beta$. Therefore, by the above Assertion $\star$, $e_1$ and $e_2$ define the same equivalence class $\mathscr W_{e_1}=\mathscr W_{e_2}$ of attracting fixed words of $\beta_{\pi}$.

Conversely, if the two equivalence classes $\mathscr W_{e_1}$ and $\mathscr W_{e_2}$ of attracting fixed words of $\beta_{\pi}$ are the same, there exists lifting $\tilde e_i$ of $e_i$ such that $W_{\tilde e_1}=W_{\tilde e_2}$. It implies that the intersection $\tilde\rho:=\tilde \beta^{\infty} (\tilde e_1)\cap \tilde \beta^{\infty} (\tilde e_2)$ of the two $\tilde\beta$-invariant rays $\tilde \beta^{\infty} (\tilde e_i)$ has length $L(\tilde \rho)=\infty$. So we can denote $\tilde \beta^{\infty} (\tilde e_i)=\tilde p_i\tilde \rho$, where $\tilde p_i$ is a $\tilde\beta$-legal path with initial edge $\tilde e_i$, and the turn $\{\bar{\tilde p}_1, \bar {\tilde p}_2\}$ is nondegenerate, but the turn $\{D\tilde\beta(\bar{\tilde p}_1), D\tilde\beta(\bar {\tilde p}_2)\}$ is degenerate. So $\tilde p_1\bar{\tilde p}_2$ equals the tightened path $[\tilde \beta(\tilde p_1\bar{\tilde p}_2)]$ and hence is a Nielsen path of $\tilde\beta$. Furthermore, since $\tilde \beta$ expands $\tilde p_i$ by the factor $\lambda>1$ w.r.t the metric $L$, the initial point $\tilde p_i(0)$ is the unique fixed point in $\tilde p_i$. It follows that the Nielsen path $\tilde p_1\bar{\tilde p}_2$ is indivisible, and hence $p_1\bar p_2:=q(\tilde p_1\bar{\tilde p}_2)$ is an indivisible Nielsen path satisfying the property in claim (2).\\

(3) For any equivalence class $\mathscr W$ of attracting fixed words with property as in claim (3),
by the above Assertion $\star$, there exists $W\in \mathscr W$ such that $\bar j(W)\in \partial \tilde Z$ is an attracting fixed point of $\tilde \beta$. Suppose $\rho: [0,+\infty]\to \tilde Z$ is a ray representing $\bar j(W)$ with origin $\rho(0)=\tilde v_0$ (see Figure \ref{fig: ray} below). Since $\bar j(W)$ is attracting (under the metric $d$ on $\tilde Z$ that every edge has length $1$),  the set $S_{\rho}:=\rho\cap\fix\tilde \beta$ of $\tilde\beta$-fixed points in $\rho$ is finite. Indeed, if $S_{\rho}$ is infinite, there exists an infinite sequence $\tilde v_i\in S_{\rho}$ such that the length $d([\tilde v_0,\tilde v_i])$ of $\tilde\beta$-Nielsen path goes to infinity as $i\to +\infty$, but for any given $\tilde v_i\in S_{\rho}$, $d([\tilde v_0,\tilde\beta^k(\tilde v_i)]\cap\rho)=d([\tilde v_0,\tilde v_i])$ does not go to infinity as $k\to +\infty$, contradicting that $\rho$ is attracting.

Let $\tilde v'$ be the last one in $S_{\rho}$, that is
$$d([\tilde v_0,\tilde v'])=\max\{d([\tilde v_0,\tilde v_i])\mid\tilde v_i\in S_{\rho}\}.$$
Let $\rho'$ denote the sub-ray of $\rho$ with fixed origin $\rho'(0)=\tilde v'$, and let $\tilde e$ be the initial oriented edge of $\rho'$ with $\tilde e(0)=\tilde v'$. Note that $\rho'$ also represents the equivalence class $\mathscr W$ of attracting fixed points of $\tilde \beta$. In the tree $\tilde Z$, if $D\tilde\beta(\tilde e)\neq \tilde e$, then any two points $x,y\in \rho'$ sufficiently close to $\tilde v'$ and $\partial \tilde Z$ respectively satisfy the hypothesis of the fixed point criterion (see the proof of Proposition \ref{empty fpc has ichr 1}), and hence $\tilde \beta$ has a fixed point in the path $[x,y]\subset \rho'$, that is, $\tilde \beta$ has another fixed point but not $\tilde v'$ in $\rho'$, contradicting with the choice of $\tilde v'$. Therefore, we have $D\tilde\beta(\tilde e)=\tilde e$ and $[\tilde \beta(\rho')]=\rho'$.

Below we will show that $\tilde e\subset \tilde Z\backslash\tilde Z_0$.

Recall that the equivalence class $\mathscr W$ represented by $\rho'$ does not contain any fixed word in $\partial \pi_1(Z_0)$, then $\rho'$ contains at least one edge in $\tilde Z\backslash\tilde Z_0$. Let $\tilde e'\in\E(\tilde Z\backslash\tilde Z_0)$ be the first one along $\rho'$ from its origin $\tilde v'$, that is
 $$d([\tilde v', \tilde e'(0)]):=\min\{d([\tilde v', \tilde e_i(0)])\mid\tilde e_i\in \rho'\cap\E(\tilde Z\backslash\tilde Z_0)\}.$$
It follows that the (possibly trivial) path $\rho_1:=[\tilde v', \tilde e'(0)]\subset \tilde Z_0$, and $\tilde e'$ is not contained in $\tilde\beta(\rho_1)\subset\tilde Z_0$.

Now we claim that $\rho_1$ is trivial. Otherwise, if $\rho_1$ is nontrivial, namely, $\tilde e\subset \tilde Z_0$ and $\tilde e\neq \tilde e'$. Write $\rho'=\rho_1\rho_2$ for $\rho_2=\rho'\backslash\rho_1$ the sub-ray of $\rho'$ with origin $\rho_2(0)=\tilde e'(0)$. Since $\tilde v'$ is the only fixed point in $\rho'$ and $\tilde\beta(\rho_1)$ does not contain the edge $\tilde e'$, we have $\tilde\beta(\rho_2)$ contains $\rho_2$ as a proper sub-ray, that is, $\tilde\beta(\tilde e'(0))$ is not in the same connected component of $\tilde Z\backslash \tilde e'(0)$ as $\tilde e'(1)$. Then $\tilde e'(0)$ and any point $y\in \rho'$ sufficiently close to $\partial \tilde Z$ satisfy the hypothesis of the fixed point criterion, and $\tilde \beta$ has a fixed point in the path $[\tilde e'(0),y]\subset\rho'$. It contradicts that $\tilde v'$ is the unique fixed point in $\rho'$.

Therefore, $\rho_1$ is trivial, that is, $\tilde e=\tilde e'\subset \tilde Z\backslash\tilde Z_0$, then $\rho'=\tilde\beta^{\infty}(\tilde e)=\bar j(W_{\tilde e})$. It follows $\mathscr W=\mathscr W_e$ for $e=q(\tilde e)\in \Delta(\F)$, so claim (3) holds.

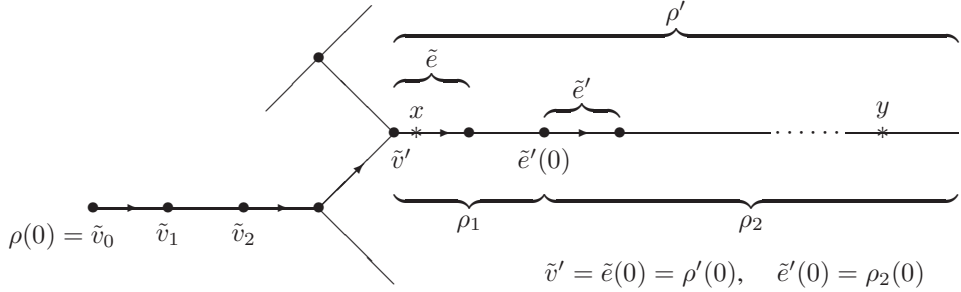
\begin{figure}[h]
\begin{center}
\setlength{\unitlength}{1mm}
\begin{picture}(115,40)(-45,-20)
\put(0,0){\line(1,0){50}}
\put(60,0){\line(1,0){15}}
\put(55.25,0){\makebox(0,0)[cc]{$\cdots\cdots$}}
\put(0,0){\line(-1,-1){10}}
\put(0,0){\line(-1, 1){10}}
\put(-17,3){\line(1, 1){14}}
\put(-40,-10){\line(1,0){30}}
\put(-10,-10){\line(1,-1){10}}
\put(0,0){\vector(1,0){7.6}}
\put(20,0){\vector(1,0){6}}
\put(-10,-10){\vector(1,1){6}}
\put(-20,-10){\vector(1,0){6}}
\put(-40,-10){\vector(1,0){6}}
\put(-40,-10){\makebox(0,0)[cc]{$\bullet$}}
\put(-30,-10){\makebox(0,0)[cc]{$\bullet$}}
\put(-20,-10){\makebox(0,0)[cc]{$\bullet$}}
\put(-10,-10){\makebox(0,0)[cc]{$\bullet$}}
\put(0,0){\makebox(0,0)[cc]{$\bullet$}}
\put(-10,10){\makebox(0,0)[cc]{$\bullet$}}
\put(10,0){\makebox(0,0)[cc]{$\bullet$}}
\put(20,0){\makebox(0,0)[cc]{$\bullet$}}
\put(30,0){\makebox(0,0)[cc]{$\bullet$}}
\put(3,0){\makebox(0,0)[cc]{$\ast$}}
\put(65,0){\makebox(0,0)[cc]{$\ast$}}
\put(-37,-12){\makebox(0,0)[rt]{$\rho(0) = \tilde v_0$}}
\put(-30,-12){\makebox(0,0)[ct]{$\tilde v_1$}}
\put(-20,-12){\makebox(0,0)[ct]{$\tilde v_2$}}
\put(1,-2){\makebox(0,0)[ct]{$\tilde v'$}}
\put(3,2){\makebox(0,0)[cb]{$x$}}
\put(65,2){\makebox(0,0)[cb]{$y$}}
\put(20,-2){\makebox(0,0)[ct]{$\tilde e'(0)$}}
\put(5,7){\makebox(0,0)[cc]{$\overbrace{\hspace{1cm}}$}}
\put(5,10){\makebox(0,0)[cc]{$\tilde e$}}
\put(25,3){\makebox(0,0)[cc]{$\overbrace{\hspace{1cm}}$}}
\put(25,6){\makebox(0,0)[cc]{$\tilde e'$}}
\put(37.5,13){\makebox(0,0)[cc]{$\overbrace{\hspace{7.5cm}}$}}
\put(37.5,16){\makebox(0,0)[cc]{$\rho'$}}
\put(10,-9){\makebox(0,0)[cc]{$\underbrace{\hspace{2cm}}$}}
\put(10,-12){\makebox(0,0)[cc]{$\rho_1$}}
\put(47.5,-9){\makebox(0,0)[cc]{$\underbrace{\hspace{5.5cm}}$}}
\put(47.5,-12){\makebox(0,0)[cc]{$\rho_2$}}
\put(20,-17){\makebox(0,0)[lt]{$\tilde v'= \tilde e(0) = \rho'(0)$, \quad $\tilde e'(0)=\rho_2(0)$}}
\end{picture}
\end{center}
\caption{The ray $\rho$ and its sub-ray $\rho'$}\label{fig: ray}
\end{figure}
\end{proof}

\subsection{RTT map of Type 1 and Type 2}\label{subsect. RTT type 1 and 2}

Let $\beta:(Z,Z_0)\to (Z,Z_0)$ be an RTT map, and $\beta_0:=\beta|_{Z_0}$.
If $\beta$ is of Type 1, then $\beta_{\pi}|_{\pi_1(Z_0, v)}=(\beta_0)_{\pi}:\pi_1(Z_0, v)\to \pi_1(Z_0, v)$ and $\beta_{\pi}(\pi_1(Z,v))\subset\pi_1(Z_0, v)$.
So $\fix(\beta_{\pi})=\fix(\beta_0)_{\pi}$ and $a(\beta_{\pi})=a(\beta_0)_{\pi}$ immediately.

For an RTT map of Type 2, we have

\begin{lem}\label{lem. RTT typer 2}
Let $\beta:(Z,Z_0)\to (Z,Z_0)$ be an RTT map of Type 2. Then every attracting fixed word of $\beta_{\pi}$ is equivalent to a one of $(\beta_0)_{\pi}$.
\end{lem}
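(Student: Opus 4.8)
The plan is to run the same end-theoretic machine used for Type~3 in the proof of Lemma~\ref{key lemma: expending edges are infinite words}, but to exploit that a Type~2 top stratum is \emph{permuted rather than expanded}, so that an invariant attracting ray is forced to live essentially inside $\tilde Z_0$. Fix a universal covering $q:\tilde Z\to\tilde Z$ and a lifting $\tilde\beta$ fixing a lift $\tilde v_0$ of a point $v_0\in\F$, so that by Assertion~$\star$ every attracting fixed word $W$ of $\beta_\pi$ corresponds to an attracting fixed end $\bar j(W)$ of $\tilde\beta$, represented by a ray $\rho$ from $\tilde v_0$. Exactly as in part~(3) of that proof, the set of $\tilde\beta$-fixed points on $\rho$ is finite; letting $\tilde v'$ be the last one and $\rho'$ the sub-ray from $\tilde v'$, the fixed point criterion forces $D\tilde\beta(\tilde e_1)=\tilde e_1$ for the initial edge $\tilde e_1$ of $\rho'$ and $[\tilde\beta(\rho')]=\rho'$. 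Since $D\tilde\beta(\tilde e_1)=\tilde e_1$, the tightened iterates $[\tilde\beta^k(\tilde e_1)]$ form an increasing sequence of initial segments of $\rho'$ with $\rho'=\lim_{k\to\infty}[\tilde\beta^k(\tilde e_1)]$.

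The heart of the argument is a counting step on these iterates. Because $Z_0$ is $\beta$-invariant, $\tilde\beta$ carries each edge of $\tilde Z_0$ to a path in $\tilde Z_0$; because the stratum $Z\setminus Z_0$ is \emph{cyclically permuted}, the top-stratum transition matrix is a permutation matrix, so $\tilde\beta$ carries each oriented edge of $\tilde Z\setminus\tilde Z_0$ to a path containing exactly one edge of $\tilde Z\setminus\tilde Z_0$. Thus $\tilde\beta$ \emph{preserves} the number of top edges of any path: each top edge contributes exactly one top edge to its image and each $\tilde Z_0$-edge contributes none. Hence every $[\tilde\beta^k(\tilde e_1)]$ has the same number of top edges as $\tilde e_1$ — namely $0$ if $\tilde e_1\subset\tilde Z_0$ and $1$ otherwise (a lone top edge has no inverse partner with which to cancel during tightening). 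Moreover, when $\tilde e_1$ is a top edge, $D\tilde\beta(\tilde e_1)=\tilde e_1$ pins the unique top edge to the front, giving $[\tilde\beta(\tilde e_1)]=\tilde e_1\cdot(\text{path in }\tilde Z_0)$ and, inductively, $[\tilde\beta^k(\tilde e_1)]=\tilde e_1\cdot(\text{path in }\tilde Z_0)$. Passing to the limit, $\rho'$ contains at most one top edge, necessarily its initial edge, so its end is represented by a ray $\rho_0$ lying entirely inside one component of $\tilde Z_0$. Thus $\bar j(W)\in\partial\tilde Z_0$ is a $\tilde\beta$-attracting fixed end carried by $\beta_0$.

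It remains to package $\rho_0$ as an attracting fixed word of $(\beta_0)_\pi$ equivalent to $W$. Since $\tilde v'$ is a $\tilde\beta$-fixed lift, there is $U\in\fix(\beta_\pi)$ with $U\tilde v_0=\tilde v'$, and $U$ commutes with $\tilde\beta$; translating by $U^{-1}$ moves $\rho'$ to a $\tilde\beta$-invariant attracting ray from $\tilde v_0$ representing $U^{-1}W$, whose end lies in $\tilde Z_0$. When $\tilde e_1\subset\tilde Z_0$ this ray lies in the component of $\tilde v_0$ and directly exhibits $U^{-1}W$ as an attracting fixed word of $(\beta_0)_\pi$, so $W=U(U^{-1}W)$ is equivalent to it. The genuinely delicate case — and the main obstacle — is the remaining one, in which $\rho'$ begins with a single top edge (this is exactly the ``$e$ gets initially expanded along itself on one tip and shrinks on the other'' situation flagged in Theorem~\ref{Thm BH}); here one must \emph{shed} that initial top edge to expose the $\tilde Z_0$-ray $\rho_0$ and then check, using the $\fix(\beta_\pi)$-action on the components of $\tilde Z_0$, that the resulting $\beta_0$-attracting end is equivalent to a bona fide attracting fixed word of $(\beta_0)_\pi$ based at $v_0$. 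The conceptual content — that Type~2 strata create no new attracting directions — is already secured by the counting step; this final identification is the careful bookkeeping that makes ``equivalent to a one of $(\beta_0)_\pi$'' literally true.
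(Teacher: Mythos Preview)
Your setup --- lifting to $\tilde Z$, invoking Assertion~$\star$, locating the last fixed point $\tilde v'$ on $\rho$, and passing to the sub-ray $\rho'$ with $D\tilde\beta(\tilde e_1)=\tilde e_1$ --- is exactly what the paper does. The divergence, and the gap, is in what comes next.

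The paper dispatches both cases in one stroke. If $\tilde e_1$ were a top edge, then (because Type~2 \emph{cyclically} permutes the top edges) $D\beta(q(\tilde e_1))=q(\tilde e_1)$ forces $Z\setminus Z_0$ to consist of a single edge, and $\beta$ carries that edge identically to itself; thus $\tilde\beta(\tilde e_1)=\tilde e_1$ and $\tilde e_1(1)$ is a second fixed point on $\rho'$, contradicting the choice of $\tilde v'$. So your ``genuinely delicate case'' is in fact \emph{vacuous}: it never occurs, and there is nothing to shed and no bookkeeping to do. The paper then rules out any later top edge $\tilde e'$ on $\rho'$ by reusing the fixed-point-criterion argument from part~(3) of Lemma~\ref{key lemma: expending edges are infinite words}: since the initial segment $\rho_1=[\tilde v',\tilde e'(0)]$ lies in $\tilde Z_0$ and hence $\tilde\beta(\rho_1)\subset\tilde Z_0$ does not cross $\tilde e'$, the point $\tilde e'(0)$ and a far-out point on $\rho'$ satisfy the criterion, producing a second fixed point on $\rho'$ --- contradiction. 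Hence $\rho'\subset\tilde Z_0$ outright.

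Your counting route, by contrast, rests on the claim that the tightened iterates $[\tilde\beta^k(\tilde e_1)]$ form an increasing sequence of initial segments of $\rho'$ with $\rho'=\lim_{k\to\infty}[\tilde\beta^k(\tilde e_1)]$. In Type~3 this exhaustion is driven by the expansion factor $\lambda>1$, but Type~2 has no expansion: nothing you have said prevents the iterates from staying bounded (and in the single-top-edge situation you yourself flag, $\tilde\beta(\tilde e_1)=\tilde e_1$, so they never grow at all). It is not even clear that $[\tilde\beta^k(\tilde e_1)]$ is an initial segment of $\rho'$, since $\tilde\beta(\tilde e_1(1))$ need not lie on $\rho'$. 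Without exhaustion, the top-edge count on the iterates tells you nothing about $\rho'$ itself, so the step ``passing to the limit, $\rho'$ contains at most one top edge'' is unjustified. The repair is to drop the counting/limit argument and argue directly with the fixed point criterion at the first hypothetical top edge, as the paper does; this is both simpler and closes the proof without any residual case.
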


\begin{proof}
Pick a lifting $\tilde\beta:\tilde Z\to \tilde Z$ as in Section \ref{subsect. RTT type 3}.

For an attracting fixed word $W$ of $\beta_{\pi}$, by the above Assertion $\star$, there exists an attracting fixed point $\bar j(W)\in \partial \tilde Z$ of $\tilde \beta$. Suppose $\rho: [0,+\infty]\to \tilde Z$ is a ray representing $\bar j(W)$ with origin $\rho(0)=\tilde v_0$. Since $\bar j(W)$ is attracting (under the metric $d$ on $\tilde Z$ that every edge has length $1$), as in the proof of claim (3) in Lemma \ref{key lemma: expending edges are infinite words}, we can show the set $S_{\rho}:=\rho\cap\fix\tilde \beta$ is finite, and let $\rho'$ be the sub-ray of $\rho$ such that the origin $\rho'(0)=\tilde v'$ is the only fixed point in $\rho'$.

Let $\tilde e$ be the initial oriented edge of $\rho'$ with $\tilde e(0)=\tilde v'$,  as in the proof of claim (3) in Lemma \ref{key lemma: expending edges are infinite words}, we also have $D\tilde\beta(\tilde e)=\tilde e$ and $[\tilde \beta(\rho')]=\rho'$. Recall that $\beta$ is an RTT of Type 2, if $\tilde e\subset\tilde Z\backslash\tilde Z_0$, then $Z\backslash Z_0$ has only one edge $q(\tilde e)$, and hence $\tilde\beta(\tilde e)=\tilde e$, contradicting that the origin $\tilde v'$ is the only fixed point in $\rho'$. Therefore, $\tilde e\subset \tilde Z_0$.

If $\rho'\not\subset \tilde Z_0$, it contains an oriented edge $\tilde e'\subset\tilde Z\backslash \tilde Z_0$. Since $\tilde \beta(\tilde e')\subset\tilde Z\backslash \tilde Z_0$ and $\tilde \beta(\tilde Z_0)\subset \tilde Z_0$, by the same argument as in the proof of claim (3) in Lemma \ref{key lemma: expending edges are infinite words}, we can show
$\tilde \beta$ has another fixed point but not $\tilde v'$ in $\rho'$, contradicting that $\tilde v'$ is the only fixed point in $\rho'$. Therefore, the sub-ray $\rho'\subset \tilde Z_0$,
and represents an attracting fixed point $\bar j(W')$ of $\tilde\beta_0$ for $W'$ an attracting fixed word of $(\tilde\beta_0)_{\pi}$. It follows that the attracting fixed point $W$ of $\tilde \beta$ is equivalent to $W'$. So Lemma \ref{lem. RTT typer 2} is true.
\end{proof}

\subsection{Property of $\ichr(\F)$ of RTT map}

In this subsection, we give some relations between $\ichr(\beta_0,\F_0)$ and $\ichr(\beta,\F)$.

\begin{prop}\label{key prop. on ichr}
Let $\beta:(Z,Z_0)\to (Z,Z_0)$ be an RTT map as in Thereom \ref{Thm BH}, and $\beta_0:=\beta|_{Z_0}$. If there exists an indivisible $\beta$-Nielsen
path $p$ that intersects $Z \backslash Z_0$, it is unique. (In Type 2 when $Z \backslash Z_0$ is a single edge $e$,
take $p=e$, and consider that $e$ gets initially expanded
along itself on one tip and shrinks on the other.) There are three possible cases.

$\mathrm{(i)}$ No such path $p$ exists (as always in Type 1, in Type 2 when $Z \backslash Z_0$ has more
than one edge, and possibly in Type 3). Then the $\beta$-fixed point classes are the
same as the $\beta_0$-fixed point classes, and
$$\rk(\beta, \F_0)=\rk(\beta_0,\F_0),\quad a(\beta, \F_0)=a(\beta_0,\F_0)+\delta(\F_0)$$
for all $\beta_0$-fixed point classes $\F_0$.

$\mathrm{(ii)}$ The path $p$ connects two different $\beta_0$-fixed point classes $\F'_1$
and $\F'_2$. Then the $\beta$-fixed point classes are the same as the $\beta_0$-fixed point classes, except
that $\F'_1$ and $\F'_2$ combine into a single $\beta$-fixed point class $\F'=\F'_1\cup \F'_2$. We have
$$\rk(\beta, \F')=\rk(\beta_0,\F'_1)+\rk(\beta_0,\F'_2), ~~a(\beta, \F')=a(\beta_0,\F'_1)+a(\beta_0,\F'_2)+\delta(\F')-1,$$
and for each $\beta$-fixed point class $\F_0\neq \F'$,
$$\rk(\beta, \F_0)=\rk(\beta_0,\F_0),\quad a(\beta, \F_0)=a(\beta_0,\F_0)+\delta(\F_0).$$

$\mathrm{(iii)}$ The path $p$ has both endpoints in a $\beta_0$-fixed point class $\F'_0$. Then the $\beta$-fixed point
classes are the same as the $\beta_0$-fixed point classes, and
$$\rk(\beta, \F'_0)=\rk(\beta_0,\F'_0)+1, \quad a(\beta, \F'_0)=a(\beta_0,\F'_0)+\delta(\F'_0)-1,$$
and for each $\beta$-fixed point class $\F_0\neq \F'_0$,
$$\rk(\beta, \F_0)=\rk(\beta_0,\F_0),\quad a(\beta, \F_0)=a(\beta_0,\F_0)+\delta(\F_0).$$
\end{prop}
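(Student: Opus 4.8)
The plan is to regard the description of how the fixed point classes combine, together with the three \emph{rank} formulas, as the bookkeeping already present in the stabilizer analysis of \cite{JWZ}, and to devote the real work to the formulas for $a(\beta,\cdot)$. Throughout I fix a universal covering $q:\tilde Z\to \tilde Z$ and a lift $\tilde\beta$ as in Section~\ref{subsect. bijective correspondence}, so that Assertion~$\star$ identifies attracting fixed words of $\beta_\pi$ with attracting fixed ends of $\tilde\beta$ in $\partial\tilde Z$. By $\mathrm{Type~3(b)}$ of Theorem~\ref{Thm BH} there is at most one indivisible $\beta$-Nielsen path $p$ meeting $Z\setminus Z_0$, and it is this path that dictates the trichotomy. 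For the rank formulas one views the stabilizer $\fix(\beta_\pi)$ as the fundamental group of the fixed subforest of $\tilde\beta$: passing from $\beta_0$ to $\beta$, the only change is the adjunction of (lifts of) $p$, which is either absent (case $\mathrm{(i)}$), joins two distinct fixed subforests without creating a loop (case $\mathrm{(ii)}$, so the ranks add), or closes up a loop inside one class (case $\mathrm{(iii)}$, so the rank grows by $1$); this is exactly the Euler-characteristic accounting of \cite{JWZ}.

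For $a(\beta,\F)$ I first partition the set of equivalence classes of attracting fixed words of $\beta_\pi$ lying over a nonempty $\beta$-class $\F$ into \emph{old} classes, those containing an attracting fixed word of $(\beta_0)_\pi$, and \emph{new} classes, those containing none. By parts $\mathrm{(1)}$ and $\mathrm{(3)}$ of Lemma~\ref{key lemma: expending edges are infinite words} the new classes are precisely the $\mathscr W_e$ with $e\in\Delta(\F)$. To count them I invoke part $\mathrm{(2)}$ of the same lemma: for distinct $e,e'\in\Delta(\F)$ one has $\mathscr W_e=\mathscr W_{e'}$ if and only if $e,e'$ are the two initial edges $D\beta(p_1),D\beta(p_2)$ of a single indivisible Nielsen path $p=p_1\bar p_2$. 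Combined with the uniqueness of $p$, this yields the new count at once: if no such $p$ exists (case $\mathrm{(i)}$) all the $\mathscr W_e$ are distinct and there are $\delta(\F)$ of them, while if $p$ is present (cases $\mathrm{(ii)}$ and $\mathrm{(iii)}$) it produces exactly one coincidence $e_1\sim e_2$ and the new count drops to $\delta(\F)-1$.

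It remains to count the old classes, and here the inclusion $\iota:Z_0\hookrightarrow Z$, an inj-morphism from $\beta_0$ to $\beta$, is the main tool: the Morphism property (Fact~2) furnishes an injection $\bar\iota_\pi:\mathscr A(\beta_0)\hookrightarrow\mathscr A(\beta)$ whose image is exactly the old classes. In case $\mathrm{(i)}$ one has $\fix(\beta_\pi)=\fix((\beta_0)_\pi)$, the equivalence relation is unchanged, and the old count is $a(\beta_0,\F_0)$; together with the new count this gives $a(\beta,\F_0)=a(\beta_0,\F_0)+\delta(\F_0)$, and the same reasoning applies verbatim to every $\beta$-class not met by $p$. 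In case $\mathrm{(iii)}$ the stabilizer acquires the extra generator $[p]$, but its axis in $\tilde Z$ has ideal endpoints equal to the two new words $\mathscr W_{e_1}=\mathscr W_{e_2}$, so this generator fuses no old class and the old count is still $a(\beta_0,\F'_0)$. In case $\mathrm{(ii)}$ the classes $\F'_1$ and $\F'_2$ merge, and one checks that the old words inherited from $\F'_1$ and from $\F'_2$ remain in distinct $\fix(\beta_\pi)$-orbits, so the old count is $a(\beta_0,\F'_1)+a(\beta_0,\F'_2)$. Adding the new counts recovers the stated formulas.

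The step I expect to be genuinely delicate is the old count in cases $\mathrm{(ii)}$ and $\mathrm{(iii)}$, namely verifying two things that the word metric obscures but the geometry of Assertion~$\star$ clarifies: first, that an attracting fixed word of $(\beta_0)_\pi$ really does persist as an attracting fixed word of $\beta_\pi$ (its defining ray stays in $\tilde Z_0$, where $\tilde\beta=\tilde\beta_0$, while nearby rays escaping into $\tilde Z\setminus\tilde Z_0$ are pushed away by the $\lambda$-expansion, so attraction is preserved); and second, that enlarging the acting group from $\fix((\beta_0)_\pi)$ to $\fix(\beta_\pi)$ identifies no two distinct old classes, because the directions introduced by $p$ and by the expansion of $Z\setminus Z_0$ are transverse to the old words sitting in $\partial\tilde Z_0$. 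Lemma~\ref{lem. RTT typer 2} and the Type~1 discussion dispose of the degenerate RTT types, so once these two geometric points are secured the three cases follow by the bookkeeping above.
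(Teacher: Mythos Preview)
Your strategy is the paper's own: defer the fixed-point-class combinatorics and the $\rk$ formulas to \cite{JWZ}, split the attracting-word classes over a $\beta$-class $\F$ into ``old'' (containing a word of $(\beta_0)_\pi$) and ``new'', count the new ones via Lemma~\ref{key lemma: expending edges are infinite words} (with the single coincidence from $p$ when it is present), and show the old count is unchanged. Lemma~\ref{lem. RTT typer 2} handles Types~1 and 2 exactly as you say.

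There is one concrete slip in your case~$\mathrm{(iii)}$ argument: the ideal endpoints of the axis of the new generator $[p]$ are the limits $[p]^{\pm\infty}\in\partial(\fix\beta_\pi)$, and by the proposition in Section~\ref{subsect. Attracting fixed point} these are fixed but \emph{not} attracting; they are certainly not the new classes $\mathscr W_{e_i}$. So ``the axis has ideal endpoints equal to the new words'' is false and cannot be the reason no old classes collapse. The paper replaces this (and covers your ``delicate'' step uniformly) with a one-line algebraic observation: $\pi_1(Z_0,v)$ is a free factor of $\pi_1(Z,v)$, so if $W,V\in\partial\pi_1(Z_0,v)$ and $W=\omega V$ with $\omega\in\pi_1(Z,v)$, then necessarily $\omega\in\pi_1(Z_0,v)$; hence $\omega\in\fix(\beta_v)\cap\pi_1(Z_0,v)=\fix((\beta_0)_v)$, and the map $\mathscr A((\beta_0)_v)\to\mathscr A(\beta_v)$ is injective regardless of how the stabilizer grew. (Your appeal to Fact~2 alone is not enough here, since Fact~2 gives injectivity on $\mathscr A$ only when the stabilizers coincide.) For case~$\mathrm{(ii)}$ one further short argument is needed to see that the two injections $\bar\iota_a,\bar\iota_b$ have disjoint images in $\mathscr A(\beta_a)$: an overlap would produce, via the lifted Nielsen path $\tilde p$, a $\tilde\beta$-Nielsen path inside $\tilde Z_0$ joining lifts of $a\in\F'_1$ and $b\in\F'_2$, contradicting that $\F'_1,\F'_2$ are distinct $\beta_0$-classes.
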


\begin{proof}
The proof of the equalities of $a(\cdot)$ relies mainly on Lemma \ref{key lemma: expending edges are infinite words} and Lemma \ref{lem. RTT typer 2}. Although the equalities of $\rk(\cdot)$ have been proved in \cite[Corollary BH]{JWZ}, to be referenced easily, we state them in the following.

At first, let's review some notations.

For a fixed point $v$ of $\beta$, as in Definition \ref{defn of fpc by route}, let $\beta_v: \pi_1(Z,v)\to \pi_1(Z,v)$ be the induced natural injective endomorphism and $(\beta_0)_v=\beta_v|_{\pi_1(Z_0,v)}: \pi_1(Z_0,v)\to\pi_1(Z_0,v)$. For $\phi\in\{\beta_v, (\beta_0)_v\}$, let $\mathcal A(\phi)$ be the set of attracting fixed words of $\phi$. The set $\mathscr A(\phi)=\mathcal A(\phi)/\sim$ is the equivalence classes of attracting fixed words of $\phi$ (where $W, V\in \mathcal A(\phi)$ are equivalent if and only if $W=\omega V$ for some $\omega\in \fix(\phi)$). Clearly, the inclusion $\iota: (Z_0, v)\hookrightarrow (Z,v)$ is an inj-morphism from $\beta_0: (Z_0, v)\to (Z_0, v)$ to $\beta: (Z, v)\to (Z, v)$, then by Fact 2 in Section \ref{subsect. invariance}, $\fix(\beta_0)_v\subset \fix(\beta_v)$ and $\mathcal A(\beta_0)_v\subset\mathcal A(\beta_v)$.

Moreover, for two attracting fixed words $W, V\in \mathcal A(\beta_0)_v$ that are in the same equivalence class in $\mathscr A(\beta_v)$, i.e., $W=\omega V$ for some $\omega\in \fix(\beta_v)$, since $W, V\in \partial\pi_1(Z_0,v)$ and $\pi_1(Z,v)=\pi_1(Z_0,v)\ast G$ for $G$ a free subgroup of $\pi_1(Z,v)$, we have $\omega\in \fix(\beta_v)\cap\pi_1(Z_0, v)=\fix(\beta_0)_v$. So $W, V$ are also in the same equivalence class in $\mathscr A(\beta_0)_v$. Therefore, the inclusion $\mathcal A(\beta_0)_v\subset\mathcal A(\beta_v)$ induces an injection
\begin{equation}\label{inclusion induces injection}
\bar\iota_v: \mathscr A(\beta_0)_v\hookrightarrow \mathscr A(\beta_v).
\end{equation}

Recall that $a(\beta,\F)=a(\beta_v)=\#\mathscr A(\beta_v)$ for $v\in\F$.  \\

Case (i) Since every Nielsen path is a product
of indivisible Nielsen paths, it lies in $Z_0$. So for any $v\in \F_0$, $\fix(\beta_v)=\fix(\beta_0)_v$ and $\rk(\beta,\F_0)=\rk(\beta_0,\F_0)$.

If $\beta$ is of Type 1 or Type 2, then $\delta(\F_0)=0$ and $\mathscr A(\beta_v)=\mathscr A(\beta_0)_v$ by Lemma \ref{lem. RTT typer 2};
if $\beta$ is of Type 3, then by Lemma \ref{key lemma: expending edges are infinite words}, the map $e\mapsto \mathscr W_e$ defines a bijection from $\Delta(\F_0)$ to $\mathscr A(\beta_v)\backslash\mathscr A(\beta_0)_v$. In either case, we would have $a(\beta, \F_0)=a(\beta_0,\F_0)+\delta(\F_0)$.\\

Case (ii) For the  equalities of $\F_0$, the situation is the same as the one in Case (i).
Now we consider $\F'=\F'_1\cup \F'_2$. Suppose the path $p$ goes from $a \in\F'_1$ to $b \in \F'_2$.

For the $\rk(\beta,\F')$ equation it suffices to show that the natural homomorphism
$$\eta: \fix(\beta_0)_a\ast \fix(\beta_0)_b\to \fix(\beta_a), \quad [u]\mapsto [u], ~~[v]\mapsto p_\#[v]:=[pv\bar p],$$
is an isomorphism, where $u$ and $v$ are Nielsen paths in $Z_0$ at $a$ and
$b$, respectively, and $[\cdot]$ denotes loop class.
First observe that $\eta$ is injective. In fact, the $\eta$-image of any nontrivial element of the
left hand side is represented by a product $w=u_1pv_1\bar p\cdots u_lpv_l\bar pu_{l+1}$, where $u_i$
and $v_i$ are Nielsen paths in $Z_0$ at $a$ and $b$, respectively. (We allow that $u_1$ and $u_{l+1}$
be trivial, but assume the others are nontrivial.) By Property of Type 2 or 3(c) in Theorem \ref{Thm BH},
both tips of p are in $Z\backslash Z_0$, so $w$ is an immersed Nielsen path and represents a
nontrivial element in the left hand side.

On the other hand, any nontrivial element of the right hand side is represented by a
concatenation of $\beta$-Nielsen paths, hence by a product like the $w$ above, so $\eta$ is
surjective. Thus the desired isomorphism is established.\\

Below we prove the equalities of $a(\beta,\F')$.

By the bijective correspondence Assertion $\star$, two attracting fixed words $W\in \mathcal A(\beta_0)_a$ and $W'\in \mathcal A(\beta_0)_b$ can be represented by two rays $\rho, \rho'\subset\tilde Z_0:=q^{-1}(Z_0)\subset\tilde Z$ with origin $\tilde a, \tilde b\in \fix\tilde \beta$, respectively. So the inclusions induce two injections
$$\bar \iota_a: \mathscr A(\beta_0)_a\hookrightarrow \mathscr A(\beta_a), ~~~\mathscr W_\rho\mapsto \mathscr W_\rho,$$
$$\bar \iota_b: \mathscr A(\beta_0)_b\hookrightarrow \mathscr A(\beta_a),~~~\mathscr W_{\rho'}\mapsto \mathscr W_{\tilde p\rho'},$$
where $\mathscr W_\rho$ denotes the equivalence class of the attracting fixed word represented by $\rho$, and $\bar \iota_b$ can be thought as a composition $\mathscr A(\beta_0)_b\hookrightarrow \mathscr A(\beta_b)\longleftrightarrow \mathscr A(\beta_a)$, which is well-defined because the lifiting $\tilde p$ of $p$ is a $\tilde \beta$-Nielsen path from $\tilde a$ to $\tilde b$ and hence $\beta(\tilde p\rho')=\tilde p\rho'$ represents an attracting fixed word in $\mathcal A(\beta)_a$.

Furthermore, we claim $\bar\iota_a(\mathscr A(\beta_0)_a) \cap\bar\iota_b(\mathscr A(\beta_0)_b)=\emptyset$. It implies an injection
\begin{equation}\label{equa. injction}
\bar\iota:=\bar\iota_a\sqcup \bar\iota_b: \mathscr A(\beta_0)_a\sqcup \mathscr A(\beta_0)_b\hookrightarrow \mathscr A(\beta_a).
\end{equation}

Otherwise, there exist rays $\rho, \rho'\subset\tilde Z_0$ with $\tilde\beta$-fixed origins $\tilde a\in q^{-1}(a)$ and $\tilde b\in q^{-1}(b)$ respectively, such that $\rho$ and $\tilde p\rho'$ represent two equivalent attracting fixed words in $\mathcal A(\beta_a)$, that is, there exist an word $w\in \fix(\beta_a)$, such that the two rays $\rho$ and $w(\tilde p\rho')$ represent the same end in $\partial\tilde Z$.
It implies that $\gamma:=\rho\cap w(\tilde p\rho')\subset\tilde Z_0$ is a ray. Note that $w(\tilde p\rho')=w(\tilde p)\cdot w(\rho')$ and $\tilde p$ is a lifting of the indivisible Nielsen path $p$, then the two tips of $w(\tilde p)\subset \tilde Z\backslash\tilde Z_0$ and $\gamma\subset w(\rho')\subset\tilde Z_0$. Let $\rho_1:=\rho\backslash\gamma$ and $\rho'_1:=w( \rho')\backslash\gamma$, then $\rho_1\bar\rho'_1\subset \tilde Z_0$ is a path from $\tilde a$ to $w(\tilde b)$. Recall $w\in \fix(\beta_a)$ and $\tilde b\in\fix\tilde\beta$, then $\tilde\beta(w(\tilde b))=\beta_a(w)(\tilde\beta(\tilde b))=w(\tilde b)$ is a fixed point of $\tilde\beta$. It implies that $\rho_1\bar\rho'_1\subset \tilde Z_0$ is a Nielsen path from $\tilde a$ to $w(\tilde b)$, contradicting that $a\in\F'_1$ and $b\in \F'_2$ belong to distinct fixed point classes of $\beta_0$. So the claim is true.\\

Using the injection $\bar\iota$ in equation (\ref{equa. injction}), we can compute $a(\beta,\F')$ immediately. There are two cases:

(a) $p$ is the unique edge $e$ of $Z\backslash Z_0$ in Type 2 (recall that in this case $\delta(\F')=1$ because $e$ gets initially expanded
along itself on only one tip). By Lemma \ref{lem. RTT typer 2}, $\bar\iota$ is a bijection. So
$$a(\beta, \F')=a(\beta_0,\F'_1)+a(\beta_0,\F'_2)=a(\beta_0,\F'_1)+a(\beta_0,\F'_2)+\delta(\F')-1.$$

(b) $p$ is the unique indivisible Nielsen path in Type 3. Follows from Lemma \ref{key lemma: expending edges are infinite words}, each element of $\mathscr A(\beta_a)\backslash\bar\iota(\mathscr A(\beta_0)_a\sqcup \mathscr A(\beta_0)_b)$ can be represented by $\mathscr W_e$ for a unique edge $e\in \Delta(\F')$, except that $e_1, e_2\in \Delta(\F')$ are the two tips of the indivisible Nielsen path $p$ and they give the same element $\mathscr W_{e_1}=\mathscr W_{e_2}$, and conversely. Therefore, we also have
$$a(\beta, \F')=a(\beta_0,\F'_1)+a(\beta_0,\F'_2)+\delta(\F')-1.$$\\

Case (iii) For the  equalities of $\F\neq\F'_0$, the situation is the same as the one in Case (i).

Suppose the path $p$ goes from $a$ to $b$ both in $\F'_0$.
By the similar argument as in case (ii), we can show $\eta$
$$\eta: \fix(\beta_0)_a\ast J\to \fix(\beta_a), ~~\mathrm{defined ~by} ~[u]\mapsto [u], ~~[t]\mapsto [t] $$
is an isomorphism, where the loop class $[u]\in\fix(\beta_0)_a$ and $J$ is the infinite cyclic group generated by the loop class $[t]$ represented by the loop
$t:=pq$ when $a\neq b$, but taking $t:=p$ when $a=b$. It implies $\rk(\beta,\F'_0)=\rk(\beta_0,\F'_0)+1$.

By equation ($\ref{inclusion induces injection}$), $\bar\iota_a: \mathscr A(\beta_0)_a\hookrightarrow \mathscr A(\beta_a)$ is an injection.
Then by the similar argument as the one in the above Case (a) and Case (b), we have $a(\beta, \F'_0)=a(\beta_0,\F'_0)+\delta(\F'_0)-1.$
\end{proof}

As a corollary of the above Proposition \ref{key prop. on ichr}, we have

\begin{cor}\label{Cor. ichr for RTT}
Let $\beta: (Z,Z_0)\to (Z,Z_0)$ be an RTT map as in Theorem \ref{Thm BH} and $\beta_0:=\beta|_{Z_0}: Z_0\to Z_0$. Then for every nonempty fixed point class $\F$ of $\beta$, we have
$$\ichr(\beta, \F)=\ichr(\beta_0, \F)-\delta(\F),$$
where $\ichr(\beta_0, \F):=\sum_{i=1}^k\ichr(\beta_0, \F_i)$ if $\F=\sqcup \F_i$ is a union of $k\leq 2$ $\beta_0$-fixed point classes $\F_i, i=1,\ldots,k$.
\end{cor}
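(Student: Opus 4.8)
The plan is to treat this corollary as a purely formal consequence of Proposition~\ref{key prop. on ichr}, since all the geometric content has already been extracted there. The only tool needed is the defining identity $\ichr(\beta,\F)=1-\rk(\beta,\F)-a(\beta,\F)$ together with the stipulated additive convention $\ichr(\beta_0,\F)=\sum_{i=1}^{k}\ichr(\beta_0,\F_i)$. First I would substitute the formulas of Proposition~\ref{key prop. on ichr} into this definition in each of the three cases, and check that the various $\pm 1$ corrections either cancel or are exactly absorbed, leaving the uniform answer $-\delta(\F)$.

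Concretely, in Case (i) the class $\F$ is a single $\beta_0$-class with $\rk(\beta,\F)=\rk(\beta_0,\F)$ and $a(\beta,\F)=a(\beta_0,\F)+\delta(\F)$, so
$$\ichr(\beta,\F)=1-\rk(\beta_0,\F)-a(\beta_0,\F)-\delta(\F)=\ichr(\beta_0,\F)-\delta(\F).$$
In Case (iii), for the distinguished class $\F'_0$ the rank goes up by $1$ while $a$ goes down by $1$ (apart from $\delta$), and these two corrections cancel inside $1-\rk-a$, again yielding $\ichr(\beta,\F'_0)=\ichr(\beta_0,\F'_0)-\delta(\F'_0)$; the remaining classes are handled exactly as in Case (i). Case (ii) is the one where the $k=2$ convention is genuinely used: with $\F'=\F'_1\cup\F'_2$ one has $\rk(\beta,\F')=\rk(\beta_0,\F'_1)+\rk(\beta_0,\F'_2)$ and $a(\beta,\F')=a(\beta_0,\F'_1)+a(\beta_0,\F'_2)+\delta(\F')-1$, and unwinding the definition gives
$$\ichr(\beta,\F')=\bigl(1-\rk(\beta_0,\F'_1)-a(\beta_0,\F'_1)\bigr)+\bigl(1-\rk(\beta_0,\F'_2)-a(\beta_0,\F'_2)\bigr)-\delta(\F'),$$
which is exactly $\ichr(\beta_0,\F'_1)+\ichr(\beta_0,\F'_2)-\delta(\F')=\ichr(\beta_0,\F')-\delta(\F')$.

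There is no real obstacle here: the computation is elementary arithmetic once Proposition~\ref{key prop. on ichr} is in hand. The only point requiring care is the bookkeeping of the constant term. The single ``$1$'' in the definition of $\ichr(\beta,\F')$ combines with the ``$-1$'' appearing in the formula for $a(\beta,\F')$ to produce the ``$2$'' demanded by the two summands of $\ichr(\beta_0,\F')$; this is precisely the reason the corollary is phrased with the convention $\ichr(\beta_0,\F):=\sum_i\ichr(\beta_0,\F_i)$, and I would flag this cancellation explicitly so that the reader sees why merging two fixed point classes does not disturb the clean relation $\ichr(\beta,\F)=\ichr(\beta_0,\F)-\delta(\F)$.
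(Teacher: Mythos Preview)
Your proposal is correct and is precisely the approach the paper takes: its proof reads in full ``Since $\ichr(\F)=1-\rk(\F)-a(\F)$, the conclusion can be checked immediately case by case in Proposition~\ref{key prop. on ichr}.'' You have simply written out that case-by-case arithmetic explicitly, including the bookkeeping of the constant term in Case~(ii), so there is nothing to add.
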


\begin{proof}
Since $\ichr(\F)=1-\rk(\F)-a(\F)$, the conclusion can be checked immediately case by case in Proposition \ref{key prop. on ichr}.
\end{proof}

\section{Proofs of Theorems}\label{sect. proof of thms}

Now we prove main theorems of this paper.

\begin{proof}[\textbf{Proof of Theorem \ref{main thm 1}}]
Let $f$ be a $\pi_1$-injective selfmap of a connected finite graph $X$, and $\F$ a fixed point class of $f$. Since $\ind(\F)$ and $\ichr(\F)$ both have homotopy and commutation invariance (see Section \ref{subsect. invariance}), it suffices to prove Theorem \ref{main thm 1} for an RTT map $\beta: (Z,Z_0)\to (Z,Z_0)$ in Theorem \ref{Thm BH}, where $Z$ is a connected finite graph without vertices of valence $1$ and with $\chi(Z)=\chi(X)$. Assume $\chi(Z)\leq 0$ because it is trivial for $\chi(Z)>0$.

If $\chi(Z)=0$, i.e., $Z$ is a circle, then by Example~\ref{exam: circle}, we have $\ind(\F)=\ichr(\F)$ for every fixed point class $\F$ of $\beta$. So Theorem \ref{main thm 1} holds.

If $\chi(Z)<0$ and $\F$ is empty, then $\ind(\F)=0$ while $0\leq \ichr(\F)\leq 1$ by Proposition \ref{empty fpc has ichr 1}, and hence Theorem \ref{main thm 1} holds. Now we assume $\F$ is a nonempty fixed point class of $\beta$ in the following. Let $Z_1,\ldots, Z_n$ be the connected components of $Z_0$. Suppose the $\beta$-invariant
ones are $Z_1,\ldots, Z_k$. Denote $\beta_0:=\beta|_{Z_0}: Z_0\to Z_0$ and $\beta_i:=\beta|_{Z_i}: Z_i\to Z_i$ for $i=1,\ldots, k.$
Since $Z$ is a connected graph without vertices of valence $1$, and $Z_0$ is a proper
subgraph containing all the vertices of $Z$, it is easy to see that all $\chi(Z_i)> \chi(Z)$. So, working inductively, we may
assume that Theorem \ref{main thm 1} is true for every $\beta_i$, that is,
\begin{equation}\label{equa. inductive hypothesis}
\ind(\beta_i,\F_i)\leq\ichr(\beta_i,\F_i)
\end{equation}
for every fixed point class $\F_i$ of $\beta_i$.

For the nonempty fixed point class $\F$ of $\beta$, note that $\F$ is a union of finitely many (in fact at most 2 by Proposition \ref{key prop. on ichr}) $\beta_0$-fixed point classes $\F_i$, each is contained in a unique component $Z_i$ and is also a fixed point class of $\beta_i$, so $\ind(\beta_0,\F_i)=\ind(\beta_i,\F_i)$ and $\ichr(\beta_i,\F_i)=\ichr(\beta_0,\F_i)$. Therefore, by the inductive hypothesis (equation \ref{equa. inductive hypothesis}), we have
$$\ind(\beta_0,\F_i)\leq\ichr(\beta_0,\F_i).$$
Combine the above inequality, equation (\ref{equa. additivity of index}) with Corollary \ref{Cor. ichr for RTT}, Theorem \ref{main thm 1} holds.
\end{proof}

\begin{proof}[\textbf{Proof of Theorem \ref{main thm for figure 8}}]
Since $\ind(\F)$ and $\ichr(\F)$ both have homotopy invariance, it suffices to assume that $f$ is an RTT map $\beta: (Z,Z_0)\to (Z,Z_0)$ in Theorem \ref{Thm BH}, where $\chi(Z)=\chi(X)=-1$.

If $\F$ is inessential, then the conclusion is clear by Theorem \ref{main thm 1}.

Now we assume that $\F$ is essential, then it is a nonempty fixed point class of $\beta$. Using the same notations $Z_1,\ldots, Z_n$ and $\beta_0,\beta_1,\ldots, \beta_k$ as in the above proof of Theorem \ref{main thm 1}, we have $1\geq \chi(Z_i)> \chi(Z)=-1$, and $\F$ is a finite union of $\beta_0$-fixed point classes $\F_i$ which is contained in a unique component $Z_i$ and is also a fixed point class of $\beta_i$.

If $\chi(Z_i)=1$, then $Z_i$ is a vertex of $Z$, and hence every fixed point class $\F_i$ of $\beta_i:\Z_i\to Z_i$ consists of a single point with $\ind(\beta_i,\F_i)=\ichr(\beta_i,\F_i)=1$.
If $\chi(Z_i)=0$, after a homotopy, $Z_i$ is a circle. From Example~\ref{exam: circle}, we have
$\ind(\beta_i,\F_i)=\ichr(\beta_i,\F_i)$
for every fixed point class $\F_i$ of $\beta_i$. Therefore, we always have
$$\ind(\beta_0,\F_i)=\ind(\beta_i,\F_i)=\ichr(\beta_i,\F_i)=\ichr(\beta_0,\F_i).$$
Combine the above equality, equation (\ref{equa. additivity of index}) with Corollary \ref{Cor. ichr for RTT}, the conclusion $\ind(\F)=\ichr(\F)$ holds.
\end{proof}

\begin{rem}
From the above two proofs, it is clear that Conjecture \ref{conj} is true if and only if $\ind(\F)=\ichr(\F)=0$ holds for every empty fixed point class $\F$.
\end{rem}

\begin{proof}[\textbf{Proof of Theorem \ref{stronger version of main thm 2}}]
For an injective endomorphism $\phi$ of $F_n$, there exists a $\pi_1$-injective selfmap $f: (X, \ast)\to (X, \ast)$ of a connected finite graph $X$ with fundamental group $\pi_1(X,\ast)=F_n$ such that $\phi=f_{\ast}: \pi_1(X,\ast)\to \pi_1(X,\ast)$, $[c]\mapsto[f\comp c]$. Moreover,
for any $i_{[w]}\comp\phi\in \inn\phi$, we have
$$i_{[w]}\comp\phi=f_{w}: \pi_1(X,\ast)\to \pi_1(X,\ast), \quad [c]\mapsto[w(f\comp c)\bar w],$$
where $w$ is a loop based at $\ast$ and the loop class $[w]\in \pi_1(X,\ast)$.

Two endomorphisms $\psi'=i_{[w']}\comp\phi$ and $\psi=i_{[w]}\comp\phi$ in $\inn\phi$
are \emph{similar} (see Section \ref{subsect. similarity invariance} for a definition) if and only if there exists $[u]\in\pi_1(X, \ast)$ for $u$ a loop based at $\ast$ such that $\psi'=i_{[u]}\comp\psi\comp (i_{[u]})^{-1}$, that is,
\begin{equation}\label{equa. routes}
i_{[w']}\comp\phi=i_{[u]}\comp i_{[w]}\comp\phi\comp i_{[\bar u]}=i_{[uw(f\comp\bar u)]}\comp\phi.
\end{equation}
Since $\phi(F_n)\cong F_n$ and the centralizer of any nontrivial element in a free group $F_n$ of rank $n\geq 2$ is free cyclic, the above Equation (\ref{equa. routes}) holds if and only if
$[w']=[uw(f\comp \bar u)]$, that is, the $f$-routes $w'$ and $w$ are associated with the same fixed point class $\F$ (see Definition \ref{def. f-route class}). Therefore, we have a bijection $\F\mapsto [\psi]$ between the fixed point classes of $f$ and the similarity classes contained in $\inn\phi$, with
\begin{equation}\label{equa. ichr psi}
\ichr(\F)=1-\rk(\F)-a(\F)=1-\rk\fix(\psi)-a(\psi).
\end{equation}
By Corollary \ref{main cor}, we have
$$\sum_{[\psi]\subset\inn\phi}\max\{0,~~\rk\fix(\psi)+a(\psi)/2-1\}\leq n-1.$$
So Theorem \ref{stronger version of main thm 2} holds.
\end{proof}

\begin{proof}[\textbf{Proof of Theorem~\ref{main thm: existence of fixed words}}]
The conclusion follows from Example \ref{exam:rankone} immediately if the rank $n=1$. Now we assume $n\geq 2$. As in the above proof of Theorem \ref{stronger version of main thm 2}, there exists a $\pi_1$-injective selfmap $f: X\to X$ of a connected finite graph $X$ with fundamental group $\pi_1(X)=F_n$ such that $\phi=f_{\ast}: \pi_1(X,\ast)\to \pi_1(X,\ast)$, and the abelianization of $\phi$,
$$\phi^{\mathrm{ab}}=f_\sharp: H_1(X)\to H_1(X)=\Z^n,$$
the endomorphism induced by $f$ on the homology group $H_1(X)$. By the famous Lefschetz-Hopf fixed point theorem, the Lefschetz number
$$L(f)=1-\tr(\phi^{\mathrm{ab}})=\sum_{\F\in\fpc(f)}\ind(f,\F).$$

If $\tr(\phi^{\mathrm{ab}})<1$, then there exists $\F\in \fpc(f)$ such that $\ind(f,\F)>0$, and hence
$\ichr(f,\F)>0$ by Theorem \ref{main thm 1}. So by Equation (\ref{equa. ichr psi}), there exists $c\in F_n$ such that
$\rk\fix(i_c\comp \phi)=a(i_c\comp \phi)=0$.

If $n=2$ and $\tr(\phi^{\mathrm{ab}})>1$, then there exists $\F\in \fpc(f)$ with $\ind(f,\F)<0$, and hence
$\ichr(f,\F)<0$ by Theorem \ref{main thm for figure 8}. So by Equation (\ref{equa. ichr psi}), there exists $c\in F_n$ such that $\rk\fix(i_c\comp \phi)+a(i_c\comp \phi)>1$.
\end{proof}

\begin{proof}[\textbf{Proof of Theorem~\ref{main thm 2}}]
The conclusion follows from Example \ref{exam:rankone} immediately if the rank $n=1$. For $n\geq 2$, pick one of the summands in the inequality of Theorem~\ref{stronger version of main thm 2}, we have that $\max\{0,\rk\fix(\phi)+a(\phi)/2-1\}\leq n-1$. This is what we need to show.
\end{proof}

\section{Examples}\label{sect. examples}
The following example shows that the bounds in  Theorem \ref{main thm 2} and Theorem \ref{stronger version of main thm 2} are sharp.

\begin{exam}
Let $f: (R_n, \ast) \to(R_n, \ast)$ be a $\pi_1$-injective selfmap of the graph $R_n$ with one vertex $\ast$ and $n\geq 1$ edges $a_1,\ldots, a_n$, such that $f(a_i)=a^2_i$ for
$i=1,\ldots,n$.

Note that $f$ is an RTT map with $Z=R_n$ and $Z_0=\ast$. Then $\ast$ is the unique nonempty fixed point class of $f$, and $\Delta(\ast)=\{a_1,\ldots, a_n,\bar a_1,\ldots,\bar a_n\}$ of cardinality $\delta(\ast)=2n$. If we write the path class of the edge $a_i$ still by $a_i$, then the fundamental group $\pi_1(R_n, \ast)=\langle a_1,\ldots, a_n\rangle\cong F_n$, and $f_{\ast}: \pi_1(R_n, \ast)\to \pi_1(R_n, \ast)$ defined by $f_{\ast}(a_i)=a^2_i$. Note that $f_\ast$ is not an automorphism, with $\fix(f_{\ast})=\{1\}$ and $2n$ equivalence classes of attracting fixed points that are induced by the oriented edge $a_i, \bar a_i\in \Delta(\ast)$,
$$W_{a_i}=f_{\ast}^{\infty}(a_i)=a_ia_ia_i\cdots,\quad W_{\bar a_i}=f_{\ast}^{\infty}(a_i^{-1})=a^{-1}_ia_i^{-1}a_i^{-1}\cdots.$$
Therefore, $a(\ast)=a(f_{\ast})=2n$,
$\rk\fix(f_{\ast})+a(f_{\ast})/2=n$ and
$$\ind(\ast)=\ichr(\ast)=1-\rk\fix(f_{\ast})-a(f_{\ast})=1-2n.$$
\end{exam}

Next example shows that the inequality in Theorem \ref{main thm 2} can be strict.

\begin{exam}
Let $f: (R_2, \ast) \to(R_2, \ast)$ be a $\pi_1$-injective selfmap of the graph $R_2$ with one vertex $\ast$ and two edges $a_1, a_2$, such that $f(a_1)=a_1$ and
$f(a_2)=\bar a_2a_1a_2$.

Note that $f$ is an RTT map with $Z=R_2$ and $Z_0=a_1$. Then $\ast$ is a nonempty fixed point class of $f$, and $\Delta(\ast)=\{\bar a_2\}$. If we write the path class of the edge $a_i$ still by $a_i$, then the fundamental group $\pi_1(R_2, \ast)=\langle a_1, a_2\rangle\cong F_2$, with $f_{\ast}: \pi_1(R_2, \ast)\to \pi_1(R_2, \ast)$ defined by $f_{\ast}(a_1)=a_1$ and
$f_{\ast}(a_2)=a^{-1}_2a_1a_2$. Note that $f_\ast$ is not an automorphism, with $\fix(f_{\ast})=\langle a_1\rangle\cong \Z$ and the only equivalence class of attracting fixed points that is induced by the oriented edge $\bar a_2\in \Delta(\ast)$
$$W_{\bar a_2}=f_{\ast}^{\infty}(a_2^{-1})=a^{-1}_2a_1^{-1}a_2a_1^{-1}a^{-1}_2a_1a_2\cdots,$$
Therefore,
$\rk\fix(f_{\ast})+a(f_{\ast})/2=1+1/2<2$ and
$$\ind(\ast)=\ichr(\ast)=1-\rk\fix(f_{\ast})-a(f_{\ast})=-1.$$
\end{exam}

Finally, we give two examples that support Conjecture \ref{conj} for empty and nonempty fixed point classes with indices $0$.

\begin{exam}
Let $f: (R_2, \ast) \to(R_2, \ast)$ be a $\pi_1$-injective selfmap of the graph $R_2$ with one vertex $\ast$ and two edges $a, b$, such that $f(a)=b$ and $f(b)=\bar a$.

Below we will show that the fixed point class $\F_a$ associated to the $f$-route $a$ is empty with $\ind(\F_a)=\ichr(\F_a)$.

Indeed, fix a universal covering $q: \tilde R_2\to R_2$ with a given point $\tilde\ast\in q^{-1}(\ast)$, and a lifting $\tilde a: (I, 0, 1)\to (\tilde R_2, \tilde\ast,\tilde a(1))$ of the loop $a$. Then the lifting $\tilde f: \tilde R_2\to \tilde R_2$ defined by $\tilde f(\tilde \ast)=\tilde a(1)$ is a composition of a rotation and a translation, so it is an isometry of the tree $\tilde R_2$ equipped with the natural metric $d$ such that  each edge has length $1$. Note that $d(\tilde f(\tilde\ast), \tilde\ast)=d(\tilde a(1), \tilde\ast)=1$, so for any point $x\in \tilde R_2$,
$$d(\tilde f(x), \tilde \ast)=d(\tilde f(x), \tilde f(\tilde\ast))\pm d(\tilde f(\tilde\ast), \tilde\ast)=d(x, \tilde\ast)\pm 1.$$
It implies that $\tilde f(x)\neq x$ and hence $\fix \tilde f=\emptyset$, namely, the fixed point class $\F_a$ is empty.

On one hand, the $f$-route $a$ induces an injective endomorphism
$$f_a: \pi_1(R_2,\ast)\to \pi_1(R_2,\ast), \quad a\mapsto aba^{-1}, ~~b\mapsto a^{-1},$$
with $\fix(f_a)=\langle aba^{-1}b^{-1}\rangle\cong \Z$. On the other hand, consider $f_{\ast}:\pi_1(R_2,\ast)\to \pi_1(R_2,\ast)$ defined by $a\mapsto b, b\mapsto a^{-1}$. Then for any infinite word $W\in \partial\pi_1(R_2,\ast)$ with initial segment $|W_i|=i$, the word length $|f_{\ast}(W_i)|=i$ , and $$|f_a(W_i)|=|af_{\ast}(W_i)a^{-1}|\leq i+2.$$
It follows that $f_a$ has no attracting fixed words. Therefore, $a(f_a)=0$.

By the arguments above, for the empty fixed point class $\F_a$ defined by the $f$-route $a$, we have
$$\ind(\F_a)=\ichr(\F_a)=1-\rk\fix(f_a)-a(f_a)=0.$$
\end{exam}

\begin{exam}[\cite{Jiang1984}]
Let $f: (R_2, \ast) \to(R_2, \ast)$ be a $\pi_1$-injective selfmap of the graph $R_2$ with one vertex $\ast$ and two edges $a, b$, such that the induced endomorphism of $\pi_1(R_2, \ast)$ is given by  $f_*(a)=a^{-1}$ and $f_*(b)=a^{-1}b^2$.

It is known from \cite{Jiang1984} that $f$ has two nonempty fixed point classes and both of them have indices zero and hence both are inessential.

Note that $\fix(f_{*})=\{1\}$ and $f_{*}$ has an attracting fixed word
$$f_{*}^{\infty}(b^{-1})=b^{-2}ab^{-4}ab^{-2}a\cdots.$$
Hence, $a(f_\ast)=1$ and the nonempty fixed point class consisting of $\{\ast\}$ has
$$\ind(\ast)=\ichr(\ast)=1-\rk\fix(f_*)-a(f_*)=0.$$
\end{exam}


\end{document}